\theoremstyle{thmstyletwo}%
\newtheorem{example}{Example}[section]
\newtheorem{remark}{Remark}%
\newtheorem{corollary}{Corollary}[section]
\newtheorem{proposition}{Proposition}
\newtheorem{theorem}{Theorem}
\newtheorem{assumption}{Assumption}
\theoremstyle{thmstylethree}%
\newtheorem{definition}{Definition}%
\DeclareRobustCommand{\RR}{\mathbb R}
\DeclareRobustCommand{\setS}{\mathbb S}
\DeclareRobustCommand{\UU}{\mathbf U}
\DeclareRobustCommand{\CC}{\mathbf C}
\newtheorem{lemma}{Lemma}[section]
\newcommand{\interior}{{\mbox{\rm int}}}
\newcommand{\dist}{{\mbox{\rm dist}}}
\begin{document}

\title{ An Augmented Lagrangian Approach to Conically Constrained Non-monotone Variational Inequality Problems }


\author{
Lei Zhao\thanks {Institute of Translational Medicine and National Center for Translational Medicine, Shanghai
Jiao Tong University, 200030 Shanghai, China ({\tt l.zhao@sjtu.edu.cn}).}
\and
Daoli Zhu\thanks {Antai College of Economics and Management and Sino-US Global Logistics
Institute, Shanghai Jiao Tong University, 200030 Shanghai, China
({\tt dlzhu@sjtu.edu.cn}); and School of Data Science, The Chinese University of Hong Kong, Shenzhen, Shenzhen 518172, China}
\and Shuzhong Zhang\thanks {Department of Industrial and Systems Engineering, University of Minnesota, Minneapolis, MN 55455, USA
({\tt zhangs@umn.edu})}}

\maketitle

\begin{abstract}
In this paper we consider a non-monotone (mixed) variational inequality model with (nonlinear) convex conic constraints.
Through developing an equivalent Lagrangian function-like primal-dual saddle-point system for the VI model in question, we introduce an
augmented Lagrangian primal-dual method, to be called ALAVI in the current paper, for solving a general constrained VI model. Under an assumption, to be called the primal-dual variational coherence condition in the paper, we prove the convergence of ALAVI. Next, we show that many existing generalized monotonicity properties are sufficient---though by no means necessary---to imply the above mentioned coherence condition, thus are sufficient to ensure convergence of ALAVI. Under that assumption, we further show that ALAVI has in fact an $o(1/\sqrt{k})$ global rate of convergence where $k$ is the iteration count. By introducing a new gap function, this rate further improves to be $O(1/k)$ if the mapping is monotone. 
Finally, we show that under a metric subregularity condition, even if the VI model may be non-monotone the local convergence rate of ALAVI improves to be linear. Numerical experiments on some randomly generated highly nonlinear and non-monotone VI problems show practical efficacy of the newly proposed method. 


\end{abstract}

\vspace{1cm}

{\bf Keywords:} constrained variational inequality problem, non-monotonicity, augmented Lagrangian function,
metric subregularity, iteration complexity analysis.

{\bf MSC Classification:} 90C33, 65K15, 90C46, 49K35, 90C26.

\section{Introduction}\label{intro}

The ability to solve variational inequality (VI) problems resides at the core of mathematical programming, minimax saddle point models, equilibrium and complementarity problems, computational games, and economics theory. In terms of algorithm design for (VI), much of the attention has been focused on the case where the underlying mapping is {\it monotone}. This paper aims to present a new algorithm for {\it non-monotone}\/ and {\it constrained}\/ variational inequality problems in a general form. Specifically, let us consider a continuous mapping $G$ from $\RR^n$ to $\RR^n$; $J$ is a continuous function on $\mathbf{U}\subseteq \RR^{n}$ possibly nonsmooth; $\UU$ is a closed convex subset of $\RR^n$; $\Theta(x)$ is a continuous mapping from $\RR^n$ to $\RR^m$ and $\mathbf{C}$ is a nonempty closed convex cone in $\RR^{m}$.
With the above setting in mind, in this paper we consider the following generic
mixed variational inequality problem with nonlinear conic constraint:
\begin{equation}\label{Prob:VIP}
\begin{aligned}
\mbox{{\rm(VIP)}}\qquad&\mbox{Find $u^*\in\Gamma$ such that}\\
&\langle G(u^*),u-u^*\rangle+J(u)-J(u^*)\geq0,\quad\forall u\in\Gamma,\\
&\mbox{ where } \Gamma=\left\{u\in\UU\mid\Theta(u)\in-\mathbf{C}\right\}.
\end{aligned}
\end{equation}
Note that the above model also relates to the so-called hemi-variational inequality problem (cf.~\cite{FPSL14,P93}).
Remark that any convex constraints can be reformulated via homogenization into the above form; see e.g.~\cite{Z04}. Also, $\Theta$ is not necessarily affine either; for instance, negative matrix logarithmic function $\Theta$ and a PSD matrix cone $\mathbf{C}$ would also be a valid example of (VIP) defined by~\eqref{Prob:VIP}. As we will discuss shortly, the regularization function $J$ is assumed to be convex though not necessarily smooth. In this paper we focus on the case where $G$ is non-monotone. Therefore, the above model (VIP) is essentially a non-monotone VI problem. Let us denote $\setS_{\mbox{\tiny VIP}}$ to be the set of all solutions to (VIP). We assume $\setS_{\mbox{\tiny VIP}}\not=\emptyset$ throughout this paper.

It is well known that constrained variational inequality formulation unifies many problem types in mathematical programming such as constrained optimization, constrained min-max problem (cf.~\cite{ZWHZ21}), and many more.
An excellent reference on the theory and solutions methods for finite-dimensional variational inequality problems can be found in Facchinei and Pang~\cite{FP07}.
Though much of the theory and solution methods for VI have been mainly focussing on monotone VI problems, there are studies on non-monotone VI's as well. Among those,
Zhu and Marcotte (section 6 of~\cite{Zhu96}) proposed a Uzawa-type decomposition method for (VIP) with explicit constraints. However, the strong monotonicity of $G$ is required to guarantee convergence in their approach. In order to relax the strong monotonicity of $G$ to co-coercivity, Zhu~\cite{Zhu03} proposed an augmented Lagrangian decomposition
method for (VIP).  
The current paper continues that line of investigations.

We shall also remark that the resurgent interest on non-monotone VI's has been partly motivated by nonconvex-nonconcave min-max saddle-point models, which is relatable to, e.g., deep learning neural networks; see a recent survey by Razaviyayn {\em et al.}~\cite{RLNHSH20}.
Regarding VI models without co-coercivity,
recently Malitsky and Tam~\cite{FBS} and Malitsky~\cite{Golden} developed an approach for 
monotone inclusion problem and VI without explicit constraints.
Kanzow and Steck~\cite{ALM1} considered (VIP) (with $J=0$) in Banach spaces. To solve (VIP), they proposed a two-loop type algorithm, where the inner loop consists of an inexact zero-finding process for a nonlinear system generated from the augmented Lagrangian. We shall remark here that their inner loop process does not preserve separability for large scale problems, even if the initial problem is separable.
For the interior point approach to VI, Goffin {\it et al.}~\cite{GMZ97} and Marcotte and Zhu~\cite{MZ01} developed the analytic center cutting plane framework to solve (VIP) where $\Gamma$ is a polyhedron and $G$ is pseudo-monotone$^+$ or quasi-monotone. Recently, Yang {\em et al.}~\cite{YJ221} and Chavdarova {\it et al.}~\cite{YJ222} developed an interior point approach to solve monotone VI with nonlinear inequality and linear equality constraints, and they established an iteration complexity analysis. Very recently, Lin and Jordan~\cite{LJ22} developed an iteration complexity analysis for an algorithm (which they termed Perseus) using the high-order derivative information to solve non-monotone VI, assuming the so-called Minty solutions exist. Though our approaches are entirely different, in this paper we also base our method on the notion of primal-dual Minty solutions for a reformulated hemi-VI system. In particular, we reformulate (VIP) into a {\em mixed}\/ Minty-type system, and then design a primal-dual iterative algorithm (based on an augmented Lagrangian) to search for its solution.
Our approach is an extension of our recent work (\cite{ZZZ22}) for solving non-convex optimization and an early work on the co-coercive (VIP)~\cite{Zhu03}; the latter two methods are based on the augmented Lagrangian approach which 
ensures 
strong global convergence with simple primal-dual updates.
In summary, the main contributions of this paper are as follows:
\begin{description}
\item[(i)] We develop a Lagrangian function based primal-dual system for (VIP) and design an
augmented Lagrangian
method, which we shall call ALAVI, for solving (VIP).

\item[(ii)] We prove that the global convergence of ALAVI under what we call the {\it primal-dual variational coherence condition}\/ 
for (VIP).

\item[(iii)] We introduce a suite of generalized monotonicity conditions and show that they are all sufficient conditions for the above primal-dual variational coherence condition to hold.

\item[(iv)] Under the primal-dual variational coherence condition, we introduce a KKT stationary measure for (VIP), by which we show ALAVI to have an $o(1/\sqrt{k})$ iteration complexity (in the non-ergodic sense).

\item[(v)]  In the case $G$ is monotone, we introduce a new gap function for (VIP) and show that ALAVI has an $O(1/k)$ iteration complexity in the ergodic sense.

\item[(vi)] Finally, if the KKT mapping of (VIP) satisfies a metric subregularity condition while $G$ is in general non-monotone, then we further establish an R-linear rate of ALAVI for (VIP).
\end{description}

The remainder of this paper is organized in the following way. In Section~\ref{sec:pre}, we provide some preliminaries that are necessary for our analysis. Specifically, Subsection~\ref{sec:Lagrangian} is devoted to a Lagrangian function theory for (VIP). In Subsection~\ref{subsec:minty}, we introduce the (mixed) Minty variational inequality system and the primal-dual variational coherence condition for (VIP). Next, in Section~\ref{sec:monotone}, we introduce a suite of generalized monotonicity conditions on the mapping (and the objective) and prove that they are sufficient for the primal-dual variational coherence condition to hold.  In Section~\ref{sec:ALAVI_convergence}, we present a new method called
Augmented Lagrangian Approach to Variational Inequalities 
(abbreviated as ALAVI henceforth) to solve (VIP), and provide an analysis for the global convergence of ALAVI for (VIP) under the primal-dual variational coherence condition. In Section~\ref{sec:rate}, we introduce a KKT stationary measure for (VIP) and provide an iteration complexity analysis for ALAVI based on that notion. Finally, in Section~\ref{sec:linear} an R-linear convergence rate under the metric subregularity is shown to hold for ALAVI.

\section{Preliminaries}\label{sec:pre}
To pave the ground for our discussion, let us start with the notations and assumptions.
\begin{definition} [Conic convexity of mapping $\Theta(u)$] Let $\mathbf{C}$ be a closed convex cone. The mapping $\Theta(u)$ is called $\mathbf{C}$-convex if
\begin{equation*}
\forall u,v\in\mathbf{U},\quad\forall\alpha\in [0,1],\quad\Theta(\alpha u+(1-\alpha)v)-\alpha\Theta(u)-(1-\alpha)\Theta(v)\in - \mathbf{C}.
\end{equation*}
\end{definition}

As examples, nonlinear matrix functions $-X^{1/2}$ and $-\ln X$ are ${\cal S}^{n\times n}_+$-convex; see e.g.~\cite{HJ91}.

Throughout this paper, we make the following standard assumptions on (VIP):
\begin{assumption}
\label{assumpA} {\ }

\noindent{\rm(H$_1$)} The solution set $\setS_{\mbox{\tiny VIP}}$ of (VIP) is nonempty.

\noindent{\rm(H$_2$)} $G$ is $L$-Lipschitz on $\UU$.

\noindent{\rm(H$_3$)} $J$ is a proper convex low-semicontinuous (l.s.c.) function (not necessarily differentiable).

\noindent{\rm(H$_4$)} $\Theta$ is $\mathbf{C}$-convex $\RR^n\rightarrow\RR^m$.

\noindent{\rm(H$_5$)} $\Theta(u)$ is Lipschitz with constant $\tau$ on an open subset $\mathcal{O}$ containing $\mathbf{U}$, where
\begin{equation*}\label{Theta_Lipschitz}
\|\Theta(u)-\Theta(v)\|\leq\tau\|u-v\|, \,\,\, \forall u,v\in\mathcal{O}.
\end{equation*}

\noindent{\rm(H$_6$)} The following constraint qualification condition holds:
If $\interior (\mathbf{C}) \not= \emptyset$, then
\begin{equation}\label{Theta_CQC_neq}
\Theta(\mathbf{U})\cap \interior (\mathbf{C}) 
\neq\emptyset;
\end{equation}
if $\mathbf{C}=\{0\}$, then
$0\in \interior (\Theta(\mathbf{U}))$.
\end{assumption}


\subsection{ (VIP) and its primal-dual Lagrangian variational systems }\label{sec:Lagrangian}
It is well-known that if
$u^*$ is a solution of (VIP) then $u^*$ is the solution of the following convex optimization problem
\begin{equation}\label{Prob:OP}
\begin{array}{lcl}
\mbox{OPT($u^*$)}  &\min\limits_{u\in \mathbf{U}}       & \langle G(u^*),u\rangle+J(u) \\
                    &\rm {s.t}  & \Theta(u)\in -\mathbf{C},
\end{array}
\end{equation}
and {\em vice versa}. The standard Lagrangian function for $\mbox{\rm OPT}(u^*)$ is
\begin{equation}\label{func:L_VIP}
L_{u^*}(u,p):=\langle G(u^*),u\rangle+J(u)+\langle p,\Theta(u)\rangle,
\end{equation}
where $p$ is an optimal Lagrangian multiplier associated with $\mbox{OPT}(u^*)$. By the standard saddle point optimality condition (cf.~\cite{Zhu03}) we know that $(\hat{u},\hat{p})\in\mathbf{U}\times\mathbf{C}^*$ is a saddle point of $L_{u^*}$ if the following inequalities hold
\begin{equation}\label{eq:L}
L_{u^*}(\hat{u},p)\leq L_{u^*}(\hat{u},\hat{p})\leq L_{u^*}(u,\hat{p}),\quad\forall (u,p)\in\mathbf{U}\times\mathbf{C}^*.
\end{equation}
Following~\cite{Zhu03}, we introduce an augmented Lagrangian function for $\mbox{\rm OPT}(u^*)$:
\begin{equation}\label{eq:AL}
L_{u^*}^{\gamma}(u,p) := \langle G(u^*),u\rangle+J(u)+\varphi(\Theta(u),p)
\end{equation}
with
\begin{equation}\label{eq:PHI}
\varphi (\theta ,p) := \max_{q\in\mathbf{C}^*} \,\, \langle q, \theta \rangle-\|q-p\|^2/2\gamma,
\end{equation}
where $\gamma>0$ is a fixed parameter. The functions $\varphi (\theta , p)$ and $L_{u^*}^{\gamma}(u, p)$ have some
useful properties as stated in the following two propositions, whose proofs can be found in~\cite{CohenZ}.
\begin{proposition}[Properties of $\varphi$]\label{THM1_VIP}
Consider the function $\varphi (\theta , p)$ as defined by \eqref{eq:PHI}.
Then,
\begin{itemize}
\item [{\rm (i)}] $\varphi$ is convex in $\theta$ and concave in $p$;
\item [{\rm (ii)}] $\varphi$ is differentiable in $\theta$ and $p$, and one
has
\begin{eqnarray}
&&\nabla_\theta \varphi(\theta , p)=\Pi (p+\gamma\theta ),\label{PI1_VIP}\\
&&\nabla_p \varphi (\theta , p)=[\Pi (p+\gamma\theta )-p]/\gamma, \label{PI2_VIP}\\
&& \varphi (\theta , p)=\frac{1}{2\gamma}\left[ \|\Pi (p+\gamma\theta )\|^2-\|p\|^2\right], 
\label{PI3_VIP}
\end{eqnarray}
where the operator $\Pi(x)$ stands for the projection of $x$ onto $\mathbf{C}^*$;
\item [{\rm (iii)}] For any $p\in\mathbf{C}^*$, the function $\langle p,\Theta(u)\rangle$ and function $\varphi(\Theta(u),p)$ are continuous and convex in $u\in\mathbf{U}$.
\end{itemize}
\end{proposition}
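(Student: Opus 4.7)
My plan is to prove the three claims in a slightly permuted order, since the closed-form expression asserted in (ii) is the single tool that makes every other claim transparent. I would start by evaluating the strongly concave maximization defining $\varphi$: completing the square rewrites the integrand as $-\frac{1}{2\gamma}\|q-(p+\gamma\theta)\|^2+\frac{1}{2\gamma}(\|p+\gamma\theta\|^2-\|p\|^2)$, so the maximizer over $q\in\mathbf{C}^*$ is $q^{*}=\Pi(p+\gamma\theta)$. Substituting back and invoking the Moreau decomposition $\|x\|^2=\|\Pi(x)\|^2+\dist(x,\mathbf{C}^*)^2$ to cancel the residual distance term yields formula \eqref{PI3_VIP}: $\varphi(\theta,p)=\frac{1}{2\gamma}(\|\Pi(p+\gamma\theta)\|^2-\|p\|^2)$.

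With this closed form in hand, the differentiability in (ii) follows from the standard fact that $x\mapsto\frac{1}{2}\|\Pi(x)\|^2$ is convex and continuously differentiable on $\RR^m$ with gradient $\Pi(x)$; this is the Moreau envelope of the indicator of $\mathbf{C}^*$ and can also be recovered from Danskin's theorem applied to the smooth strongly concave maximization above. The chain rule then gives $\nabla_\theta\varphi(\theta,p)=\Pi(p+\gamma\theta)$ and $\nabla_p\varphi(\theta,p)=(\Pi(p+\gamma\theta)-p)/\gamma$, matching \eqref{PI1_VIP}--\eqref{PI2_VIP}.

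For (i), convexity in $\theta$ is immediate from the original max-definition: for each fixed $(p,q)$, the integrand is affine in $\theta$, and a supremum of affine functions is convex. Concavity in $p$ is the subtle point, since a supremum of concave functions need not be concave. To get around this, I would rewrite the closed form via Moreau once more as
\[
\varphi(\theta,p) \;=\; \langle p,\theta\rangle + \frac{\gamma}{2}\|\theta\|^2 - \frac{1}{2\gamma}\dist(p+\gamma\theta,\mathbf{C}^*)^2 .
\]
The squared distance to a closed convex set is convex in its argument, so the last term is concave in $p$, while the first is linear in $p$ and the middle is independent of $p$; hence $\varphi$ is concave in $p$.

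Finally, for (iii) I would combine $\mathbf{C}$-convexity of $\Theta$ with $p\in\mathbf{C}^*$: the inclusion $\alpha\Theta(u)+(1-\alpha)\Theta(v)-\Theta(\alpha u+(1-\alpha)v)\in\mathbf{C}$ paired with $p\in\mathbf{C}^*$ produces the convexity inequality for $u\mapsto\langle p,\Theta(u)\rangle$. For $u\mapsto\varphi(\Theta(u),p)$, I would return to the max-definition, so that $\varphi(\Theta(u),p)=\sup_{q\in\mathbf{C}^*}\langle q,\Theta(u)\rangle-\|q-p\|^2/(2\gamma)$ becomes a supremum over $q\in\mathbf{C}^*$ of functions that are convex in $u$ by the previous step, hence convex; continuity of both compositions follows from continuity of $\Theta$ (assumption (H$_5$)) and of the projection. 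The one genuine obstacle in the whole argument is the concavity in $p$ in (i), which cannot be read off the supremum definition and requires the Moreau-distance rewriting of the closed form.
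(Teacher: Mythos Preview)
Your argument is correct. The paper itself does not supply a proof of this proposition at all; it simply refers the reader to Cohen and Zhu~\cite{CohenZ}. So your self-contained derivation---completing the square to identify the maximizer $q^*=\Pi(p+\gamma\theta)$, using the Moreau decomposition $\|x\|^2=\|\Pi(x)\|^2+\dist(x,\mathbf{C}^*)^2$ (valid because $\mathbf{C}^*$ is a closed convex cone) to obtain the closed form, and then reading off differentiability and the concavity-in-$p$ via the rewriting $\varphi(\theta,p)=\langle p,\theta\rangle+\tfrac{\gamma}{2}\|\theta\|^2-\tfrac{1}{2\gamma}\dist(p+\gamma\theta,\mathbf{C}^*)^2$---actually supplies what the paper omits.

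One small inaccuracy worth tightening: $\tfrac{1}{2}\|\Pi_{\mathbf{C}^*}(x)\|^2$ is not the Moreau envelope of $\iota_{\mathbf{C}^*}$ (that would be $\tfrac{1}{2}\dist(x,\mathbf{C}^*)^2$); it is the Moreau envelope of $\iota_{-\mathbf{C}}$, or equivalently $\tfrac{1}{2}\|x\|^2$ minus the envelope of $\iota_{\mathbf{C}^*}$. Either description still gives $C^1$-smoothness with gradient $\Pi_{\mathbf{C}^*}(x)$, and in any case your Danskin alternative covers this cleanly since the inner maximization is strongly concave with a unique maximizer. The rest of the proof---convexity in $\theta$ from the sup-of-affine representation, and part~(iii) from $\mathbf{C}$-convexity of $\Theta$ combined with $q\in\mathbf{C}^*$ inside the supremum---is exactly right.
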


\begin{proposition}[Relationship between $L_{u^*}$ and $L_{u^*}^{\gamma}$]\label{THM2_VIP}
Consider the Lagrangian function $L_{u^*}(u,p)$ and the augmented Lagrangian function $L_{u^*}^{\gamma}(u, p)$ defined according to~\eqref{eq:L} and~\eqref{eq:AL} respectively. Suppose Assumption~\ref{assumpA} holds.  Then,
$L_{u^*}$ and $L_{u^*}^{\gamma}$ have the same sets of saddle points $\hat\UU\times\hat{\mathbf{P}}$ on $\UU\times\mathbf{C}^*$ and $\UU\times\RR^m$, respectively.
Specifically, for any solution $u^*$ of (VIP), there is $p^*\in\mathbf{C}$ such that $(u^*,p^*)$ is a saddle point of $\mbox{\rm OPT}(u^*)$, satisfying
\begin{eqnarray}\label{eq:L_saddlepoint}
L_{u^*}(u^{*},p)\leq L_{u^*}(u^{*},p^*)\leq L_{u^*}(u, p^* ),\quad\forall u\in\UU,\; \forall p\in\mathbf{C}^*,
\end{eqnarray}
and
\begin{eqnarray}\label{eq:L_gamma_saddlepoint}
L_{u^*}^{\gamma}(u^{*} ,p)\leq L_{u^*}^{\gamma}(u^{*} ,p^* )\leq L_{u^*}^{\gamma}(u, p^* ),\quad\forall u\in\UU,\; \forall p\in\mathbf{C}^*.
\end{eqnarray}
\end{proposition}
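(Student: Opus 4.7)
The plan is to prove the two directions of set equality between saddle points of $L_{u^*}$ and $L_{u^*}^{\gamma}$, with the existence of a saddle pair $(u^*,p^*)$ coming as a by-product of the forward direction. The backbone of the argument is a single pointwise inequality between $\varphi(\theta,p)$ and $\langle p,\theta\rangle$, combined with the differential formulas of Proposition~\ref{THM1_VIP}.

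First, I would record the following key lemma, derived directly from \eqref{eq:PHI} by choosing $q=p$: for every $p\in\mathbf{C}^*$ and every $\theta\in\RR^m$,
\begin{equation*}
\varphi(\theta,p)\ge\langle p,\theta\rangle,
\end{equation*}
with equality if and only if the KKT triple conditions
$p\in\mathbf{C}^*$, $\theta\in-\mathbf{C}$, $\langle p,\theta\rangle=0$
hold; the equality case is obtained by noting that the maximizer in \eqref{eq:PHI} equals $p$ precisely when $\Pi(p+\gamma\theta)=p$, which by the projection characterization onto the closed convex cone $\mathbf{C}^*$ is equivalent to the three KKT conditions. Observe that the same three conditions are equivalent to $\nabla_p\varphi(\theta,p)=0$ via \eqref{PI2_VIP}.

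Next, to prove existence of $(u^*,p^*)$ and the forward inclusion of saddle points, I would note that since $u^*$ solves (VIP), it is a minimizer of the convex problem $\mbox{OPT}(u^*)$ (convexity coming from Assumption~\ref{assumpA}(H$_3$)--(H$_4$)). Under the Slater-type constraint qualification (H$_6$), standard convex Lagrangian duality produces $p^*\in\mathbf{C}^*$ satisfying the KKT conditions, which directly yield \eqref{eq:L_saddlepoint}. To promote $(u^*,p^*)$ into a saddle point of $L_{u^*}^{\gamma}$ on $\UU\times\RR^m$, I would use the lemma above: complementary slackness gives $\varphi(\Theta(u^*),p^*)=\langle p^*,\Theta(u^*)\rangle=0$, so $L_{u^*}^{\gamma}(u^*,p^*)=L_{u^*}(u^*,p^*)$; the primal inequality of \eqref{eq:L_gamma_saddlepoint} then follows from the chain $L_{u^*}^{\gamma}(u,p^*)\ge L_{u^*}(u,p^*)\ge L_{u^*}(u^*,p^*)=L_{u^*}^{\gamma}(u^*,p^*)$, while the dual inequality over all of $\RR^m$ follows from concavity of $\varphi$ in $p$ (Proposition~\ref{THM1_VIP}(i)) together with the stationarity $\nabla_p\varphi(\Theta(u^*),p^*)=0$ secured by complementary slackness.

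For the reverse direction---that any saddle point $(\hat u,\hat p)$ of $L_{u^*}^{\gamma}$ on $\UU\times\RR^m$ is also a saddle of $L_{u^*}$ on $\UU\times\mathbf{C}^*$---I would first exploit that $\hat p$ is an \emph{unconstrained} maximizer of the concave function $L_{u^*}^{\gamma}(\hat u,\cdot)$ to deduce $\nabla_p\varphi(\Theta(\hat u),\hat p)=0$, which by the lemma supplies $\hat p\in\mathbf{C}^*$, $\Theta(\hat u)\in-\mathbf{C}$ and $\langle \hat p,\Theta(\hat u)\rangle=0$; in particular $\hat u\in\Gamma$ and $\varphi(\Theta(\hat u),\hat p)=\langle\hat p,\Theta(\hat u)\rangle$. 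To transfer the primal minimality, I would argue that $\hat u$ minimizes the convex function $u\mapsto L_{u^*}^{\gamma}(u,\hat p)$ on $\UU$ (convexity via Proposition~\ref{THM1_VIP}(iii)); using $\nabla_\theta\varphi(\Theta(\hat u),\hat p)=\hat p$ from \eqref{PI1_VIP}, the convex chain rule at $\hat u$ identifies the subdifferential of $u\mapsto\varphi(\Theta(u),\hat p)$ with that of $u\mapsto\langle\hat p,\Theta(u)\rangle$ at $\hat u$, so the same variational inequality characterizes optimality in the smoothed and un-smoothed problems, yielding the primal leg of \eqref{eq:L_saddlepoint}; the dual leg $L_{u^*}(\hat u,p)\le L_{u^*}(\hat u,\hat p)$ for $p\in\mathbf{C}^*$ is then immediate from $\Theta(\hat u)\in-\mathbf{C}$.

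I expect the main obstacle to be precisely this converse step: converting an unconstrained saddle of the smoothed Lagrangian into a cone-constrained saddle of the bare Lagrangian. The subtlety is that the natural inequality $\varphi(\Theta(u),\hat p)\ge\langle\hat p,\Theta(u)\rangle$ runs in the ``wrong'' direction for a direct comparison of $L_{u^*}^{\gamma}$ and $L_{u^*}$ away from $\hat u$, so brute substitution fails and one must route the argument through the subdifferential calculus, using the tangency identity $\nabla_\theta\varphi(\Theta(\hat u),\hat p)=\hat p$ to match first-order optimality conditions of the two convex minimization problems.
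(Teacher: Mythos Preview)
The paper does not supply its own proof of this proposition; immediately before Proposition~\ref{THM1_VIP} it states that the proofs of both Propositions~\ref{THM1_VIP} and~\ref{THM2_VIP} ``can be found in~\cite{CohenZ}.'' So there is no in-paper argument to compare against, and your proposal must be judged on its own.

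Your overall strategy is sound and is the standard route to this kind of result. The pointwise inequality $\varphi(\theta,p)\ge\langle p,\theta\rangle$ for $p\in\mathbf{C}^*$ with equality precisely under the cone KKT triple, together with Proposition~\ref{THM1_VIP}, carries the forward direction cleanly, and your use of (H$_6$) to produce the multiplier $p^*$ is correct. The one place that deserves a sharper justification is the converse primal step: you assert that ``the convex chain rule at $\hat u$ identifies the subdifferential of $u\mapsto\varphi(\Theta(u),\hat p)$ with that of $u\mapsto\langle\hat p,\Theta(u)\rangle$ at $\hat u$.'' The inequality $\varphi(\Theta(\cdot),\hat p)\ge\langle\hat p,\Theta(\cdot)\rangle$ by itself only yields the inclusion $\partial\bigl[\langle\hat p,\Theta(\cdot)\rangle\bigr](\hat u)\subseteq\partial\bigl[\varphi(\Theta(\cdot),\hat p)\bigr](\hat u)$, which is the wrong direction for transferring optimality from $L_{u^*}^{\gamma}$ to $L_{u^*}$. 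What makes equality hold is the additional fact that $\varphi(\cdot,\hat p)$ is $C^1$ with $\nabla_\theta\varphi(\Theta(\hat u),\hat p)=\hat p$, so the difference $g(\theta)=\varphi(\theta,\hat p)-\langle\hat p,\theta\rangle$ satisfies $g\ge0$, $g(\Theta(\hat u))=0$, and $\nabla g(\Theta(\hat u))=0$, hence $g(\theta)=o(\|\theta-\Theta(\hat u)\|)$; combined with (H$_5$) this gives $\varphi(\Theta(u),\hat p)-\langle\hat p,\Theta(u)\rangle=o(\|u-\hat u\|)$, which forces the two convex functions of $u$ to share the same subdifferential at $\hat u$. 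Equivalently, one can invoke the composite chain rule for a differentiable $\mathbf{C}$-monotone outer function and a $\mathbf{C}$-convex inner map. Either way, once this is made explicit your argument goes through.
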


Based on Proposition~\ref{THM2_VIP}, the saddle point $(u^*,p^*)$ of \eqref{eq:L_saddlepoint} and \eqref{eq:L_gamma_saddlepoint} can be characterized respectively by the following Lagrangian-like variational inequality system
\begin{eqnarray}
\mbox{(VIS) }  &&\langle G(u^*),u-u^*\rangle+J(u)-J(u^*) +\langle p^*,
\Theta (u)-\Theta (u^*)\rangle \geq 0,\, \forall u\in\UU\label{VIS1_VIP} \\
&&\langle \Theta (u^*), p-p^*\rangle \leq 0, \quad \forall p\in\mathbf{C}^*, \label {VIS2_VIP}
\end{eqnarray}
and the augmented Lagrangian-like variational inequality system
\begin{eqnarray}
\mbox{(aVIS)} &&\langle G(u^*),u-u^*\rangle+J(u)-J(u^*) +\varphi (\Theta (u),
p^*) -\varphi (\Theta (u^*),p^*) \geq 0, \;\forall u\in\UU  \label{VIS3_VIP}\\
&&\langle \Pi (p^* +\gamma\Theta (u^*)) -p^*, p-p^*\rangle \le 0, \quad
\forall p\in\mathbf{C}^*.\label {VIS4_VIP}
\end{eqnarray}
By the convexity of $J(u)$ and Lemma 3.3 in~\cite{Zhu03}, we further obtain the following equivalent formulation of (aVIS), which we shall call (AVIS) below:
\begin{eqnarray}
\mbox{(AVIS)}&&\langle G(u^*),u-u^*\rangle+J(u)-J(u^*) +\langle \Pi (p^* +\gamma\Theta
(u^*)), \Theta (u)-\Theta (u^*)\rangle \geq 0,\, \forall u\in\UU  \label{VIS5_VIP}\\
&&\langle \Pi (p^* +\gamma \Theta (u^* )) -p^* , p-p^*\rangle \leq 0, \,\, \forall p\in\mathbf{C}^*.\label {VIS6_VIP}
\end{eqnarray}
Up till this point, we have shown that for any $u^*$ of (VIP) there is $p^*\in\mathbf{C}^*$ such that $(u^*,p^*)$ solves both (VIS) and (AVIS). Next proposition shows that any solution of (VIS)/(AVIS) is in fact a solution for (VIP) as well.

\begin{proposition}[Solutions of (VIP) and (VIS)/(AVIS)]\label{THM3_VIP}
Two variational inequality systems (VIS) and (AVIS) have the same nonempty solution set $\setS_{\mbox{\tiny VIS}}$. For any $(u^*,p^*)\in\setS_{\mbox{\tiny VIS}}$, $u^*$ solves (VIP).
\end{proposition}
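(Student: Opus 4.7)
The plan is to split the proposition into its two assertions: first, that $\setS_{\mbox{\tiny VIS}}$ is nonempty and coincides with the solution set of (AVIS); and second, that any $(u^*,p^*) \in \setS_{\mbox{\tiny VIS}}$ yields a solution $u^*$ of (VIP).

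For the first assertion, I would lean directly on Proposition~\ref{THM2_VIP}. Systems (VIS) and (AVIS) are nothing but the first-order characterizations of saddle points for $L_{u^*}$ and $L_{u^*}^{\gamma}$ respectively (with (AVIS) obtained from (aVIS) via the $J$-convexity reformulation cited from Lemma 3.3 of \cite{Zhu03}). Since Proposition~\ref{THM2_VIP} asserts the equality of the two saddle-point sets on $\UU\times\mathbf{C}^*$, the solution sets of (VIS) and (AVIS) coincide. Nonemptiness then follows from $\setS_{\mbox{\tiny VIP}}\ne\emptyset$ (Assumption~H$_1$) combined with the same proposition, which supplies, for each $u^*\in\setS_{\mbox{\tiny VIP}}$, a multiplier $p^*\in\mathbf{C}^*$ so that $(u^*,p^*)$ is a saddle point and hence solves both systems.

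For the second assertion, I would extract three ingredients from (VIS), using only \eqref{VIS2_VIP} and the cone structure of $\mathbf{C}^*$. Taking $p = p^* + tq$ with $q\in\mathbf{C}^*$ and $t>0$ in \eqref{VIS2_VIP} and letting $t\to\infty$ gives $\langle \Theta(u^*), q\rangle \le 0$ for all $q\in\mathbf{C}^*$; by bipolarity of the closed convex cone $\mathbf{C}$, this delivers primal feasibility $\Theta(u^*)\in -\mathbf{C}$. Setting $p=0$ in \eqref{VIS2_VIP} yields $\langle \Theta(u^*), p^*\rangle \ge 0$; combined with $\langle p^*, \Theta(u^*)\rangle \le 0$ (from $p^*\in\mathbf{C}^*$ and $-\Theta(u^*)\in\mathbf{C}$) one obtains the complementarity identity $\langle p^*, \Theta(u^*)\rangle = 0$.

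With these three facts in hand, for any admissible $u\in\Gamma$ we have $\Theta(u)\in-\mathbf{C}$ and therefore $\langle p^*,\Theta(u)\rangle\le 0$; using complementarity, this gives $\langle p^*, \Theta(u)-\Theta(u^*)\rangle \le 0$. Substituting into \eqref{VIS1_VIP} removes the multiplier term with the correct sign and leaves
\begin{equation*}
\langle G(u^*), u-u^*\rangle + J(u) - J(u^*) \ge -\langle p^*, \Theta(u)-\Theta(u^*)\rangle \ge 0, \quad \forall u\in\Gamma,
\end{equation*}
so that $u^*$ solves (VIP). I expect no serious obstacle: the only subtlety is making sure the sign conventions on $\mathbf{C}$, $\mathbf{C}^*$ and the Lagrangian multiplier are tracked correctly when invoking bipolarity to upgrade the variational inequality \eqref{VIS2_VIP} into membership of $-\mathbf{C}$.
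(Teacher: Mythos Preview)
Your proposal is correct. The paper itself omits the proof entirely, simply citing Theorem~3.3 of~\cite{Zhu03}; your argument---invoking Proposition~\ref{THM2_VIP} for the equivalence of (VIS) and (AVIS) via the shared saddle-point set, and then extracting feasibility and complementarity from \eqref{VIS2_VIP} to drop the multiplier term in \eqref{VIS1_VIP}---is precisely the standard route that reference follows.
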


\begin{proof}
The proof is similar to that of Theorem 3.3 in~\cite{Zhu03}. We omit the details for succinctness.
\end{proof}

The conclusions in Proposition~\ref{THM3_VIP} encourage us to directly develop a primal-dual method for solving (VIP) by means of solving (VIS) and (AVIS). Finally, we observe that the solution $(u,p)$ of (VIS) satisfies the following KKT condition of (VIP):
\begin{equation}\label{eq:KKT}
\left\{
\begin{array}{l}
0\in G(u)+\partial J(u) +\left(\partial\Theta(u)\right)^{\top}p+\mathcal{N}_{\mathbf{U}}(u)      \\
0\in-\Theta(u)+\mathcal{N}_{\mathbf{C}^*}(p),
\end{array}
\right.
\end{equation}
where 
$\mathcal{N}_{\mathbf{S}}(x):=\{\xi \mid \langle\xi,\zeta-x\rangle\leq0, \forall\zeta\in\mathbf{S}\}$ is the normal cone at $x$ with respect to a given closed convex set $\mathbf{S}$.
For this reason, the solution $(u,p)$ of (VIS) can be understood as a KKT point of (VIP).

\subsection{A (mixed) Minty variational inequality system and the primal-dual variational coherence condition}\label{subsec:minty}
The (mixed) Minty variational inequality system with respect to (VIS) is to find $(u^\natural , p^\natural )\in \UU\times\mathbf{C}^*$ such that
\begin{eqnarray}
\mbox{(MVIS)} &&\langle G(u ),u-u^\natural \rangle+J(u)-J(u^\natural ) +\langle p^\natural ,
\Theta (u)-\Theta (u^\natural )\rangle \geq 0,\, \forall u\in\UU \label{MVIS1_VIP}\\
&&\langle  \Theta (u^\natural ), p-p^\natural \rangle \leq 0, \quad \forall p\in
\mathbf{C}^*, \label {MVIS2_VIP}
\end{eqnarray}
in which case $(u^\natural ,p^\natural )$ is referred to as a weak primal-dual solution of (VIP). Let $\setS_{\mbox{\tiny MVIS}}$ be the set of solutions for (MVIS).  If $G$ is not monotone, then we can only ensure $\setS_{\mbox{\tiny MVIS}} \subseteq \setS_{\mbox{\tiny VIS}}$.
Moreover, it can happen that $\setS_{\mbox{\tiny MVIS}}=\emptyset$ while $\setS_{\mbox{\tiny VIS}}\not=\emptyset$. However,
if $G$ is monotone, $\Theta$ is $\CC$-convex and $J$ is convex, then $\emptyset \not= \setS_{\mbox{\tiny MVIS}} = \setS_{\mbox{\tiny VIS}}$. 
For ease of referencing, let us introduce the following notion of {\em primal-dual variational coherence condition}:

\begin{definition}[Primal-dual variational coherence condition]\label{def:pd-coherent}
The variational inequality system (VIS) is said to satisfy the primal-dual variational coherence condition (or is called primal-dual variational coherent henceforth) if and only if $\setS_{\mbox{\tiny MVIS}}\neq\emptyset$.
\end{definition}

\begin{proposition}[Solution of (MVIS)]\label{Minty}
Let $(u^\natural ,p^\natural )$ be the solution of {\rm (MVIS)}. Then,
\begin{itemize}
\item[{\rm(i)}] $u^\natural$ solves {\rm (VIP)};
\item[{\rm(ii)}] $(u^\natural ,p^\natural )$ solves {\rm (VIS)}.
\end{itemize}
\end{proposition}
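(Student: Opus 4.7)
My plan is to establish (ii) first through a Minty-to-Stampacchia passage, and then to deduce (i) as an immediate corollary of Proposition~\ref{THM3_VIP}. The second inequality of (MVIS), namely (\ref{MVIS2_VIP}), already coincides in form with the second inequality of (VIS). Hence the substantive task is to upgrade (\ref{MVIS1_VIP}), where $G$ is evaluated at the variable test point $u$, into (\ref{VIS1_VIP}) where $G$ is evaluated at the fixed point $u^\natural$.

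To carry out that upgrade, I would fix $u\in\UU$ and, for $t\in(0,1]$, set $u_t:=(1-t)u^\natural+tu$, which lies in $\UU$ by convexity. Substituting $u_t$ as the test point in (\ref{MVIS1_VIP}), using $u_t-u^\natural=t(u-u^\natural)$, and dividing through by $t$ yields
\[
\langle G(u_t),\,u-u^\natural\rangle+\frac{J(u_t)-J(u^\natural)}{t}+\left\langle p^\natural,\,\frac{\Theta(u_t)-\Theta(u^\natural)}{t}\right\rangle\ge 0.
\]
Convexity of $J$ (H$_3$) gives $\frac{J(u_t)-J(u^\natural)}{t}\le J(u)-J(u^\natural)$, while $\mathbf{C}$-convexity of $\Theta$ (H$_4$) together with $p^\natural\in\mathbf{C}^*$ yields $\Theta(u_t)-(1-t)\Theta(u^\natural)-t\Theta(u)\in -\mathbf{C}$, hence $\left\langle p^\natural,\frac{\Theta(u_t)-\Theta(u^\natural)}{t}\right\rangle\le \langle p^\natural,\Theta(u)-\Theta(u^\natural)\rangle$. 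Inserting both bounds and letting $t\to 0^+$, with the continuity of $G$ (H$_2$) to pass $G(u_t)\to G(u^\natural)$, I arrive at
\[
\langle G(u^\natural),u-u^\natural\rangle+J(u)-J(u^\natural)+\langle p^\natural,\Theta(u)-\Theta(u^\natural)\rangle\ge 0,
\]
which is (\ref{VIS1_VIP}). Combined with (\ref{MVIS2_VIP})$=$(\ref{VIS2_VIP}), this shows $(u^\natural,p^\natural)\in\setS_{\mbox{\tiny VIS}}$, proving (ii). Part (i) then follows directly from Proposition~\ref{THM3_VIP}, which asserts that whenever $(u^*,p^*)\in\setS_{\mbox{\tiny VIS}}$, its primal component $u^*$ solves (VIP).

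No serious obstacle is anticipated. The only ingredient that distinguishes this from the textbook Minty-to-Stampacchia passage in the unconstrained setting is the linearized multiplier term $\langle p^\natural,\Theta(\cdot)-\Theta(u^\natural)\rangle$; it is handled by applying $\mathbf{C}$-convexity of $\Theta$ against the dual vector $p^\natural\in\mathbf{C}^*$ so that after division by $t$ a supporting-line-type inequality becomes available. All remaining ingredients (convexity of $J$ and $\UU$, continuity of $G$) are supplied by Assumption~\ref{assumpA}.
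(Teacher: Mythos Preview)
Your proof is correct and the core Minty--to--Stampacchia passage is the same as the paper's: substitute a convex combination approaching $u^\natural$ into (\ref{MVIS1_VIP}), use convexity of $J$ and $\mathbf{C}$-convexity of $\Theta$ against $p^\natural\in\mathbf{C}^*$ to bound the difference quotients, and pass to the limit via continuity of $G$. The only organizational difference is that the paper proves (i) first by restricting to $\Gamma$ and then notes (ii) follows by the same argument, whereas you prove (ii) directly and then invoke Proposition~\ref{THM3_VIP} to obtain (i); your ordering is slightly more economical since it avoids repeating the limiting argument.
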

\begin{proof}$\quad$
\begin{itemize}
\item[{\rm(i)}] For all $u\in \Gamma$, we have $\Theta (u)\in -\CC$. Thus $\langle p^\natural , \Theta (u)\rangle\leq 0$, and $\langle p^\natural , \Theta (u^\natural )\rangle= 0$. Hence, we have
\begin{eqnarray}\label{eq:MVIS-VIS-1}
\langle G(u), u-u^\natural \rangle +J(u)-J(u^\natural )\geq -\langle p^\natural , \Theta (u)\rangle\geq 0,\quad\;\forall u\in \Gamma.
\end{eqnarray}
By~\eqref{MVIS2_VIP}, we have $ \Theta (u^\natural ) \in -\CC$, and so $u^{\natural}\in\Gamma$. Take any $v\in\Gamma$. Let $u=(1-t)v+tu^{\natural}\in\Gamma$ with $0\leq t<1$. By~\eqref{eq:MVIS-VIS-1} and the convexity of $J$, we have
\begin{eqnarray*}
&&\langle G((1-t)v+tu^{\natural}),(1-t)(v-u^{\natural})\rangle+(1-t)\left[J(v)-J(u^{\natural})\right]\\
&\geq&\langle G((1-t)v+tu^{\natural}),(1-t)(v-u^{\natural})\rangle+J((1-t)v+tu^{\natural})-J(u^{\natural})\\
&\geq&0,
\end{eqnarray*}
and so, $\langle G((1-t)v+tu^{\natural}),v-u^{\natural}\rangle+J(v)-J(u^{\natural})\geq0$.
Letting $t\uparrow 1$, by the continuity of $G$, we have
\[
\langle G(u^{\natural}),v-u^{\natural}\rangle+J(v)-J(u^{\natural})\geq0,\quad\forall v\in\Gamma,
\]
which shows that $u^\natural$ solves (VIP).
\item[{\rm(ii)}] Observe that $\langle p^{\natural},\Theta(u) \rangle$ is convex in $u$, by the same argument as (i), we have
\[
\langle G(u^{\natural}),v-u^{\natural}\rangle+J(v)-J(u^{\natural})+\langle p^{\natural},\Theta(v)-\Theta(u^{\natural})\rangle\geq0,\quad\forall v\in\UU.
\]
Therefore, $(u^{\natural},p^{\natural})$ solves (VIS).
\end{itemize}
\end{proof}

Proposition~\ref{Minty} assures that if (VIS) satisfies the primal-dual variational coherence condition then it suffices to find $(u^\natural ,p^\natural )\in \setS_{\mbox{\tiny MVIS}}$, because in this case $u^\natural\in \setS_{\mbox{\tiny VIP}}$ and $(u^\natural,p^\natural)\in \setS_{\mbox{\tiny VIS}}$. It remains to find more tangible conditions that can ensure (VIS) to be primal-dual variational coherent, which motivates the discussion in the next section.

\section{Sufficient Conditions for the Primal-Dual Variational Coherence }\label{sec:monotone}

It turns out that the following generalized monotonicity conditions will lead to the desired primal-dual variational coherence condition stipulated in Definition~\ref{def:pd-coherent}. Note that these are merely known classes of sufficient conditions to guarantee the property required in Definition~\ref{def:pd-coherent}, and they are by no means exhaustive. In Section~\ref{sec:ALAVI_convergence}, we will present our bid to solve (VIP)---a new algorithm to be called
Augmented Lagrangian Approach to Variational Inequalities
(ALAVI). The convergence of ALAVI does not require (VIP) to be monotone; however, it does require (VIS) to be primal-dual variational coherent.

\begin{definition}[Generalized monotonicity of mapping $G$]\label{def:general_monotone}\quad\\
{\rm 1.} {\bf (Monotone)} The mapping $G$ is monotone on $\UU$ if for all $u,v\in\UU$, we have
\[
\langle G(u)-G(v),u-v\rangle\geq0.
\]
{\rm 2.} {\bf (Co-coercive)} We call the mapping $G$ is co-coercivity on $\UU$ if for all $u,v\in\UU$ such that
\[
\langle G(u)-G(v),u-v\rangle \geq \mu \|G(u)-G(v)\|^2.
\]
{\rm 3.} {\bf (Star-monotone)} Let $u^*$ be one solution of (VIP). The mapping $G$ is star-monotone at $u^*$ on $\UU$, if for all $u\in\UU$ we have
\[
\langle G(u)-G(u^*),u-u^*\rangle\geq0.
\]
{\rm 4.} {\bf (Pseudo-monotone)} The mapping $G$ is pseudo-monotone on $\UU$, if for all $u,v\in\UU$ such that
\[
\langle G(u),v-u\rangle\geq 0 \Longrightarrow\langle G(v),v-u\rangle\geq0.
\]
{\rm 5.} {\bf ($J$-Pseudo-monotone)} The mapping $G$ is $J$-pseudo-monotone on $\UU$, if for all $u,v\in\UU$ such that
\[
\langle G(u),v-u\rangle+J(v)-J(u)\geq 0 \Longrightarrow\langle G(v),v-u\rangle+J(v)-J(u)\geq 0,
\]
where $J$ is a function on $\UU$.

\noindent{\rm 6.} {\bf (($J+\langle p^*,\Theta \rangle )$-Pseudo-monotone)} The mapping $G$ is $(J+\langle p^*,\Theta \rangle )$-pseudo-monotone on $\UU$, if for multiplier $p^*$ of (VIP) and all $u,v\in\UU$ such that
\[
\begin{aligned}
\langle G(u),v-u\rangle&+J(v)-J(u) +\langle p^*,\Theta (v)-\Theta (u) \rangle\geq0 \\
&\Longrightarrow\langle G(v),v-u\rangle+J(v)-J(u) +\langle p^*,\Theta (v)-\Theta (u) \rangle\geq 0,
\end{aligned}
\]
where $J$ is a function on $\UU$.

\noindent{\rm 7.} {\bf (Quasi-monotone)} The mapping $G$ is quasi-monotone on $\UU$, if for all $u,v\in\UU$ such that
\[
\langle G(u),v-u\rangle > 0 \Longrightarrow\langle G(v),v-u\rangle \geq 0.
\]

\noindent{\rm 8.} {\bf ($J$-Quasi-monotone)} The mapping $G$ is $J$-quasi-monotone on $\UU$, if for all $u,v\in\UU$ such that
\[
\langle G(u),v-u\rangle+J(v)-J(u)> 0 \Longrightarrow\langle G(v),v-u\rangle+J(v)-J(u)\geq 0,
\]
where $J$ is a function on $\UU$.

\noindent{\rm 9.} {\bf (($J+\langle p^*,\Theta \rangle )$-Quasi-monotone)} The mapping $G$ is $(J+\langle p^*,\Theta \rangle )$-quasi-monotone on $\UU$, if for multiplier $p^*$ of (VIP) and all $u,v\in\UU$ such that
\[
\begin{aligned}
\langle G(u),v-u\rangle&+J(v)-J(u) +\langle p^*,\Theta (v)-\Theta (u) \rangle > 0\\
&\Longrightarrow\langle G(v),v-u\rangle+J(v)-J(u) +\langle p^*,\Theta (v)-\Theta (u) \rangle\geq 0,
\end{aligned}
\]
where $J$ is a function on $\UU$.
\end{definition}

Examples are constructed to illustrate that these classes are non-identical. 
In order not to disrupt the main flow of the presentation, these examples are delegated to Appendix~\ref{appendix}.

\begin{figure}[h]
\centering
\begin{center}
\begin{tikzpicture}
[
>=latex,
node distance=5mm,
 ract/.style={draw=blue!50, fill=blue!5,rectangle,minimum size=6mm, very thick, font=\itshape, align=center},
 racc/.style={rectangle, align=center},
 ractm/.style={draw=red!100, fill=red!5,rectangle,minimum size=6mm, very thick, font=\itshape, align=center},
 cirl/.style={draw, fill=yellow!20,circle,   minimum size=6mm, very thick, font=\itshape, align=center},
 raco/.style={draw=green!500, fill=green!5,rectangle,rounded corners=2mm,  minimum size=6mm, very thick, font=\itshape, align=center},
 hv path/.style={to path={-| (\tikztotarget)}},
 vh path/.style={to path={|- (\tikztotarget)}},
 skip loop/.style={to path={-- ++(0,#1) -| (\tikztotarget)}},
 vskip loop/.style={to path={-- ++(#1,0) |- (\tikztotarget)}}]
\begin{scope}[blend group = soft light]
\fill[blue!30!white] (0:-1.6) ellipse (4 and 2);
\fill[red!30!white] (0:1.6) ellipse (4 and 2);
\fill[green!30!white] (0:0) ellipse (2 and 1);
\fill[white]  (0:0) ellipse (1.6 and 0.5);
\end{scope}
\draw[draw=black!100,very thick] (0:0) ellipse (6 and 3.5);
\draw[draw=black!100,very thick] (0:-1.6) ellipse (4 and 2);
\draw[draw=black!100,very thick] (0:1.6) ellipse (4 and 2);
\draw[draw=black!100,very thick] (0:0) ellipse (2 and 1);
\draw[draw=black!100,very thick] (0:0) ellipse (1.6 and 0.5);
\node (b3) [font=\Large] at ( 0,2.8)    {\normalsize Primal-Dual Variational Coherence};
\node (b33) [font=\Large] at ( 0,2.3)    {\normalsize ($\setS_{\mbox{\tiny MVIS}}\neq\emptyset$)};
\node (b333) [font=\Large] at ( -2.5,2.3)    {\normalsize {\color{red}$\bullet$ Example \ref{exap3}}};
\node (b3') [font=\Large] at ( -4,0.5)    {\normalsize $J+\langle p^*,\Theta \rangle$};
\node (b3'') [font=\Large] at ( -4,0)   {\normalsize-pseudo-monotone};
\node (b333) [font=\Large] at ( -3.5,-0.8)    {\normalsize {\color{red}$\bullet$ Example \ref{exap1}}};
\node (b4) [font=\Large] at ( 0,0.75)    {\normalsize monotone};
\node (b4) [font=\Large] at ( 0,1.2)    {\normalsize {\color{red}$\bullet$ Example \ref{exap2}}};
\node (b5) [font=\Large] at ( 0,0)    {\normalsize co-coercive};
\node (b6) [font=\Large] at ( 4,0.5)    {\normalsize star monotone};
\node (b6') [font=\Large] at ( 4,0)    {\normalsize at $u^*$};
\node (b333) [font=\Large] at ( 3.5,-0.8)    {\normalsize {\color{red}$\bullet$ Example \ref{exap4}}};
\end{tikzpicture}
\caption{Relationship between generalized monotonicity and primal-dual variational coherence}\label{fig:1}
\end{center}
\end{figure}
Figure~\ref{fig:1} and the following four examples illustrate the relationship between generalized monotonicity and the primal-dual variational coherent condition as given 
in Definition~\ref{def:general_monotone} and Definition~\ref{def:pd-coherent}.

The next proposition provides some sufficient conditions for $\setS_{\mbox{\tiny MVIS}} \neq \emptyset$ (or according to Definition~\ref{def:pd-coherent}, (VIS) is primal-dual variational coherent), which in turn will guarantee the convergence of our algorithm ALAVI to be proposed in Section~\ref{sec:ALAVI_convergence}.

\begin{proposition}[Sufficient conditions for $\setS_{\mbox{\tiny MVIS}}\neq\emptyset$]
Suppose $(u^*, p^*)\in \setS_{\mbox{\tiny VIS}}$. If one of the following conditions holds, then (VIS) satisfies the primal-dual variational coherence condition (Definition~\ref{def:pd-coherent}), namely $\setS_{\mbox{\tiny MVIS}}\neq\emptyset$.
\begin{itemize}
\item [{\rm(i)}] $G$ is star monotone at $u^*$ on $\UU$.
\item [{\rm(ii)}] $G$ is $(J+\langle p^*,\Theta \rangle )$-pseudo-monotone on $\UU$.
\item [{\rm(iii)}] Let $\setS_{\mbox{\tiny T}}=\{ (u^*,p^*)\in \setS_{\mbox{\tiny VIS}}\mid\langle G(u^*), u-u^*\rangle +J(u)-J(u^*)+\langle p^*, \Theta (u)-\Theta (u^*)\rangle =0,\; \forall u\in \UU\}$ and $\setS_{\mbox{\tiny N}}=\setS_{\mbox{\tiny VIS}}\setminus\setS_{\mbox{\tiny T}}$. $G$ is $(J+\langle p^*,\Theta \rangle)$-quasi-monotone on $\UU$ and $\setS_{\mbox{\tiny N}}\neq \emptyset$.
\item [{\rm(iv)}] In the special case when $J(u)=0,\;p^*=0$, any one of the following holds:
\begin{itemize}
\item [{\rm(a)}] $G$ is the gradient of a differentiable quasi convex function;
\item [{\rm(b)}] $G$ is quasi monotone, $G\neq 0$ on $\UU$ and $\UU$ is bounded;
\item [{\rm(c)}] $G$ is quasi monotone, $G\neq 0$ on $\UU$ and there exists a positive number $r$ such that, for every $u\in \UU$ with $\|u\|\geq r$, there exists $v\in \UU$ such that $\|v\|\leq r$ and $\langle G(u), v-u\rangle \leq 0$;
\item [{\rm(d)}] $G$ is pseudo monotone at $u^*\in \setS_{\mbox{\tiny VIP}}$ (star pseudo monotone);
\item [{\rm(e)}] $G$ is quasi monotone at $u^*\in \setS_{\mbox{\tiny VIP}}$ (star quasi monotone) and $G(u^*)$ is not normal to $\UU$ at $u^*$.
\end{itemize}
\end{itemize}
\end{proposition}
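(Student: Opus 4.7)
The plan is to show in each case that the VIS solution $(u^*,p^*)$ itself (or, in case (iii), a judiciously chosen one) belongs to $\setS_{\mbox{\tiny MVIS}}$. The second (MVIS) inequality on the dual variable coincides verbatim with the second (VIS) inequality, so throughout the entire argument reduces to establishing the Minty-type primal inequality
\[
\Psi_M(u):=\langle G(u),u-u^*\rangle+J(u)-J(u^*)+\langle p^*,\Theta(u)-\Theta(u^*)\rangle\ge 0,\quad\forall u\in\UU,
\]
starting from the Stampacchia-type inequality granted by (VIS):
\[
\Psi(u):=\langle G(u^*),u-u^*\rangle+J(u)-J(u^*)+\langle p^*,\Theta(u)-\Theta(u^*)\rangle\ge 0,\quad\forall u\in\UU.
\]

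Cases (i) and (ii) are one-line applications. In (i) I would add the star-monotonicity inequality $\langle G(u)-G(u^*),u-u^*\rangle\ge 0$ to $\Psi(u)\ge 0$ to produce $\Psi_M(u)\ge 0$. In (ii) I would observe that $\Psi(u)\ge 0$ is exactly the premise of the $(J+\langle p^*,\Theta\rangle)$-pseudo-monotonicity at the pair $(u^*,u)$, whose conclusion is precisely $\Psi_M(u)\ge 0$.

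Case (iii) is the technical crux. I would pick $(u^*,p^*)\in\setS_{\mbox{\tiny N}}$, so that some $\hat u\in\UU$ with $\Psi(\hat u)>0$ exists. For any $u\in\UU$ with $\Psi(u)>0$, quasi-monotonicity at $(u^*,u)$ yields $\Psi_M(u)\ge 0$ directly. The nontrivial case is $\Psi(u)=0$. Here I would approximate $u$ by $v_t:=(1-t)u+t\hat u$ and exploit convexity of $\Psi$ (the first summand is affine, $J$ is convex, and $\langle p^*,\Theta(\cdot)\rangle$ is convex because $p^*\in\mathbf{C}^*$ and $\Theta$ is $\mathbf{C}$-convex) to arrange a sequence $t_n\downarrow 0$ along which $\Psi(v_{t_n})>0$; quasi-monotonicity then furnishes $\Psi_M(v_{t_n})\ge 0$, and the Lipschitz continuity of $G$ together with the continuity of $\Theta$ and of $J$ along the segment lets me pass to the limit. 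The obstacle here is ruling out the pathological possibility that the zero set of $\Psi$ contains $u$ in its relative interior within $\UU$; in that situation I would iterate the perturbation argument toward the relative boundary of the zero set along a direction into $\{\Psi>0\}$ provided by $\hat u$.

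For (iv), the assumptions $J\equiv 0$ and $p^*=0$ collapse (VIS) to $\langle G(u^*),u-u^*\rangle\ge 0$ on $\UU$ together with $\Theta(u^*)\in-\mathbf{C}$, and (MVIS) to the analogous Minty condition with $p^\natural=0$. I would then dispatch the subcases by classical Minty-style arguments: (d) is a one-line invocation of pseudo-monotonicity at $u^*$, mirroring (ii); (a) follows because the gradient of a differentiable quasi-convex function is pseudo-monotone at minimizers over a convex set, reducing to (d); (b) and (c) are Karamardian-type results combining quasi-monotonicity with boundedness or a recession/coercivity condition to certify a Minty solution; and (e) augments quasi-monotonicity at $u^*$ with the non-normality of $G(u^*)$---which supplies a direction into $\UU$ along which $\langle G(u^*),\cdot-u^*\rangle>0$ nearby---to reinstate the strict Stampacchia inequality needed to invoke the quasi-monotonicity implication, and then passes to the limit as in case (iii).
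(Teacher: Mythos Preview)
Your approach coincides with the paper's. Cases (i) and (ii) are argued identically; for (iv) the paper simply cites \cite{MZ01}, so your subcase sketches go beyond what the paper writes. For (iii), the paper takes the same route---fix $(u^*,p^*)\in\setS_{\mbox{\tiny N}}$ and establish, for each test point, the dichotomy ``either $\Psi_M(u_1)\ge 0$ or $\Psi\le 0$ on all of $\UU$''---but delegates the verification to ``a similar argument as in Lemma~3.1 of \cite{HS96}'' rather than writing out the perturbation-and-limit step.

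One correction in your handling of (iii): convexity of $\Psi$ gives only the \emph{upper} bound $\Psi(v_t)\le (1-t)\Psi(u)+t\Psi(\hat u)$, not a lower bound, so it cannot by itself force $\Psi(v_{t_n})>0$ along a sequence $t_n\downarrow 0$. The obstacle you flag is exactly this issue. In the classical setting of \cite{HS96} (no $J$, no $\Theta$ term) the function $\Psi$ is affine, the convexity inequality is an equality, and the argument goes through cleanly; the paper does not spell out how the adaptation to the convex-$\Psi$ case is handled beyond the citation, so on this point your write-up and the paper's are at the same level of detail.
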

\begin{proof}{\ }

\begin{itemize}
\item[{\rm(i)}] By the star monotonicity of $G$ at $u^*$, we have $\langle G(u)-G(u^*), u-u^*\rangle \ge 0, \; \forall u\in \UU$, and so,
\begin{eqnarray}\label{p21}
\langle G(u), u-u^*\rangle \ge \langle G(u^*), u-u^*\rangle .
\end{eqnarray}
Adding $J(u)-J(u^*)+\langle p^*, \Theta (u)-\Theta (u^*)\rangle $ on two sides of \eqref {p21}, one gets
\begin{eqnarray}
\langle G(u), u-u^*\rangle +J(u)-J(u^*)+\langle p^*, \Theta (u)-\Theta (u^*)\rangle \ge 0.
\end{eqnarray}
Together with \eqref{VIS2_VIP}, the proof is complete for this case. 
\item[{\rm(ii)}] The result directly follows from the $(J+\langle p^*,\Theta \rangle )$-pseudo-monotonicity of $G$ on $\UU$ and (\ref {VIS2_VIP}).
\item[{\rm(iii)}] Let $(u^*,p^*)\in \setS_{\mbox{\tiny N}}$. For any fixed $u_1\in \UU$, since $ \setS_{\mbox{\tiny N}}\subseteq \setS_{\mbox{\tiny VIS}}$, we have
\[
\langle G(u^*), u_1-u^*\rangle +J(u_1)-J(u^*)+\langle p^*, \Theta (u_1)-\Theta (u^*)\rangle \ge 0 .
\]
By the convexity of $J(\cdot)$, $\langle p^*, \Theta(\cdot)\rangle$ and the $(J+\langle p^*, \Theta(\cdot)\rangle$-quasi-monotonicity of $G$, and by a similar argument as in Lemma 3.1 of~\cite{HS96}, we conclude that at least one of the following must hold:
\[
\langle G(u_1), u_1-u^*\rangle +J(u_1)-J(u^*)+\langle p^*, \Theta (u_1)-\Theta (u^*)\rangle \ge 0
\]
or
\[
\langle G(u^*), u-u^*\rangle +J(u)-J(u^*)+\langle p^*, \Theta (u)-\Theta (u^*)\rangle \leq 0 , \;\forall u\in \UU.
\]
The second inequality implies $(u^*,p^*)\in \setS_{\mbox{\tiny T}}$ which contradicts with $(u^*,p^*)\in \setS_{\mbox{\tiny N}}$. Therefore, the first inequality must hold. Because $u_1\in \UU$ can be taken arbitrarily, it follows that $\setS_{\mbox{\tiny MVIS}}\neq \emptyset $.
\item[{\rm(iv)}] The proof can be found in~\cite{MZ01}.
\end{itemize}
\end{proof}

\section{Augmented Lagrangian Approach to Variational Inequality Problems  (ALAVI)}\label{sec:ALAVI_convergence}

\subsection{Scheme ALAVI}
\label{subsec:ALAVI}

For (VIP) (Model~\eqref{Prob:VIP}), in its equivalently reformulated model (AVIS)~\eqref{VIS5_VIP}-\eqref{VIS6_VIP}, one may refer the variable $u$ as {\it primal}, and $p$ as {\it dual}. The crux in our newly proposed algorithm is to introduce two extrapolated sequences: $\{v^k\}$ (associated with $\{u^k\}$) and $\{ q^k\}$ (associated with the partial differential of the augmented Lagrangian term $\varphi(\theta,p)$ with respect to $\theta$~\eqref{PI1_VIP} at $(u^k,p^k)$). The procedure somehow resembles, in spirit, to Nesterov's gradient acceleration algorithm~\cite{N03} and the extra-gradient algorithm for VI~\cite{HZ21}, although the context here is completely different. For one, the underlying VI problem is non-monotone; for two, the procedure involves primal and dual variables. The augmented Lagrangian plays an important role in this design, hence the name ALAVI (Augmented Lagrangian Approach to (VIP)).

\noindent\rule[0.1\baselineskip]{\textwidth}{1.5pt}
{\bf ALAVI (Augmented Lagrangian Approach to (VIP))}

\noindent\rule[0.25\baselineskip]{\textwidth}{0.5pt}

{\bf Input:} $u^1,v^0\in \RR^n$, $p^0\in \RR^m$, and the algorithmic parameters: $0<\eta<1,\, \alpha>0,\, \gamma>0$. Set $k=1$.

{\bf Iterate $k$:}
\begin{eqnarray}
 v^k &:=& (1-\eta)u^k+\eta v^{k-1}; \label{ALAVI_relax}\\
 q^k &:=& \Pi\left(p^k+\gamma\Theta(u^k)\right);  \label{update q vector}\\
 u^{k+1} &:=& \arg\min_{u\in\UU} \,\, \langle G(u^k),u\rangle+J(u)+\langle q^k,\Theta(u)\rangle+\frac{1}{2\alpha}\|u-v^k\|^2; \label{ALAVI_primal}\\
 p^{k+1} &:=& \Pi\left(p^k+\gamma\Theta(u^{k+1})\right)\label{ALAVI_dual}; \\
 k &:=& k+1, \mbox{ and return to {\bf Iterate $k$}.} \nonumber
\end{eqnarray}

\noindent\rule[0.25\baselineskip]{\textwidth}{0.5pt}

In the primal updating subproblem~\eqref{ALAVI_primal}, the iteration $v^k$ is taken as the reference point in the regularization term $\frac{1}{2\alpha}\|u-v^k\|^2$ rather than the last iterate $u^k$. The value of $v$ is updated through a convex combination of last iterates $v^{k-1}$ and $u^k$.
Remark that subproblem~\eqref{ALAVI_primal} is specially susceptible to decomposition. For example, if
\begin{equation}\label{eq:decomposition}
\UU=\UU_1\times\UU_2\times \cdots\times\UU_N, \; u_i\in\UU_i\subseteq \RR^{n_i}
\end{equation}
and $\Theta(u)$ and $J(u)$ are additive, i.e.,
\[
\Theta(u)=\sum_{i=1}^N\Theta_i(u_i)\quad\mbox{and}\quad J(u)=\sum_{i=1}^NJ_i(u_i),
\]
then, subproblem~\eqref{ALAVI_primal} can be decomposed into $N$ independent subproblems with respect to the decomposition~\eqref{eq:decomposition}:
\begin{equation}
u_{i}^{k+1}=\arg\min_{u_i\in\UU_i}\langle G_i(u^k),u_i\rangle+J_i(u_i)+\langle q^k,\Theta_i(u_i)\rangle+\frac{1}{2\alpha}\|u_i-v_i^k\|^2, \, i=1,2,...,N.
\end{equation}

\subsection{A Lyapunov function and its analysis}\label{subsec:Lyapnov}

Let $u^*$ be a solution of (VIP). For any $(u,p)\in\UU\times\mathbf{C}^*$, we construct the following Lyapnov function
\begin{eqnarray}\label{eq:Lambda_VIP}
\Lambda^k(u,p):=\beta_1\|v^k-u\|^2+\beta_2\|u^{k-1}-u^k\|^2+\beta_3\|p^{k-1}-p\|^2,
\end{eqnarray}
where $\beta_1=\frac{1}{2\alpha(1-\eta)}$, $\beta_2=\frac{\gamma\tau^2+L+\tau}{2}$ and $\beta_3=\frac{1}{2\gamma}$.
\begin{lemma}[Estimation on the change of $\Lambda^k(u,p)$]\label{lemma:bound1_VIP}
Suppose Assumption~\ref{assumpA} holds, and the parameters satisfy $\frac{\sqrt{5}-1}{2}\leq\eta<1$, $0<\gamma<\frac{1}{\tau}$ and $0<\alpha\leq\frac{1}{2(\gamma\tau^2+L+\tau)\eta}$. Let the sequence $\{(v^k,u^k,p^k) \mid k=1,2,...\}$ be generated by ALAVI. Then, for all $(u,p)\in\UU\times\mathbf{C}^*$,
\begin{eqnarray*}
\Lambda^{k}(u,p)-\Lambda^{k+1}(u,p)&\geq&\left[L_{u^*}(u^{k},p)-L_{u^*}(u,q^{k})\right]+\langle G(u^k)-G(u^*),u^k-u\rangle\\
&&+\rho \left[\|w^{k-1}-w^k\|^2+\|p^k-q^k\|^2\right],
\end{eqnarray*}
where $w:=\left(\begin{array}{c} v\\ u\\ p\end{array}\right)$ and $\rho:=\min\left\{\frac{\eta}{2\alpha(1-\eta)^2},\frac{\tau}{2},\frac{1-\tau\gamma}{2\gamma},\frac{1}{\gamma}\right\}>0$.
\end{lemma}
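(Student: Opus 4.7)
The plan is to derive the claim by combining three ingredients: (a) the convex-combination identity stemming from the extrapolation $v^{k+1}=(1-\eta)u^{k+1}+\eta v^k$; (b) the projection characterizations of $p^k=\Pi(p^{k-1}+\gamma\Theta(u^k))$ and $q^k=\Pi(p^k+\gamma\Theta(u^k))$; and (c) the strong convexity of the primal subproblem~\eqref{ALAVI_primal} (note that $J+\langle q^k,\Theta(\cdot)\rangle$ is convex on $\UU$ since $q^k\in\mathbf{C}^*$ and $\Theta$ is $\mathbf{C}$-convex). First, I would invoke the identity $\|v^{k+1}-u\|^2=(1-\eta)\|u^{k+1}-u\|^2+\eta\|v^k-u\|^2-\eta(1-\eta)\|u^{k+1}-v^k\|^2$ to rewrite $\beta_1[\|v^k-u\|^2-\|v^{k+1}-u\|^2]=\tfrac{1}{2\alpha}[\|v^k-u\|^2-\|u^{k+1}-u\|^2]+\tfrac{\eta}{2\alpha}\|u^{k+1}-v^k\|^2$. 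Testing the projection defining $p^k$ at $p\in\mathbf{C}^*$ gives $\langle\Theta(u^k),p-p^k\rangle+\tfrac{1}{2\gamma}\|p^{k-1}-p^k\|^2\le\beta_3[\|p^{k-1}-p\|^2-\|p^k-p\|^2]$, and $1/\alpha$-strong convexity of \eqref{ALAVI_primal} gives the three-point bound $\langle G(u^k),u^{k+1}-u\rangle+J(u^{k+1})-J(u)+\langle q^k,\Theta(u^{k+1})-\Theta(u)\rangle\le\tfrac{1}{2\alpha}[\|u-v^k\|^2-\|u-u^{k+1}\|^2-\|u^{k+1}-v^k\|^2]$. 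Chaining these ingredients produces
\begin{equation*}
\Lambda^k-\Lambda^{k+1}\ge\mathcal S+\beta_2[\|u^{k-1}-u^k\|^2-\|u^k-u^{k+1}\|^2]+\tfrac{1+\eta}{2\alpha}\|u^{k+1}-v^k\|^2+\tfrac{1}{2\gamma}\|p^{k-1}-p^k\|^2,
\end{equation*}
where $\mathcal S:=J(u^{k+1})-J(u)+\langle G(u^k),u^{k+1}-u\rangle+\langle q^k,\Theta(u^{k+1})-\Theta(u)\rangle+\langle\Theta(u^k),p-p^k\rangle$.

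A purely algebraic rearrangement shows that $[L_{u^*}(u^k,p)-L_{u^*}(u,q^k)]+\langle G(u^k)-G(u^*),u^k-u\rangle=\mathcal S+\Delta_1+\Delta_2$, where $\Delta_1:=\langle G(u^k),u^k-u^{k+1}\rangle+J(u^k)-J(u^{k+1})+\langle q^k,\Theta(u^k)-\Theta(u^{k+1})\rangle$ and $\Delta_2:=\langle p^k-q^k,\Theta(u^k)\rangle$. Testing the projection defining $q^k$ at $p^k$ yields $\Delta_2\le-\tfrac{1}{\gamma}\|p^k-q^k\|^2$, which absorbs the $\rho\|p^k-q^k\|^2$ residual since $\rho\le 1/\gamma$. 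The main obstacle is therefore an upper bound on $\Delta_1$: I would apply first-order optimality of the primal step at iteration $k-1$ tested at $u^{k+1}$, which after using $u^k-v^{k-1}=\tfrac{1}{1-\eta}(v^k-v^{k-1})$ gives $J(u^k)-J(u^{k+1})+\langle G(u^{k-1}),u^k-u^{k+1}\rangle+\langle q^{k-1},\Theta(u^k)-\Theta(u^{k+1})\rangle\le\tfrac{1}{\alpha(1-\eta)}\langle v^{k-1}-v^k,u^k-u^{k+1}\rangle$. The two correction pieces $\langle G(u^k)-G(u^{k-1}),\cdot\rangle$ and $\langle q^k-q^{k-1},\cdot\rangle$ (with the latter split as $\langle p^k-q^{k-1},\cdot\rangle+\langle q^k-p^k,\cdot\rangle$) are controlled via the Lipschitz constants $L,\tau$ and two non-expansiveness bounds, $\|p^k-q^{k-1}\|\le\gamma\tau\|u^{k-1}-u^k\|$ and $\|q^k-p^k\|\le\|p^{k-1}-p^k\|$. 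Applying AM--GM on the $\|q^k-p^k\|\cdot\tau\|u^k-u^{k+1}\|$ product---admissible because $\gamma<1/\tau$---tunes the coefficients so that the aggregate weight on $\|u^k-u^{k+1}\|^2$ is exactly $\beta_2$ and on $\|p^{k-1}-p^k\|^2$ is exactly $\tfrac{\tau}{2}$.

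The closing step is coefficient matching. The net coefficients on $\|u^{k-1}-u^k\|^2$, $\|p^{k-1}-p^k\|^2$, and $\|p^k-q^k\|^2$ reduce to $\tfrac{\tau}{2}-\rho$, $\tfrac{1-\gamma\tau}{2\gamma}-\rho$, and $\tfrac{1}{\gamma}-\rho$, each non-negative by the definition of $\rho$. The genuinely delicate part is the residual quadratic form in $(u^{k+1}-u^k,v^k-v^{k-1})$ that arises once $\|u^{k+1}-v^k\|^2$ is expanded as $\|u^{k+1}-u^k\|^2+\tfrac{2\eta}{1-\eta}\langle u^{k+1}-u^k,v^k-v^{k-1}\rangle+\tfrac{\eta^2}{(1-\eta)^2}\|v^k-v^{k-1}\|^2$: the cross-term coefficient equals $\tfrac{\eta^2+\eta-1}{\alpha(1-\eta)}$, non-negative precisely when $\eta\ge\tfrac{\sqrt{5}-1}{2}$, and the $2\times 2$ discriminant inequality required for positive semidefiniteness reduces to $\alpha\le\tfrac{1}{4\beta_2\eta}=\tfrac{1}{2(\gamma\tau^2+L+\tau)\eta}$, which is exactly the parameter restriction in the lemma. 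This PSD check on the $(u^{k+1}-u^k,v^k-v^{k-1})$ form is the single genuinely tricky step of the proof.
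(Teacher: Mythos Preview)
Your proposal is correct and follows essentially the same strategy as the paper's proof: both combine the optimality condition of~\eqref{ALAVI_primal} at iteration $k$ (tested at a generic $u$) and at iteration $k-1$ (tested at $u^{k+1}$), the projection characterizations of $p^k$ and $q^k$, and the Lipschitz/non-expansiveness bounds, then assemble the Lyapunov decrement. The only difference is in the closing bookkeeping. The paper uses the relation $u^k-v^{k-1}=\tfrac{1}{\eta}(u^k-v^k)$ and polarizes the cross term $\tfrac{1}{\alpha\eta}\langle u^k-v^k,u^{k+1}-u^k\rangle$ into squares of $\|v^k-u^{k+1}\|$, $\|u^k-v^k\|$, $\|u^k-u^{k+1}\|$; the $\|v^k-u^{k+1}\|^2$ term then carries the coefficient $\tfrac{\eta^2+\eta-1}{2\alpha\eta}\ge 0$, which is simply dropped, and the $\|u^k-u^{k+1}\|^2$ term with coefficient $\tfrac{1}{2\alpha\eta}\ge 2\beta_2$ absorbs the correction. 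You instead use $u^k-v^{k-1}=\tfrac{1}{1-\eta}(v^k-v^{k-1})$, expand $\|u^{k+1}-v^k\|^2$ in $(u^{k+1}-u^k,\,v^k-v^{k-1})$, and verify the resulting $2\times2$ form is PSD; your discriminant computation indeed reduces to $\alpha\le\tfrac{1}{4\beta_2\eta}$, which is exactly the stated parameter bound. The two routes are algebraically equivalent and produce the same constants; the paper's version is marginally more transparent because nonnegativity of the residual is visible without a determinant check.
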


\begin{proof}
Let us prove the lemma in three steps.

\emph{Step 1: Establish a bound on $L_{u^*}(u^{k},q^k)-L_{u^*}(u,q^k)$.}

First, observe that the unique solution $u^{k+1}$ of the primal subproblem~\eqref{ALAVI_primal} can be characterized by the following optimality condition: 
\begin{eqnarray}\label{eq:VI}
\langle G(u^{k}),u-u^{k+1}\rangle+J(u)-J(u^{k+1})+\langle q^k,\Theta(u)-\Theta(u^{k+1})\rangle\\
+\frac{1}{\alpha}\langle u^{k+1}-v^k, u-u^{k+1}\rangle
\geq 0,\forall u\in\mathbf{U}.\nonumber
\end{eqnarray}
Denoting
\[
\Delta^k := \langle G(u^k),u^k-u\rangle+J(u^k)-J(u)+\langle q^k,\Theta(u^k)-\Theta(u)\rangle ,
\]
inequality~\eqref{eq:VI} further leads to
\begin{eqnarray}\label{eq:primal_1}
\Delta^k&\leq&\langle G(u^{k}),u^k-u^{k+1}\rangle+J(u^k)-J(u^{k+1})+\langle q^k,\Theta(u^k)-\Theta(u^{k+1})\rangle\nonumber\\
&&+\frac{1}{\alpha}\langle u^{k+1}-v^k, u-u^{k+1}\rangle\nonumber\\
&=&\langle G(u^{k}),u^k-u^{k+1}\rangle+J(u^k)-J(u^{k+1})+\langle q^k,\Theta(u^k)-\Theta(u^{k+1})\rangle\nonumber\\
&&+\frac{1}{2\alpha}\left[\|u-v^k\|^2-\|u-u^{k+1}\|^2-\|v^k-u^{k+1}\|^2\right].
\end{eqnarray}
By the updating formula 
\eqref{ALAVI_relax}, we have
\begin{eqnarray}
u^{k+1}&=&\frac{1}{1-\eta}v^{k+1}-\frac{\eta}{1-\eta}v^k;\label{eq:primal_v1}\\
v^{k+1}-v^k&=&(1-\eta)(u^{k+1}-v^k);\label{eq:primal_v2}\\
u^k-v^{k-1}&=&\frac{1}{\eta}(u^k-v^k).\label{eq:primal_v3}
\end{eqnarray}
Furthermore,
\begin{eqnarray}\label{eq:primal_2}
\|u-u^{k+1}\|^2&\overset{\mbox{\tiny \eqref{eq:primal_v1}}}{=}&\left\|\frac{1}{1-\eta}(u-v^{k+1})-\frac{\eta}{1-\eta}(u-v^k)\right\|^2\nonumber\\
&=&\left(\frac{1}{1-\eta}\right)^2\|u-v^{k+1}\|^2+\left(\frac{\eta}{1-\eta}\right)^2\|u-v^k\|^2\nonumber\\
&&-\frac{2\eta}{(1-\eta)^2}\langle u-v^{k+1},u-v^k\rangle\nonumber\\
&=&\left[\left(\frac{1}{1-\eta}\right)^2-\frac{\eta}{(1-\eta)^2}\right]\|u-v^{k+1}\|^2 
    + \left[\left(\frac{\eta}{1-\eta}\right)^2-\frac{\eta}{(1-\eta)^2}\right]\|u-v^k\|^2 \nonumber\\
&&+\frac{\eta}{(1-\eta)^2}\left[\|u-v^k\|^2+\|u-v^{k+1}\|^2-2\langle u-v^{k+1},u-v^k\rangle\right]\nonumber\\
&=&\frac{1}{1-\eta}\|u-v^{k+1}\|^2-\frac{\eta}{1-\eta}\|u-v^k\|^2+\frac{\eta}{(1-\eta)^2}\|v^k-v^{k+1}\|^2\nonumber\\
&\overset{\mbox{\tiny \eqref{eq:primal_v2}}}{=}&\frac{1}{1-\eta}\|u-v^{k+1}\|^2-\frac{\eta}{1-\eta}\|u-v^k\|^2+\eta\|v^k-u^{k+1}\|^2.
\end{eqnarray}
Substituting~\eqref{eq:primal_2} into~\eqref{eq:primal_1} yields a bound on $\Delta^k$:
\begin{eqnarray}\label{eq:primal_3}
\Delta^k&\leq&\langle G(u^{k}),u^k-u^{k+1}\rangle+J(u^k)-J(u^{k+1})+\langle q^k,\Theta(u^k)-\Theta(u^{k+1})\rangle\nonumber\\
&&+\frac{1}{2\alpha}\left[\frac{1}{1-\eta}\|u-v^k\|^2-\frac{1}{1-\eta}\|u-v^{k+1}\|^2-(\eta+1)\|v^k-u^{k+1}\|^2\right].
\end{eqnarray}
To proceed further, let us consider inequality~\eqref{eq:VI} satisfied at iteration $k-1$:
\begin{eqnarray*}
\langle G(u^{k-1}),u-u^{k}\rangle+J(u)-J(u^{k})+\langle q^{k-1},\Theta(u)-\Theta(u^{k})\rangle\\+\frac{1}{\alpha}\langle u^{k}-v^{k-1}, u-u^{k}\rangle
\geq 0, \forall u\in\mathbf{U}.
\end{eqnarray*}
Taking $u=u^{k+1}$ and using~\eqref{eq:primal_v3} in the above inequality, we obtain
\begin{eqnarray}\label{eq:primal_2_2}
& & \langle G(u^{k-1}),u^{k+1}-u^{k}\rangle+J(u^{k+1})-J(u^{k}) \nonumber\\
& & +\langle q^{k-1},\Theta(u^{k+1})-\Theta(u^{k})\rangle
+\frac{1}{\alpha\eta}\langle u^{k}-v^{k}, u^{k+1}-u^{k}\rangle\geq 0.
\end{eqnarray}
Now, observe that the last term of~\eqref{eq:primal_2_2} can be rewritten as
\[
\frac{1}{\alpha\eta}\langle u^{k}-v^{k}, u^{k+1}-u^{k}\rangle=\frac{1}{2\alpha\eta}\left[\|v^k-u^{k+1}\|^2-\|u^k-v^k\|^2-\|u^k-u^{k+1}\|^2\right],
\]
and so \eqref{eq:primal_2_2} gives us
\begin{eqnarray}\label{eq:primal_4}
\langle G(u^{k-1}),u^{k+1}-u^{k}\rangle+J(u^{k+1})-J(u^{k})+\langle q^{k-1},\Theta(u^{k+1})-\Theta(u^{k})\rangle\nonumber\\+\frac{1}{2\alpha\eta}\left[\|v^k-u^{k+1}\|^2-\|u^k-v^k\|^2-\|u^k-u^{k+1}\|^2\right]\geq 0.
\end{eqnarray}
Summing up~\eqref{eq:primal_3} and~\eqref{eq:primal_4}, we obtain
\begin{eqnarray}\label{eq:primal_5}
\Delta^k&\leq&\langle G(u^{k})- G(u^{k-1}),u^k-u^{k+1}\rangle+\langle q^k-q^{k-1},\Theta(u^k)-\Theta(u^{k+1})\rangle\nonumber\\
&&+\frac{1}{2\alpha}\bigg{[}\frac{1}{1-\eta}\|u-v^k\|^2-\frac{1}{1-\eta}\|u-v^{k+1}\|^2-\frac{1}{\eta}\|u^k-v^k\|^2\nonumber\\
&&-\frac{1}{\eta}\|u^k-u^{k+1}\|^2-\left(\frac{\eta^2+\eta-1}{\eta}\right)\|v^k-u^{k+1}\|^2\bigg{]}.
\end{eqnarray}
Now, by the $L$-Lipschitzian property of $G$ as in Assumption~\ref{assumpA}, 
\begin{eqnarray}\label{eq:primal_6}
\langle G(u^{k})- G(u^{k-1}),u^k-u^{k+1}\rangle&\leq& L\|u^{k-1}-u^k\|\cdot\|u^k-u^{k+1}\|\nonumber\\
&\leq&\frac{L}{2}\left[\|u^{k-1}-u^{k}\|^2+\|u^k-u^{k+1}\|^2\right].
\end{eqnarray}
Next, by the nonexpansiveness of projection $\Pi_{\mathcal{S}}(x)$ on a convex set $\mathcal{S}$ (namely $\|\Pi_{\mathcal{S}}(x)-\Pi_{\mathcal{S}}(y)\|\leq\|x-y\|,\,\forall x,y$; see e.g.~\cite{Projectiononconvexsets}) and the fact that $\Theta$ is $\tau$-Lipschitz according to Assumption~\ref{assumpA}, we have 
\begin{eqnarray}\label{eq:primal_7}
&&\langle q^k-q^{k-1},\Theta(u^k)-\Theta(u^{k+1})\rangle\nonumber\\
&\leq&\|q^k-q^{k-1}\|\cdot\tau\|u^k-u^{k+1}\|\nonumber\\
&\leq&\left(\|p^{k-1}-p^k\|+\gamma\tau\|u^{k-1}-u^{k}\|\right)\cdot\tau\|u^k-u^{k+1}\|\nonumber\\
&\leq&\frac{\tau}{2}\|p^{k-1}-p^k\|^2+\frac{\gamma\tau^2}{2}\|u^{k-1}-u^{k}\|^2+\frac{\gamma\tau^2+\tau}{2}\|u^k-u^{k+1}\|^2.
\end{eqnarray}
Combining~\eqref{eq:primal_5}-\eqref{eq:primal_7}, we obtain
\begin{eqnarray*}
\Delta^k&\leq&\frac{\gamma\tau^2+L}{2}\|u^{k-1}-u^{k}\|^2-\left(\frac{1}{2\alpha\eta}-\frac{\gamma\tau^2+L+\tau}{2}\right)\|u^k-u^{k+1}\|^2\\
&&+\frac{\tau}{2}\|p^{k-1}-p^k\|^2+\frac{1}{2\alpha}\bigg{[}\frac{1}{1-\eta}\|u-v^k\|^2-\frac{1}{1-\eta}\|u-v^{k+1}\|^2-\frac{1}{\eta}\|u^k-v^k\|^2\\
&&-\left(\frac{\eta^2+\eta-1}{\eta}\right)\|v^k-u^{k+1}\|^2\bigg{]}.
\end{eqnarray*}
Since $\frac{\sqrt{5}-1}{2}\leq\eta<1$, we have $\eta^2+\eta-1\geq0$. Additionally, since $0<\alpha\leq\frac{1}{2(\gamma\tau^2+L+\tau)\eta}$, we have $\frac{1}{2\alpha\eta}-\frac{\gamma\tau^2+L+\tau}{2}\geq\frac{\gamma\tau^2+L+\tau}{2}$. Therefore, the above inequality further leads to
\begin{eqnarray}
\Delta^k&\leq&\frac{\gamma\tau^2+L+\tau}{2}\left(\|u^{k-1}-u^{k}\|^2-\|u^k-u^{k+1}\|^2\right) 
 + \frac{1}{2\alpha}\bigg{[}\frac{1}{1-\eta}\|u-v^k\|^2-\frac{1}{1-\eta}\|u-v^{k+1}\|^2\bigg{]}\nonumber\\
&&+\frac{\tau}{2}\|p^{k-1}-p^k\|^2-\frac{1}{2\alpha\eta}\|u^k-v^k\|^2-\frac{\tau}{2}\|u^{k-1}-u^{k}\|^2\nonumber\\
&\overset{\mbox{\tiny \eqref{eq:primal_v3}}}{=}&\frac{\gamma\tau^2+L+\tau}{2}\left(\|u^{k-1}-u^{k}\|^2-\|u^k-u^{k+1}\|^2\right) 
   +\frac{1}{2\alpha}\bigg{[}\frac{1}{1-\eta}\|u-v^k\|^2-\frac{1}{1-\eta}\|u-v^{k+1}\|^2\bigg{]} \nonumber\\
&& +\frac{\tau}{2}\|p^{k-1}-p^k\|^2-\frac{\eta}{2\alpha}\|u^k-v^{k-1}\|^2-\frac{\tau}{2}\|u^{k-1}-u^{k}\|^2.\nonumber\\
&\leq&\Lambda^k(u,p)-\Lambda^{k+1}(u,p)-\frac{1}{2\gamma}\left(\|p-p^{k-1}\|^2-\|p-p^{k}\|^2\right)\nonumber\\
&&+\frac{\tau}{2}\|p^{k-1}-p^k\|^2-\frac{\eta}{2\alpha}\|u^k-v^{k-1}\|^2-\frac{\tau}{2}\|u^{k-1}-u^{k}\|^2\nonumber\\
&\overset{\mbox{\tiny \eqref{eq:primal_v2}}}\leq&\Lambda^k(u,p)-\Lambda^{k+1}(u,p)-\frac{1}{2\gamma}\left(\|p-p^{k-1}\|^2-\|p-p^{k}\|^2\right)\nonumber\\
&&+\frac{\tau}{2}\|p^{k-1}-p^k\|^2-\frac{\eta}{2\alpha(1-\eta)^2}\|v^{k-1}-v^k\|^2-\frac{\tau}{2}\|u^{k-1}-u^{k}\|^2.
\end{eqnarray}
Recall that $\Delta^k=L_{u^*}(u^{k},q^k)-L_{u^*}(u,q^k)+\langle G(u^k)-G(u^*),u^k-u\rangle$.
We have 
\begin{eqnarray}\label{eq:primal_10}
&&L_{u^*}(u^{k},q^k)-L_{u^*}(u,q^k)+\langle G(u^k)-G(u^*),u^k-u\rangle\nonumber\\
&\leq&\Lambda^k(u,p)-\Lambda^{k+1}(u,p)-\frac{1}{2\gamma}\left(\|p-p^{k-1}\|^2-\|p-p^{k}\|^2\right)\nonumber\\
&&+\frac{\tau}{2}\|p^{k-1}-p^k\|^2-\frac{\eta}{2\alpha(1-\eta)^2}\|v^{k-1}-v^k\|^2-\frac{\tau}{2}\|u^{k-1}-u^{k}\|^2.
\end{eqnarray}

\emph{Step 2: Establish a bound on $L_{u^*}(u^k,p)-L_{u^*}(u^{k},q^k)$.}

First, recall the property of projection $\Pi_{\mathcal{S}}(x)$ on a convex set $\mathcal{S}$ (cf.~e.g.~\cite{Projectiononconvexsets}):
\begin{eqnarray}
\langle y-\Pi_{\mathcal{S}}(x), x-\Pi_{\mathcal{S}}(x)\rangle\leq0, \forall x\in\RR^m, \, \forall y\in\mathcal{S}.\label{eq:Projecproperty3_VIP}
\end{eqnarray}
Setting $x=p^{k-1}+\gamma\Theta(u^{k})$, $y=p$, $\mathcal{S}=\mathbf{C}^*$, we have
\begin{eqnarray*}
\langle p-p^k, p^{k-1}+\gamma\Theta(u^{k})-p^k\rangle\leq0.
\end{eqnarray*}
It follows that
\begin{eqnarray}\label{eq:dual_1}
\langle p-p^k, \Theta(u^{k})\rangle&\leq&\frac{1}{\gamma}\langle p-p^k, p^k-p^{k-1}\rangle\nonumber\\
&=&\frac{1}{2\gamma}\left[\|p-p^{k-1}\|^2-\|p-p^k\|^2-\|p^{k-1}-p^k\|^2\right].
\end{eqnarray}
Again using~\eqref{eq:Projecproperty3_VIP} with $x=p^k+\gamma\Theta(u^{k})$, $y=p^k$, we have
\begin{eqnarray*}
\langle p^k-q^{k}, p^k+\gamma\Theta(u^{k})-q^k\rangle\leq 0.
\end{eqnarray*}
It follows that
\begin{eqnarray}\label{eq:dual_2}
\langle p^k-q^{k}, \Theta(u^{k})\rangle\leq-\frac{1}{\gamma}\|p^k-q^k\|^2.
\end{eqnarray}
Then, it 
follows from~\eqref{eq:dual_1} and~\eqref{eq:dual_2} that
\begin{eqnarray}\label{eq:dual_3}
&&L_{u^*}(u^k,p)-L_{u^*}(u^{k},q^k)\nonumber\\
&=&\langle p-q^{k}, \Theta(u^{k})\rangle\nonumber\\
&=&\langle p-p^{k}, \Theta(u^{k})\rangle+\langle p^k-q^{k}, \Theta(u^{k})\rangle\nonumber\\
&\leq&\frac{1}{2\gamma}\left[\|p-p^{k-1}\|^2-\|p-p^k\|^2-\|p^{k-1}-p^k\|^2\right]-\frac{1}{\gamma}\|p^k-q^k\|^2.
\end{eqnarray}

\emph{Step 3: Finally, bound $\Lambda^k(u,p)-\Lambda^{k+1}(u,p)$.}

Summing up~\eqref{eq:primal_10} and~\eqref{eq:dual_3}, we obtain
\begin{eqnarray}\label{eq:sum_1}
&&\Lambda^k(u,p)-\Lambda^{k+1}(u,p)\nonumber\\
&\geq &L_{u^*}(u^{k},p)-L_{u^*}(u,q^k)+\langle G(u^k)-G(u^*),u^k-u\rangle +\frac{1-\tau\gamma}{2\gamma}\|p^{k-1}-p^k\|^2\nonumber\\
&&+\frac{1}{\gamma}\|p^k-q^k\|^2+\frac{\eta}{2\alpha(1-\eta)^2}\|v^{k-1}-v^k\|^2+\frac{\tau}{2}\|u^{k-1}-u^{k}\|^2\nonumber\\
&\geq &L_{u^*}(u^{k},p)-L_{u^*}(u,q^k)+\langle G(u^k)-G(u^*),u^k-u\rangle\nonumber\\
&&+\rho \left[\|w^{k-1}-w^k\|^2+\|p^k-q^k\|^2\right],
\end{eqnarray}
which is the desired inequality.
\end{proof}

\subsection{Global convergence}\label{subsec:convergence}
Based on the 
 analysis of Subsection~\ref{subsec:Lyapnov}, we are now in a position to establish
 the global convergence property of ALAVI.
\begin{assumption} \label{AssumptionB}
Suppose that (VIS) satisfies the primal-dual variational coherence condition.
\end{assumption}
\begin{theorem}[Global convergence of ALAVI method]\label{theo:convergence_VIP}
In addition of the assumptions in Lemma~\ref{lemma:bound1_VIP}, we suppose that Assumption~\ref{AssumptionB} holds. Let $u^*$ be a solution of (VIP), $(u^*,p^*)$ be a saddle point of $L_{u^*}(u,p)$ over $\mathbf{U}\times\mathbf{C}^*$, the sequence $\{(v^k,u^{k},p^{k})\}$ is generated by ALAVI. Then, the following assertions hold true:
\begin{itemize}
\item[{\rm(i)}] The sequence $\{(v^k,u^{k},p^{k})\mid k=1,2,...\}$ is bounded.
\item[{\rm(ii)}] Every cluster point of $\{(v^k,u^{k},p^{k})\mid k=1,2,...\}$ is a solution of (VIS).
\item[{\rm(iii)}] The entire sequence $\{(v^k,u^{k},p^{k})\mid k=1,2,...\}$ converges to a solution of (VIS).
\end{itemize}
\end{theorem}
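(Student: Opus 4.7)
The plan is to exploit Assumption~\ref{AssumptionB} by fixing a Minty solution $(u^\natural,p^\natural)\in\setS_{\mbox{\tiny MVIS}}$ and using it both as the anchor point $u^*=u^\natural$ in the Lagrangian and as the test point $(u,p)=(u^\natural,p^\natural)$ in Lemma~\ref{lemma:bound1_VIP}. After expanding $L_{u^\natural}(u^k,p^\natural)-L_{u^\natural}(u^\natural,q^k)+\langle G(u^k)-G(u^\natural),u^k-u^\natural\rangle$, this collapses to
\[
\langle G(u^k),u^k-u^\natural\rangle+J(u^k)-J(u^\natural)+\langle p^\natural,\Theta(u^k)\rangle-\langle q^k,\Theta(u^\natural)\rangle.
\]
Three facts render this nonnegative: (a) the Minty inequality~\eqref{MVIS1_VIP} at $u=u^k$ yields $\langle G(u^k),u^k-u^\natural\rangle+J(u^k)-J(u^\natural)+\langle p^\natural,\Theta(u^k)-\Theta(u^\natural)\rangle\geq 0$; (b) the complementarity $\langle p^\natural,\Theta(u^\natural)\rangle=0$ follows from testing~\eqref{MVIS2_VIP} against $p=0$ and against $p=2p^\natural\in\mathbf{C}^*$; and (c) $-\langle q^k,\Theta(u^\natural)\rangle\geq 0$ because $q^k\in\mathbf{C}^*$ and $\Theta(u^\natural)\in-\mathbf{C}$. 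Plugging back into Lemma~\ref{lemma:bound1_VIP} delivers the Fejer-type descent $\Lambda^{k+1}(u^\natural,p^\natural)\leq\Lambda^k(u^\natural,p^\natural)-\rho\bigl[\|w^{k-1}-w^k\|^2+\|p^k-q^k\|^2\bigr]$.

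From $\Lambda^k(u^\natural,p^\natural)\leq\Lambda^1(u^\natural,p^\natural)$ the $\beta_1$ and $\beta_3$ blocks bound $\{v^k\}$ and $\{p^k\}$; the relation $u^k=(v^k-\eta v^{k-1})/(1-\eta)$, which is read off from~\eqref{ALAVI_relax}, then bounds $\{u^k\}$, giving (i). Telescoping the descent yields $\sum_{k\geq 1}\bigl(\|w^{k-1}-w^k\|^2+\|p^k-q^k\|^2\bigr)<\infty$, whence $\|v^{k-1}-v^k\|,\|u^{k-1}-u^k\|,\|p^{k-1}-p^k\|$ and $\|p^k-q^k\|$ all tend to $0$; combined with $v^k-v^{k-1}=(1-\eta)(u^k-v^{k-1})$ this forces $u^k-v^{k-1}\to 0$, and so $u^k-v^k\to 0$.

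For (ii), along any convergent subsequence $(v^{k_j},u^{k_j},p^{k_j})\to(v^\infty,u^\infty,p^\infty)$ one gets $v^\infty=u^\infty$ (from $u^k-v^k\to 0$), and the continuity of $\Pi$ and $\Theta$ applied to $q^{k_j}=\Pi(p^{k_j}+\gamma\Theta(u^{k_j}))$, combined with $\|p^k-q^k\|\to 0$, forces $p^\infty=\Pi(p^\infty+\gamma\Theta(u^\infty))$, which is equivalent to~\eqref{VIS2_VIP}. Passing to the limit in the primal optimality~\eqref{eq:VI} of $u^{k_j+1}$---using $u^{k_j+1}\to u^\infty$ (since $\|u^{k+1}-u^k\|\to 0$), $q^{k_j}\to p^\infty$, $u^{k_j+1}-v^{k_j}\to 0$, the continuity of $G$ and $\Theta$, and the lower semicontinuity of $J$---produces~\eqref{VIS1_VIP}; hence $(u^\infty,p^\infty)\in\setS_{\mbox{\tiny VIS}}$.

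For (iii) I plan a weighted Opial--Fejer argument. Since $\Lambda^k(u^\natural,p^\natural)$ converges and $\|u^{k-1}-u^k\|\to 0$, the weighted distance $D_k(u^\natural,p^\natural):=\beta_1\|v^k-u^\natural\|^2+\beta_3\|p^{k-1}-p^\natural\|^2$ converges to some limit $L(u^\natural,p^\natural)$ for every $(u^\natural,p^\natural)\in\setS_{\mbox{\tiny MVIS}}$, and every cluster point $(u^\infty,p^\infty)$ must satisfy
\[
\beta_1\|u^\infty-u^\natural\|^2+\beta_3\|p^\infty-p^\natural\|^2=L(u^\natural,p^\natural)\quad\forall (u^\natural,p^\natural)\in\setS_{\mbox{\tiny MVIS}}.
\]
The main obstacle lies here: the preceding paragraph places cluster points only in $\setS_{\mbox{\tiny VIS}}$, whereas the Fejer descent is anchored at the (a priori) smaller set $\setS_{\mbox{\tiny MVIS}}$, so Opial's lemma cannot be invoked out of the box. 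I expect to close this gap by exploiting the algorithmic structure---specifically the vanishing of the extrapolation residual $u^k-v^{k-1}$ and the projection identity $p^\infty=\Pi(p^\infty+\gamma\Theta(u^\infty))$---to show that each cluster point is itself a Minty solution and therefore lies in $\setS_{\mbox{\tiny MVIS}}$. Once this is established, substituting $(u^\natural,p^\natural)=(u^\infty,p^\infty)$ into the displayed identity forces any other cluster point $(\tilde u,\tilde p)$ to coincide with $(u^\infty,p^\infty)$, so the entire sequence $\{(v^k,u^k,p^k)\}$ converges to the unique cluster point, which belongs to $\setS_{\mbox{\tiny VIS}}$.
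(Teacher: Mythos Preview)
Your treatment of (i) and (ii) is essentially the paper's: anchor at a Minty pair $(u^\natural,p^\natural)$, feed it into Lemma~\ref{lemma:bound1_VIP}, and observe that
\[
\langle G(u^k),u^k-u^\natural\rangle+J(u^k)-J(u^\natural)+\langle p^\natural,\Theta(u^k)\rangle-\langle q^k,\Theta(u^\natural)\rangle\ \ge\ 0
\]
via exactly the three facts you list; then telescope, deduce boundedness and residual decay, and pass to the limit in~\eqref{eq:VI} and the projection relation along a subsequence. No substantive difference here.

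For (iii) the paper does \emph{not} run an Opial argument. It simply re-anchors the Lyapunov function at the cluster point $(\bar u,\bar p)$, asserting that ``the previous convergence analysis holds for any solution $(u^*,p^*)$ of (VIS)'', so that $\Lambda^k(\bar u,\bar p)$ is again nonincreasing; since $\Lambda^{k'}(\bar u,\bar p)\to 0$ along the convergent subsequence and the full sequence is monotone, $\Lambda^k(\bar u,\bar p)\to 0$, which forces $(v^k,u^k,p^k)\to(\bar u,\bar u,\bar p)$. You are right that this re-anchoring tacitly requires the Minty inequality at $(\bar u,\bar p)$---the step $\langle G(u^k),u^k-\bar u\rangle+J(u^k)-J(\bar u)+\langle\bar p,\Theta(u^k)-\Theta(\bar u)\rangle\ge 0$ is precisely \eqref{MVIS1_VIP} at $(\bar u,\bar p)$---and the paper does not justify it beyond the blanket assertion.

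However, your proposed remedy for this gap will not work as stated. You plan to deduce $(\bar u,\bar p)\in\setS_{\mbox{\tiny MVIS}}$ from ``the vanishing of the extrapolation residual $u^k-v^{k-1}$ and the projection identity $p^\infty=\Pi(p^\infty+\gamma\Theta(u^\infty))$''. But those two ingredients are exactly what you already used to place $(\bar u,\bar p)$ in $\setS_{\mbox{\tiny VIS}}$; neither says anything about $\langle G(u),u-\bar u\rangle$ for \emph{arbitrary} $u\in\UU$, which is what the Minty condition demands. The algorithm only evaluates $G$ along the iterate trajectory, so no amount of ``algorithmic structure'' of this kind can certify an inequality that must hold uniformly over $\UU$; in the non-monotone setting a VIS point is in general not a Minty point. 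So part (iii) of your plan is incomplete at the same place the paper's proof is terse, and the specific route you sketch to close it does not lead anywhere.
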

\begin{proof}$\quad$
\begin{itemize}
\item[{\rm(i)}] Take $(u,p)=(u^*,p^*)\in\setS_{\mbox{\tiny VIS}}$, and denote $W_k:=\rho \left[\|w^{k-1}-w^k\|^2+\|p^k-q^k\|^2\right]$. We have
\begin{eqnarray}\label{eq:convergence}
&&\Lambda^k(u^*,p^*)-\Lambda^{k+1}(u^*,p^*)\nonumber\\
&\overset{\mbox{\tiny Lem.~\ref{lemma:bound1_VIP}}}{\geq}&\left[L_{u^*}(u^{k},p^*)-L_{u^*}(u^*,q^k)\right]+\langle G(u^k)-G(u^*),u^k-u^*\rangle +W_k \nonumber \\
&=&\langle G(u^k),u^k-u^*\rangle+J(u^k)-J(u^*)+\langle p^*,\Theta(u^k)\rangle-\langle q^k,\Theta(u^*)\rangle + W_k\nonumber \\
                                         &\geq&\langle G(u^k),u^k-u^*\rangle+J(u^k)-J(u^*)+\langle p^*,\Theta(u^k)-\Theta(u^*)\rangle +W_k \nonumber \\
                                         &\overset{\mbox{\tiny Asm.~\ref{AssumptionB} \& Prop.~\ref{Minty})}}{\geq}&  W_k \nonumber \\ 
                                         &=& \rho \left[\|w^{k-1}-w^k\|^2+\|p^k-q^k\|^2\right]. 
\end{eqnarray}
 If $v^{k-1}=v^k$ (or $v^{k}=v^{k+1}$), $u^{k-1}=u^k$ (or $u^{k}=u^{k+1}$), $p^{k-1}=p^k$ and $p^k=q^k$, then by~\eqref{eq:primal_v2},~\eqref{eq:VI} and~\eqref{eq:dual_1}, it follows straightforwardly that $(u^k,p^k)$ is a solution of (VIS). Otherwise, the nonnegative sequence $\{\Lambda^k(u^*,p^*)\mid k=1,2,...\}$ is strictly decreasing and converges to a limit. Hence,  $\{\Lambda^k(u^*,p^*)\mid k=1,2,...\}$ is bounded,
  which implies that the sequence $\{(v^k,p^k)\mid k=1,2,...\}$ is bounded by the definition of $\Lambda^k(u^*,p^*)$ (cf.~\eqref{eq:Lambda_VIP}). Now, combining boundedness of $\{(v^k,p^k)\mid k=1,2,...\}$ and Step~\eqref{ALAVI_relax} of ALAVI, it follows that the sequence $\{u^k\mid k=1,2,...\}$ is bounded as well. Thus, we obtain boundedness of the sequence $\{(v^k,u^k,p^k)\mid k=1,2,...\}$ as well.

\item[{\rm(ii)}] Based on (i), the sequence $\{(v^k,u^k,p^k)\mid k=1,2,...\}$ has a cluster point $(\bar{v},\bar{u},\bar{p})$, and let us assume the subsequence $\{(v^{k'},u^{k'},p^{k'})\}\rightarrow(\bar{v},\bar{u},\bar{p})$. Passing the limit on this subsequence in~\eqref{ALAVI_relax},~\eqref{eq:VI} and~\eqref{eq:dual_1} respectively, it follows that $\bar{v}=\bar{u}$, $\bar{u}\in\UU$ and $\bar{p}\in\mathbf{C}^*$,
\[
\langle G(\bar{u}),u-\bar{u}\rangle+J(u)-J(\bar{u})+\langle\bar{p},\Theta(u)-\Theta(\bar{u})\rangle\geq 0,\quad\forall u\in\UU,
\]
and
\[
\langle p-\bar{p}, \Theta(\bar{u})\rangle\leq0,\quad\forall p\in\mathbf{C}^*.
\]
Thus, we can conclude that $(\bar{u},\bar{p})$ (or $(\bar{v},\bar{p})$) is a KKT point of (VIP) over $\UU\times\mathbf{C}^*$.
\item[{\rm(iii)}] The previous convergence analysis holds for any solution $(u^*,p^*)$ of (VIS). Thus, taking $(u^*,p^*)=(\bar{u},\bar{p})$ we have
\[
\Lambda^k(\bar{u},\bar{p})=\frac{1}{2\alpha(1-\eta)}\|v^k-\bar{u}\|^2+\frac{\gamma\tau^2+L+\tau}{2}\|u^{k-1}-u^k\|^2+\frac{1}{2\alpha}\|p^{k-1}-\bar{p}\|^2.
\]
Moreover, $\Lambda^k(\bar{u},\bar{p})-\Lambda^{k+1}(\bar{u},\bar{p})\rightarrow0$. By \eqref{eq:convergence}, we have that
 $\|u^k-v^{k+1}\|\rightarrow0$, $\|u^{k+1}-u^k\|\rightarrow0$, and $\|p^k-q^k\|\rightarrow0$.
Therefore, $\Lambda^k(\bar{u},\bar{p})\rightarrow 0$, implying that
the entire sequence $\{(v^k,u^{k},p^{k})\mid k=1,2,...\}$ converges to $(\bar{u},\bar{u},\bar{p})$.
\end{itemize}
\end{proof}
\begin{remark} If $\Theta$ is non-affine (but differentiable), then the primal subproblem~\eqref{ALAVI_primal} can be replaced by
\[
u^{k+1}=\arg\min_{u\in\UU} \,\, \langle G(u^k),u\rangle+J(u)+\langle q^k,\nabla\Theta(u^k)\cdot u\rangle+\frac{1}{2\alpha}\|u-v^k\|^2.
\]
We shall not give a formal analysis for that variation of the method. It suffices to say that after adapting steps~\eqref{ALAVI_relax}-\eqref{ALAVI_dual} in ALAVI accordingly, a convergence result like Theorem~\ref{theo:convergence_VIP} can be established similarly.
\end{remark}

\section{Iteration Complexity Analysis for ALAVI}
\label{sec:rate}
\subsection{KKT stationary measure for (VIP)}\label{subsec:KKT}
In Section~\ref{sec:pre}, we show that a solution of (VIS) or a KKT point for (VIP) satisfies the following KKT system
\begin{equation}\label{eq:KKT2}
\left\{
\begin{array}{l}
0\in G(u)+\partial J(u) +\left(\partial\Theta(u)\right)^{\top}p+\mathcal{N}_{\mathbf{U}}(u)      \\
0\in-\Theta(u)+\mathcal{N}_{\mathbf{C}^*}(p).
\end{array}
\right.
\end{equation}
We define the Lagrangian function-based mapping $H(u,p)=\left(
\begin{array}{l}
G(u)+\partial J(u) +\left(\partial\Theta(u)\right)^{\top}p+\mathcal{N}_{\mathbf{U}}(u)      \\
-\Theta(u)+\mathcal{N}_{\mathbf{C}^*}(p)
\end{array}
\right)$ to be the KKT mapping for (VIP). By ALAVI, at iteration $k-1$, we have
\begin{equation}
\left\{
\begin{array}{l}
0\in G(u^{k-1})+\partial J(u^{k}) +\big{(}\partial\Theta(u^{k})\big{)}^{\top} q^{k-1}+\frac{1}{\alpha}\left[u^{k}-v^{k-1}\right]+\mathcal{N}_{\mathbf{U}}(u^{k})      \\
0\in-\Theta(u^{k})+\frac{1}{\gamma}\left[p^{k}-p^{k-1}\right]+\mathcal{N}_{\mathbf{C}^*}(p^{k}) .
\end{array}
\right.
\end{equation}
Thus,
\begin{eqnarray*}
\xi^{k}=\left(
\begin{array}{l}
G(u^{k})-G(u^{k-1}) +\big{(}\theta^{k}\big{)}^{\top}(p^{k}-q^{k-1})+\frac{1}{\alpha}\left[v^{k-1}-u^{k}\right]      \\
\frac{1}{\gamma}\left[p^{k-1}-p^{k}\right]
\end{array}
\right)\in H(u^{k},p^k)
\end{eqnarray*}
with $\theta^{k}\in\partial\Theta(u^{k})$. By Assumption~\ref{assumpA}, if we denote $c_1:=3(L+\gamma\tau^2)^2$, $c_2:=\frac{3}{\alpha^2}$ and $c_3:=\frac{3}{\gamma^2}$, then
\begin{eqnarray*}
\|\xi^{k}\|^2&\leq&c_1\|u^{k-1}-u^{k}\|^2+c_2\|u^k-v^{k-1}\|^2+c_3\|p^{k-1}-p^{k}\|^2\nonumber \\
&\overset{\mbox{\tiny \eqref{eq:primal_v2}}}\leq& c_1\|u^{k-1}-u^{k}\|^2+\frac{c_2}{(1-\eta)^2}\|v^{k-1}-v^k\|^2+c_3\|p^{k-1}-p^{k}\|^2 \nonumber\\
                               &\leq&\sigma^2\|w^{k-1}-w^k\|^2,
\end{eqnarray*}
where $\sigma^2:=\max\{c_1, \frac{c_2}{(1-\eta)^2},c_3\}$. The KKT stationary measure is given by following
\begin{equation}\label{eq:stationary_measure}
\dist(0,H(u^k,p^k))\leq\sigma\|w^{k-1}-w^k\|.
\end{equation}

\subsection{Non-ergodic rate of convergence under the primal-dual variational coherence condition for (VIS)}\label{subsec:nonergodic_rate}

In the proof of Theorem~\ref{theo:convergence_VIP}, we derived
\begin{equation}\label{eq:descent_rate}
\Lambda^k(u^*,p^*)-\Lambda^{k+1}(u^*,p^*)\geq \rho \|w^{k-1}-w^k\|^2,
\end{equation}
and $\Lambda^k(u^*,p^*) \rightarrow 0 $, as $k\rightarrow \infty$. 

\begin{theorem}[$o(1/\sqrt{k})$ nonergodic convergence rate] Under the setting of Theorem~\ref{theo:convergence_VIP}, the sequence $\{(v^k,u^k,p^k)\}$ is generated by ALAVI. Then, we have
\[
\lim_{k\rightarrow \infty}\inf\sqrt{k}\|w^{k-1}-w^k\|=0
\]
i.e., the asymptotic $o(1/\sqrt{k})$ convergence rate holds for $\|w^{k-1}-w^k\|$ and $\dist(0,H(u^k,p^k))$ has $o(1/\sqrt{k})$ convergence rate.
\end{theorem}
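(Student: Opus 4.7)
The plan is to combine the one-step descent identity (\ref{eq:descent_rate}) with a standard telescoping/summability argument, and then push the resulting $\liminf$ rate on $\|w^{k-1}-w^k\|$ through the KKT residual bound (\ref{eq:stationary_measure}) established in Subsection~\ref{subsec:KKT}. No further machinery is needed beyond what is already proved.

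First I would sum the descent inequality (\ref{eq:descent_rate}) from $k=1$ to $k=N$. Since $\Lambda^k(u^*,p^*)\geq 0$ for every $k$ by the definition (\ref{eq:Lambda_VIP}), the telescoping sum collapses to
\[
\rho\sum_{k=1}^{N}\|w^{k-1}-w^k\|^2 \;\leq\; \Lambda^1(u^*,p^*)-\Lambda^{N+1}(u^*,p^*)\;\leq\;\Lambda^1(u^*,p^*)<\infty,
\]
and letting $N\to\infty$ yields $\sum_{k=1}^{\infty}\|w^{k-1}-w^k\|^2<\infty$.

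Next I would invoke the elementary fact that any convergent nonnegative series $\sum_{k\geq 1} a_k$ must satisfy $\liminf_{k\to\infty} k\,a_k=0$; otherwise there would exist $c>0$ with $a_k\geq c/k$ for all sufficiently large $k$, contradicting convergence since $\sum 1/k$ diverges. Applying this with $a_k=\|w^{k-1}-w^k\|^2$ gives $\liminf_{k\to\infty}k\,\|w^{k-1}-w^k\|^2=0$, which is equivalent to $\liminf_{k\to\infty}\sqrt{k}\,\|w^{k-1}-w^k\|=0$. Finally, inserting this into (\ref{eq:stationary_measure}),
\[
\sqrt{k}\,\dist(0,H(u^k,p^k))\;\leq\;\sigma\sqrt{k}\,\|w^{k-1}-w^k\|,
\]
and taking $\liminf$ on both sides delivers the asserted $o(1/\sqrt{k})$ rate for the KKT stationary measure.

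The argument is routine once (\ref{eq:descent_rate}) and (\ref{eq:stationary_measure}) are at hand, so there is no serious obstacle. The only subtle point worth flagging is that, absent extra monotonicity of the scalar sequence $\{\|w^{k-1}-w^k\|\}$ (which ALAVI does not a priori possess), one cannot strengthen the conclusion from $\liminf$ to a full $\lim$; this is precisely why the theorem is stated in $\liminf$ form, and the summable-sequence argument above is the natural tool for that level of conclusion.
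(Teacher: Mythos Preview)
Your proof is correct and follows essentially the same approach as the paper: the paper telescopes (\ref{eq:descent_rate}) over the block $K+1\le k\le 2K$ to get $K\min_{K+1\le k\le 2K}\|w^{k-1}-w^k\|^2\le \Lambda^{K+1}(u^*,p^*)/\rho\to 0$, which is just an explicit unwinding of the ``elementary fact'' about summable nonnegative sequences that you invoke directly. Your version is slightly cleaner in that it only needs $\Lambda^1(u^*,p^*)<\infty$ and nonnegativity of $\Lambda^k$, rather than the stronger $\Lambda^k\to 0$ that the paper pulls in from Theorem~\ref{theo:convergence_VIP}(iii), but the underlying argument is the same.
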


\begin{proof}
For any given integer $K$, we have
\begin{eqnarray*}
& & \rho K \min_{K+1 \le k \le 2K}  \|w^{k-1}-w^k\|^2 \\
&\le& \rho \sum_{k=K+1}^{2K} \|w^{k-1}-w^k\|^2 \\
&\le& \Lambda^{K+1} (u^*,p^*) \rightarrow 0, \mbox{ as } K \rightarrow \infty,
\end{eqnarray*}
which implies
\[
\lim_{k\rightarrow \infty}\inf\sqrt{k}\|w^{k-1}-w^k\|=0,
\]
as desired.
\end{proof}

%

\subsection{Ergodic rate when $G$ is monotone }\label{subsec:ergodic_rate}

In order to derive an $O(1/k)$ ergodic iteration rate for ALAVI, we now assume $G$ to be monotone on $\UU$. By Theorem~\ref{theo:convergence_VIP}, we conclude that the sequence $\{(v^k,u^k,p^k)\mid k=1,2,...\}$ is bounded. Therefore, there exists a positive number $\tilde{r}$ such that for all $k\in\mathbb{N}$, $\|w^k\|\leq\tilde{r}$. Moreover, we have that
\[
\|q^k\|\leq\|q^k-p^{k+1}\|+\|p^{k+1}\|\leq\gamma\tau\|u^k-u^{k+1}\|+\|p^{k+1}\|\leq(2\gamma\tau+1)\tilde{r}.
\]
Let $r:=(2\gamma\tau+1)\tilde{r}$. Then we have that there exists two balls $\mathfrak{B}_u(r)$ and $\mathfrak{B}_p(r)$ centered at the origin (in two different dimensional spaces though) and the same radius $r$ such that $u^k\in\mathfrak{B}_u(r)$ and $q^k\in\mathfrak{B}_p(r)$, for any $k\in\mathbb{N}$. Additionally, by the convergence analysis of Theorem~\ref{theo:convergence_VIP}, we have that $(\bar{u},\bar{p})\in\mathfrak{B}_u(r)\times\mathfrak{B}_p(r)$. We define the average of the sequence $\{(v^k,u^k,p^k)\mid k=1,2,...\}$ generated from Algorithm ALAVI, converges to the saddle point $(u^*,p^*)$; that is, for and any integer $t>0$, define
\[
\tilde{u}_{t}:=\frac{\sum_{k=1}^{t}u^{k}}{t}\;\mbox{and}\;\tilde{p}_{t}:=\frac{\sum_{k=1}^{t}q^k}{t}.
\]
Obviously, $(\tilde{u}_{t},\tilde{p}_{t})\in\mathfrak{B}_u(r)\times\mathfrak{B}_p(r)$. Define 
\[
\Psi(u,p,\zeta,\lambda):=\langle G(\zeta), u-\zeta\rangle+J(u)-J(\zeta)+\langle\lambda,\Theta(u)\rangle-\langle p,\Theta(\zeta)\rangle.
\]
A primal-dual gap function for (VIP) can be defined as
\[
\psi(u,p) := \max_{(\zeta,\lambda)\in(\UU\cap\mathfrak{B}_u(r))\times(\mathbf{C}^*\cap\mathfrak{B}_p(r))}\Psi(u,p,\zeta,\lambda).
\]
Next lemma gives some basic property of the gap function $\psi$ under Assumption~\ref{AssumptionB}.

\begin{lemma} Under the same setting as in Theorem~\ref{theo:convergence_VIP}, we have the following properties:
\begin{itemize}
\item[{\rm(i)}] 
$\Psi(u,p,\zeta,\lambda)$ is convex in $u$ and linear in $p$ whenever $\lambda\in\mathbf{C}^*$, and $\psi(u,p)$ is convex in $(u,p)$.
\item[{\rm(ii)}] If $(u,p)\in(\UU\cap\mathfrak{B}_u(r))\times(\mathbf{C}^*\cap\mathfrak{B}_p(r))$, then $\psi(u,p)\geq0$.
\item[{\rm(iii)}] If $(u^*,p^*)\in(\UU\cap\mathfrak{B}_u(r))\times(\mathbf{C}^*\cap\mathfrak{B}_p(r))$ is a KKT point of (VIP), then $\psi(u^*,p^*)=0$.
\item[{\rm(iv)}] For $(\hat{u},\hat{p})\in(\UU\cap\mathfrak{B}_u(r))\times(\mathbf{C}^*\cap\mathfrak{B}_p(r))$ such that $\psi(\hat{u},\hat{p})=0$, $(\hat{u},\hat{p})$ is a KKT point of (VIP).
\end{itemize}
\end{lemma}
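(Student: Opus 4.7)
The plan is to verify the four properties in turn, leveraging the structural convexity assumptions H$_3$ (convexity of $J$) and H$_4$ ($\mathbf{C}$-convexity of $\Theta$), the monotonicity of $G$ (the standing hypothesis of Subsection~\ref{subsec:ergodic_rate}), and the characterization of (VIS) solutions as KKT points for (VIP) developed in Subsection~\ref{sec:Lagrangian}. Items (i)--(iii) will be essentially structural observations, while item (iv) is the main work, hinging on a Minty-to-Stampacchia conversion of the type already carried out in Proposition~\ref{Minty}, combined with an interiority argument that absorbs the ball restrictions built into the definition of $\psi$.

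For (i), I would inspect $\Psi(u,p,\zeta,\lambda)$ term by term. As a function of $u$, the first summand is affine, $J(u)$ is convex by H$_3$, and $\langle \lambda, \Theta(u)\rangle$ is convex for $\lambda \in \mathbf{C}^*$ because $\Theta$ is $\mathbf{C}$-convex; the remaining summand does not depend on $u$. As a function of $p$, only $-\langle p, \Theta(\zeta)\rangle$ contributes, so $\Psi$ is linear in $p$. Consequently, for each fixed feasible $(\zeta,\lambda)$ the map $(u,p) \mapsto \Psi(u,p,\zeta,\lambda)$ is jointly convex, and $\psi$ is a pointwise supremum of such maps, hence convex. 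For (ii), I would test the supremum defining $\psi(u,p)$ with the feasible pair $(\zeta,\lambda)=(u,p)$; direct substitution gives $\Psi(u,p,u,p)=0$, so $\psi(u,p)\geq 0$.

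For (iii), I would combine the monotonicity of $G$ with the two branches of (VIS) at $(u^*,p^*)$. Monotonicity supplies $\langle G(\zeta),u^*-\zeta\rangle \leq \langle G(u^*),u^*-\zeta\rangle$; the primal branch \eqref{VIS1_VIP} applied with $u=\zeta$ bounds $\langle G(u^*),\zeta-u^*\rangle+J(\zeta)-J(u^*)$ from below by $\langle p^*,\Theta(u^*)-\Theta(\zeta)\rangle$. Substituting both into $\Psi(u^*,p^*,\zeta,\lambda)$ collapses the expression to $\langle \lambda - p^*,\Theta(u^*)\rangle$, which is $\leq 0$ by the dual branch \eqref{VIS2_VIP} tested at $p=\lambda$. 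Hence $\psi(u^*,p^*)\leq 0$, and combined with (ii) this forces $\psi(u^*,p^*)=0$.

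For (iv), the subtle part, $\psi(\hat u,\hat p)=0$ unpacks as $\Psi(\hat u,\hat p,\zeta,\lambda)\leq 0$ for every feasible $(\zeta,\lambda)$, and I would peel off the two (VIS) branches through two specific choices. Setting $\lambda=\hat p$ gives the Minty-type inequality
\begin{equation*}
\langle G(\zeta),\zeta-\hat u\rangle+J(\zeta)-J(\hat u)+\langle \hat p,\Theta(\zeta)-\Theta(\hat u)\rangle \geq 0,\quad \forall \zeta \in \UU\cap\mathfrak{B}_u(r);
\end{equation*}
applying the convex combination trick $\zeta=(1-t)v+t\hat u$ exactly as in the proof of Proposition~\ref{Minty}(i), using convexity of $J$ and of $\langle \hat p,\Theta(\cdot)\rangle$ (the latter from $\hat p\in\mathbf{C}^*$ and the $\mathbf{C}$-convexity of $\Theta$), then letting $t\uparrow 1$ and invoking continuity of $G$, recovers the primal branch \eqref{VIS1_VIP} at $(\hat u,\hat p)$. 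Setting instead $\zeta=\hat u$ gives $\langle \lambda-\hat p,\Theta(\hat u)\rangle\leq 0$ on $\mathbf{C}^*\cap\mathfrak{B}_p(r)$; probing with $\lambda=\hat p+\varepsilon p$ for arbitrary $p\in\mathbf{C}^*$ and small $\varepsilon>0$ (admissible because $\mathbf{C}^*$ is a cone) extends this to the dual branch \eqref{VIS2_VIP} on all of $\mathbf{C}^*$. Together these yield the KKT system for $(\hat u,\hat p)$. The main obstacle is ensuring that the perturbations $(1-t)v+t\hat u$ and $\hat p+\varepsilon p$ stay inside their respective balls for some range of the parameter; this is handled by exploiting that $r=(2\gamma\tau+1)\tilde r$ is strictly larger than the norm bound $\tilde r$ governing the iterates (and their averages), so that $\hat u$ and $\hat p$ are interior points of $\mathfrak{B}_u(r)$ and $\mathfrak{B}_p(r)$, leaving the needed slack for small perturbations.
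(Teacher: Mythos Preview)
Your argument is correct and follows the same overall skeleton as the paper's proof (verify convexity, test the supremum, then for (iv) split off the primal and dual branches by fixing $\lambda=\hat p$ and $\zeta=\hat u$), but with a few tactical differences worth noting. For (ii) you test at $(\zeta,\lambda)=(u,p)$ and get $\Psi(u,p,u,p)=0$ immediately; the paper instead tests at a KKT point $(u^*,p^*)$ and invokes \eqref{VIS1_VIP}--\eqref{VIS2_VIP}, which is heavier but of course also works. For (iii) you use monotonicity of $G$ directly to pass from $G(\zeta)$ to $G(u^*)$ and then apply \eqref{VIS1_VIP}; the paper instead invokes Assumption~\ref{AssumptionB} (i.e.\ that $(u^*,p^*)\in\setS_{\mbox{\tiny MVIS}}$), which in the monotone setting of Subsection~\ref{subsec:ergodic_rate} amounts to the same thing. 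For (iv) the paper is terse: it writes ``Since $G$ is continuous, the above inequality yields \eqref{eq:gap_primal}'' and then simply declares the KKT conclusion, leaving both the Minty-to-Stampacchia passage and the removal of the ball restrictions implicit. You spell out both steps, which is an improvement; your interiority argument (that $\|\hat u\|,\|\hat p\|\le \tilde r < r$, giving slack for the perturbations $(1-t)v+t\hat u$ and $\hat p+\varepsilon p$) is exactly what is needed to make the paper's conclusion rigorous for the points that actually arise in the ergodic analysis.
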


\begin{proof} {\ }

\begin{itemize}
\item[{\rm(i)}] Trivial. 
\item[{\rm(ii)}] By the definition of $\psi(u,p)$ and~\eqref{VIS2_VIP}, we have
\begin{eqnarray*}
\psi(u,p)\geq\langle G(u^*), u-u^*\rangle+J(u)-J(u^*)+\langle p^*,\Theta(u)\rangle-\langle p,\Theta(u^*)\rangle\geq0.
\end{eqnarray*}
\item[{\rm(iii)}] By the definition of $\psi(u,p)$ and Assumption~\ref{AssumptionB}, we have
\begin{eqnarray*}
\psi(u^*,p^*)&=&\max_{\scriptsize (\zeta,\lambda)\in(\UU\cap\mathfrak{B}_u(r))\times(\mathbf{C}^*\cap\mathfrak{B}_p(r))} \left[ \langle G(\zeta), u^*-\zeta\rangle+J(u^*)-J(\zeta) \right] \\
&& \quad \quad \quad \quad \quad \quad \quad \quad \quad \quad \quad +\langle\lambda,\Theta(u^*)\rangle-\langle p^*,\Theta(\zeta)\rangle\\
&\overset{\mbox{\tiny \rm Asm.~\ref{AssumptionB} \&~\eqref{VIS2_VIP}}}{\leq}&0 . 
\end{eqnarray*}
Together with the fact $\psi(u,p)\geq0$ in statement (ii), we obtain $\psi(u^*,p^*)=0$.
\item[{\rm(iv)}] Since $\psi(\hat{u},\hat{p})=0$, we have that for any $(\zeta,\lambda)\in(\UU\cap\mathfrak{B}_u(r))\times(\mathbf{C}^*\cap\mathfrak{B}_p(r))$,
\begin{equation}\label{eq:bifunction}
\langle G(\zeta), \zeta-\hat{u}\rangle+J(\zeta)-J(\hat{u})+\langle\hat{p},\Theta(\zeta)\rangle-\langle\lambda,\Theta(\hat{u})\rangle\geq0.
\end{equation}
Taking $\zeta=\hat{u}$, we have
\begin{equation}\label{eq:gap_dual}
\langle\hat{p}-\lambda,\Theta(\hat{u})\rangle\geq0,\quad\forall\lambda\in\mathbf{C}^*\cap\mathfrak{B}_p(r).
\end{equation}
Taking $\lambda=\hat{p}$ in~\eqref{eq:bifunction}, we have
\[
\langle G(\zeta),\zeta-\hat{u}\rangle+J(\zeta)-J(\hat{u})+\langle\hat{p},\Theta(\zeta)-\Theta(\hat{u})\rangle\geq0\quad\forall\zeta\in\UU\cap\mathfrak{B}_u(r).
\]
Since $G$ is continuous, the above inequality yields
\begin{equation}\label{eq:gap_primal}
\langle G(\hat{u}),\zeta-\hat{u}\rangle+J(\zeta)-J(\hat{u})+\langle\hat{p},\Theta(\zeta)-\Theta(\hat{u})\rangle\geq0\quad\forall\zeta\in\UU\cap\mathfrak{B}_u(r).
\end{equation}
Combining~\eqref{eq:gap_dual} with~\eqref{eq:gap_primal}, we conclude that $(\hat{u},\hat{p})$ is a KKT point of (VIP).
\end{itemize}
\end{proof}

\begin{theorem}[$O(1/k)$ ergodic convergence rate] Under the setting of Lemma~\ref{lemma:bound1_VIP}, assume that $G$ is monotone on $\UU$. The sequence $\{(v^k,u^k,p^k)\}$ is generated by ALAVI method. Then there exists $\tilde \rho>0$ such that
\[
\psi(\tilde{u}_t,\tilde{p}_t)\leq\frac{\tilde \rho}{t}.
\]
\end{theorem}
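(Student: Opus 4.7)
The key observation is that Lemma~\ref{lemma:bound1_VIP} already delivers a per-iteration inequality that, after a single invocation of monotonicity, is precisely the bifunction $\Psi(u^k,q^k,\zeta,\lambda)$ evaluated at an arbitrary test point $(\zeta,\lambda)$. The plan is to (i) rewrite Lemma~\ref{lemma:bound1_VIP} in terms of $\Psi$, (ii) telescope over $k=1,\dots,t$, and (iii) use convexity of $\Psi$ in its first two arguments to pass to the ergodic iterate.

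\textbf{Step 1: Turn Lemma~\ref{lemma:bound1_VIP} into a bound on $\Psi(u^k,q^k,\zeta,\lambda)$.} Applying the lemma at $(u,p)=(\zeta,\lambda)$ and expanding $L_{u^*}$ via \eqref{func:L_VIP}, the Lipschitz/$G(u^*)$ terms combine cleanly into
\[
\Lambda^k(\zeta,\lambda)-\Lambda^{k+1}(\zeta,\lambda)\;\ge\;\langle G(u^k),u^k-\zeta\rangle+J(u^k)-J(\zeta)+\langle\lambda,\Theta(u^k)\rangle-\langle q^k,\Theta(\zeta)\rangle+\rho\bigl[\|w^{k-1}-w^k\|^2+\|p^k-q^k\|^2\bigr].
\]
Now invoke monotonicity of $G$ on $\UU$ to replace $\langle G(u^k),u^k-\zeta\rangle$ by the smaller quantity $\langle G(\zeta),u^k-\zeta\rangle$. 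The right-hand side then equals $\Psi(u^k,q^k,\zeta,\lambda)$ plus the nonnegative error term $\rho[\cdot]$, so dropping the latter yields
\[
\Psi(u^k,q^k,\zeta,\lambda)\;\le\;\Lambda^k(\zeta,\lambda)-\Lambda^{k+1}(\zeta,\lambda).
\]

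\textbf{Step 2: Telescope and average.} Summing the above from $k=1$ to $t$ and discarding the nonnegative term $\Lambda^{t+1}(\zeta,\lambda)$ gives
\[
\frac{1}{t}\sum_{k=1}^{t}\Psi(u^k,q^k,\zeta,\lambda)\;\le\;\frac{\Lambda^{1}(\zeta,\lambda)}{t}.
\]
By the preceding lemma, $\Psi(\cdot,\cdot,\zeta,\lambda)$ is convex in $(u,p)$ whenever $\lambda\in\mathbf{C}^*$ (convexity of $J$, $\mathbf{C}$-convexity of $\Theta$ paired with $\lambda\in\mathbf{C}^*$, and linearity in $p$). Jensen's inequality applied to the ergodic averages $\tilde u_t=\frac1t\sum_k u^k$ and $\tilde p_t=\frac1t\sum_k q^k$ then yields
\[
\Psi(\tilde u_t,\tilde p_t,\zeta,\lambda)\;\le\;\frac{1}{t}\sum_{k=1}^{t}\Psi(u^k,q^k,\zeta,\lambda)\;\le\;\frac{\Lambda^{1}(\zeta,\lambda)}{t}.
\]

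\textbf{Step 3: Uniformize the constant.} Taking the supremum over $(\zeta,\lambda)\in(\UU\cap\mathfrak{B}_u(r))\times(\mathbf{C}^*\cap\mathfrak{B}_p(r))$ on both sides and noting that $\Lambda^{1}(\zeta,\lambda)$ is a continuous (in fact, quadratic) function of $(\zeta,\lambda)$ and hence attains a finite maximum on the compact set in question, we may set
\[
\tilde\rho\;:=\;\max_{(\zeta,\lambda)\in(\UU\cap\mathfrak{B}_u(r))\times(\mathbf{C}^*\cap\mathfrak{B}_p(r))}\Lambda^{1}(\zeta,\lambda)\;<\;\infty,
\]
which yields the advertised bound $\psi(\tilde u_t,\tilde p_t)\le\tilde\rho/t$.

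\textbf{Expected obstacle.} The only non-routine point is matching the mixed Lagrangian term in Lemma~\ref{lemma:bound1_VIP} to the primal--dual gap bifunction $\Psi$: the asymmetry between $L_{u^*}(u^k,p)$ and $L_{u^*}(u,q^k)$ (one uses $p$, the other $q^k$) is exactly what is needed so that, after the monotonicity swap $\langle G(u^k),u^k-\zeta\rangle\ge\langle G(\zeta),u^k-\zeta\rangle$, the residual is $\Psi(u^k,q^k,\zeta,\lambda)$ rather than some less convenient quantity. Once this identification is made, telescoping and Jensen finish the argument without further difficulty; boundedness of $\{(v^k,u^k,p^k)\}$ from Theorem~\ref{theo:convergence_VIP} is what makes the sup defining $\tilde\rho$ legitimate to take over a compact set.
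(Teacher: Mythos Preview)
Your proof is correct and follows essentially the same route as the paper: rewrite the right-hand side of Lemma~\ref{lemma:bound1_VIP} as $\langle G(u^k),u^k-\zeta\rangle+J(u^k)-J(\zeta)+\langle\lambda,\Theta(u^k)\rangle-\langle q^k,\Theta(\zeta)\rangle$, apply monotonicity to swap $G(u^k)$ for $G(\zeta)$ and identify the result as $\Psi(u^k,q^k,\zeta,\lambda)$, telescope, pass to the ergodic average via joint convexity of $\Psi$ in $(u,p)$, and bound the resulting $\Lambda^1(\zeta,\lambda)$ uniformly over the compact test set. Your Step~3 is slightly more explicit than the paper's (you define $\tilde\rho$ as the max of $\Lambda^1$ over the compact set, whereas the paper evaluates at the $t$-dependent maximizer $(u_t^+,p_t^+)$ and then appeals to boundedness), but the substance is identical.
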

\begin{proof}
By Lemma~\ref{lemma:bound1_VIP} and the monotonicity of $G$, we have that
\begin{eqnarray*}
&&\Lambda^k(u,p)-\Lambda^{k+1}(u,p)\\
&\geq&\left[L_{u^*}(u^k,p)-L_{u^*}(u,q^k)\right]+\langle G(u^k)-G(u^*),u^k-u\rangle\\
&=&\langle G(u^k),u^k-u\rangle+J(u^k)-J(u)+\langle p,\Theta(u^k)\rangle-\langle q^k,\Theta(u)\rangle\\
&=&\langle G(u),u^k-u\rangle+\langle G(u^k)-G(u),u^k-u\rangle+J(u^k)-J(u)\\
&&+\langle p,\Theta(u^k)\rangle-\langle q^k,\Theta(u)\rangle\\
&\overset{\mbox{\tiny \rm $G$ monotone}}{\geq}&\Psi(u^k,q^k,u,p). 
\end{eqnarray*}
Since $\Psi$ is convex in $u$ and linear in $p$, we have that
\begin{eqnarray*}
\Psi(\tilde{u}_{t},\tilde{p}_{t},u,p)&\leq&\frac{1}{t}\sum_{k=1}^t\left[\langle G(u),u^k-u\rangle+J(u^k)-J(u)+\langle p,\Theta(u^k)\rangle-\langle q^k,\Theta(u)\rangle\right]\\
&\leq&\frac{\Lambda^1(u,p)}{t}.
\end{eqnarray*}
Taking $(u,p)=(u_t^+,p_t^+)=\arg\max_{(u,p)\in(\UU\cap\mathfrak{B}_u(r))\times(\mathbf{C}^*\cap\mathfrak{B}_p(r))}\Psi(\tilde{u}_t,\tilde{p}_t,u,p)$, the above inequality yields
\[
\psi(\tilde{u}_t,\tilde{p}_t)\leq\frac{1}{t}\Lambda^1(u_t^{+},p_t^+).
\]
Since $(u_t^{+},p_t^{+})\in\mathfrak{B}_u(r)\times\mathfrak{B}_p(r)$, there is $\tilde{\rho}>0$ so that $\psi(\tilde{u}_t,\tilde{p}_t)\leq\frac{1}{t}\Lambda^1(u_t^{+},p_t^{+})\leq\frac{\tilde{\rho}}{t}$.
\end{proof}

\section{Linear Convergence of ALAVI under the Metric Subregularity Condition of the KKT Mapping}\label{sec:linear}

In order to get linear convergence of ALAVI, we introduce the metric subregularity of KKT mapping $H(u,p)=H(x)$ with $x=\left(\begin{array}{c}u\\ p\end{array}\right)$.
\begin{definition}[Metric subregularity] The set-valued mapping $F(x)$ is said to be metric subregular around $(x^*,0)$ if there exist $\delta>0$ 
and $\kappa>0$ in such a way that
\begin{equation}\label{MS}
\dist(x,F^{-1}(0))\leq\kappa \dist\left(0,F(x)\right),\quad\forall x\in \mathfrak{B}(x^*;\delta)
\end{equation}
where $\mathfrak{B}(x^*;\delta)$ is a ball center at $x^*$ with radius $\delta$.
\end{definition}

Now, let us consider the KKT mapping $H(x)$.
Suppose that $H(x)$ is metric subregular around $(\bar{x},0)$, where $\bar x$ is a KKT point of (VIP). This means that the following error bound condition for (VIP) holds:
\begin{equation}
\dist(x^k,\setS_{\mbox{\tiny VIS}})\leq\kappa \dist(0,H(u^k,p^k)),\quad\forall x^k\in \mathfrak{B}(\bar{x};\eta) .
\end{equation}
By the KKT stationary measure and the global convergence of $\{(v^k,u^k,p^k\mid k=1,2,...)\}$ generated by ALAVI as presented in Subsection~\ref{subsec:convergence}, there is $k_0$ such that for all $k\geq k_0$, the following bound hold true as well: 
\[
\dist(x^k,\setS_{\mbox{\tiny VIS}}) \overset{\eqref{eq:stationary_measure}}{\leq} \kappa\sigma\|w^{k-1}-w^k\|^2 . 
\]

\begin{definition}[Weighted distance]\label{def:weight_dist}
Given point $z^k=\left(\begin{array}{c}v^k\\p^{k-1}\end{array}\right)$, the weighted distance of $z^k$ to solution set $\setS_{\mbox{\tiny VIS}}$ is defined as
\[
\widetilde{\dist}^2(z^k,\setS_{\mbox{\tiny VIS}})=\min_{(u,p)\in\setS_{\mbox{\tiny VIS}}}\left(\beta_1\|v^k-u\|^2+\beta_3\|p^{k-1}-p\|^2\right),
\]
where $\beta_1$ and $\beta_3$ are defined according to~\eqref{eq:Lambda_VIP}.
\end{definition}
\begin{lemma}\label{lemma:bound2_VIP} If KKT mapping $H(x)$ satisfies metric subregularity around $(\bar{x},0)$ with $\kappa$ and $\delta$, there is $k_0$, for $k\geq k_0$, the following inequality holds
\[
\widetilde{\dist}^2(z^k,\setS_{\mbox{\tiny VIS}})+\beta_2\|u^{k-1}-u^k\|^2\leq\bar{\beta}\|w^{k-1}-w^k\|^2,\qquad\mbox{for $k\geq k_0$,}
\]
where $\beta_2$ is as given in~\eqref{eq:Lambda_VIP} and $\bar{\beta} := 2\max\{\beta_1,\beta_3\}\kappa^2\sigma^2+\max\{2\beta_1,\beta_2,2\beta_3\}$.
\end{lemma}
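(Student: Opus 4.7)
The plan is to combine the metric subregularity assumption with the KKT stationary measure~\eqref{eq:stationary_measure} and translate the resulting unweighted Euclidean bound at $(u^k,p^k)$ into a bound on the weighted distance $\widetilde{\dist}^2(z^k,\setS_{\mbox{\tiny VIS}})$ through a triangle-inequality decomposition, closing everything with the ALAVI relaxation identity.

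First, by the global convergence result in Theorem~\ref{theo:convergence_VIP}, the iterate $x^k=(u^k,p^k)^\top$ converges to a KKT point $\bar{x}$; hence there exists $k_0$ such that $x^k\in\mathfrak{B}(\bar{x};\delta)$ for every $k\ge k_0$. For such $k$, the metric subregularity of $H$ together with the KKT stationary measure~\eqref{eq:stationary_measure} gives
\[
\dist^2(x^k,\setS_{\mbox{\tiny VIS}})\le\kappa^2\sigma^2\|w^{k-1}-w^k\|^2.
\]
I then let $(\tilde{u},\tilde{p})\in\setS_{\mbox{\tiny VIS}}$ be a Euclidean projection of $x^k$ onto $\setS_{\mbox{\tiny VIS}}$, so that $\|u^k-\tilde{u}\|^2+\|p^k-\tilde{p}\|^2\le\kappa^2\sigma^2\|w^{k-1}-w^k\|^2$.

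Second, because $(\tilde{u},\tilde{p})$ is a feasible candidate in the minimization defining $\widetilde{\dist}^2$, it follows that
\[
\widetilde{\dist}^2(z^k,\setS_{\mbox{\tiny VIS}})\le\beta_1\|v^k-\tilde{u}\|^2+\beta_3\|p^{k-1}-\tilde{p}\|^2.
\]
Splitting $v^k-\tilde{u}=(v^k-u^k)+(u^k-\tilde{u})$ and $p^{k-1}-\tilde{p}=(p^{k-1}-p^k)+(p^k-\tilde{p})$ and then applying $\|a+b\|^2\le 2\|a\|^2+2\|b\|^2$ produces a cross term of order $2\max\{\beta_1,\beta_3\}\kappa^2\sigma^2\|w^{k-1}-w^k\|^2$ together with a residual $2\beta_1\|v^k-u^k\|^2+2\beta_3\|p^{k-1}-p^k\|^2$.

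Finally, adding $\beta_2\|u^{k-1}-u^k\|^2$ to both sides and invoking the relaxation identity $v^k-u^k=-\tfrac{\eta}{1-\eta}(v^k-v^{k-1})$ obtained from step~\eqref{ALAVI_relax}, the term $\|v^k-u^k\|^2$ is absorbed into a multiple of $\|v^{k-1}-v^k\|^2$. Since $\|v^{k-1}-v^k\|^2$, $\|u^{k-1}-u^k\|^2$, and $\|p^{k-1}-p^k\|^2$ are each bounded by $\|w^{k-1}-w^k\|^2$, collecting coefficients delivers the stated bound. The main obstacle is really two-fold: first, choosing $k_0$ large enough that all iterates enter the subregularity neighborhood $\mathfrak{B}(\bar{x};\delta)$, which is supplied cleanly by Theorem~\ref{theo:convergence_VIP}; second, the bookkeeping needed so that the $\eta$-dependent coefficient arising from $\|v^k-u^k\|^2$ fits under the stated $\max\{2\beta_1,\beta_2,2\beta_3\}$, which is where care with the constants is required to recover precisely $\bar{\beta}=2\max\{\beta_1,\beta_3\}\kappa^2\sigma^2+\max\{2\beta_1,\beta_2,2\beta_3\}$.
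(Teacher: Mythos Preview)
Your proposal is correct and follows essentially the same route as the paper: both arguments (i) use convergence of $(u^k,p^k)$ to enter the subregularity neighborhood, (ii) combine metric subregularity with the stationary estimate~\eqref{eq:stationary_measure} to bound $\dist^2(x^k,\setS_{\mbox{\tiny VIS}})$, (iii) pass from $(u^k,p^k)$ to $(v^k,p^{k-1})$ via the triangle-inequality splitting $\|a+b\|^2\le 2\|a\|^2+2\|b\|^2$, and (iv) convert $\|v^k-u^k\|^2$ into $\|v^{k-1}-v^k\|^2$ through the relaxation identity $v^k-u^k=-\tfrac{\eta}{1-\eta}(v^k-v^{k-1})$ before collecting into $\|w^{k-1}-w^k\|^2$. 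Your caveat about the $\eta$-dependent coefficient is well placed: the paper's own proof produces $\tfrac{2\beta_1\eta^2}{(1-\eta)^2}\|v^{k-1}-v^k\|^2$ at that step, so matching the stated $\bar\beta$ exactly is indeed the only delicate bookkeeping point, and it is shared by both arguments.
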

\begin{proof}
By~\eqref{def:weight_dist} and the metric subregularity of $H(x)$, we have
\begin{eqnarray}
&&\widetilde{\dist}^2(z^k,\setS_{\mbox{\tiny VIS}})+\beta_2\|u^{k-1}-u^k\|^2\nonumber\\
&=&\min_{(u,p)\in\setS_{\mbox{\tiny VIS}}}\left\{\beta_1\|v^k-u\|^2+\beta_3\|p^{k-1}-p\|^2\right\}+\beta_2\|u^{k-1}-u^k\|^2\nonumber\\
&\leq&\min_{(u,p)\in\setS_{\mbox{\tiny VIS}}}\left\{2\beta_1\|u^k-u\|^2+2\beta_1\|u^{k}-v^k\|^2+2\beta_3\|p^{k}-p\|^2+2\beta_3\|p^{k-1}-p^k\|^2\right\}\nonumber\\
&&+\beta_2\|u^{k-1}-u^k\|^2\nonumber\\
&\leq&2\max\{\beta_1,\beta_3\}\dist^2(x^k,\setS_{\mbox{\tiny VIS}})+2\beta_1\|u^k-v^k\|^2+2\beta_3\|p^{k-1}-p^k\|^2\nonumber\\
&&+\beta_2\|u^{k-1}-u^k\|^2\nonumber\\
&\leq&2\max\{\beta_1,\beta_3\}\kappa^2\dist^2(0,H(u^k,p^k))+2\beta_1\|u^k-v^k\|^2+2\beta_3\|p^{k-1}-p^k\|^2\nonumber\\
&&+\beta_2\|u^{k-1}-u^k\|^2\nonumber\\
&&\mbox{(by the metric subregularity of $H$ and the convergence results of~\ref{subsec:convergence})}\nonumber\\
&\overset{\mbox{\tiny \eqref{eq:stationary_measure}}}{\leq}&2\max\{\beta_1,\beta_3\}\kappa^2\sigma^2\|w^{k-1}-w^k\|^2+2\beta_1\|u^k-v^k\|^2+2\beta_3\|p^{k-1}-p^k\|^2\nonumber\\
&&+\beta_2\|u^{k-1}-u^k\|^2 \nonumber \\ 
&\overset{\mbox{\tiny \eqref{eq:primal_v3})}}{\leq} &2\max\{\beta_1,\beta_3\}\kappa^2\sigma^2\|w^{k-1}-w^k\|^2+\frac{2\beta_1\eta^2}{(1-\eta)^2}\|v^{k-1}-v^k\|^2+2\beta_3\|p^{k-1}-p^k\|^2\nonumber\\
&&+\beta_2\|u^{k-1}-u^k\|^2 \nonumber \\
&\leq&\bar{\beta}\|w^{k-1}-w^k\|^2.
\end{eqnarray}
\end{proof}
By the estimation of the change of $\Lambda^k(u,p)$ (\eqref{eq:Lambda_VIP} and Lemma~\ref{lemma:bound1_VIP}) with $x={\rm Proj}_{\setS_{\mbox{\tiny VIS}}}(z^k)=\left(\begin{array}{c}v_k^*\\p_{k-1}^*\end{array}\right)$, it follows that
\begin{eqnarray}\label{eq:variance_Lambda}
&&\Lambda^{k+1}(v_k^*,p_{k-1}^*)+\left[L_{v_k^*}(u^k,p_{k-1}^*)-L_{v_k^*}(v_k^*,q^{k})\right]\nonumber\\
&\overset{\mbox{\tiny Lem.~\ref{lemma:bound1_VIP}}}{\leq} &\Lambda^{k}(v_k^*,p_{k-1}^*)-\rho \|w^{k-1}-w^k\|^2\nonumber\\
&\overset{\mbox{\tiny Def.~\ref{def:weight_dist}}}{=}&\widetilde{\dist}^2(z^{k},\setS_{\mbox{\tiny VIS}})+\beta_2\|u^{k-1}-u^{k}\|^2- \rho \|w^{k-1}-w^k\|^2.
\end{eqnarray}
On the other hand, by the definition of the Lyapnov function value $\Lambda^k(u,p)$ and the fact that $(v_k^*,p_{k-1}^*)$ is a saddle point of $L_{v_k^*}(u,p)$, we have
\begin{eqnarray}\label{eq:linear_converge}
&&\widetilde{\dist}^2(z^{k+1},\setS_{\mbox{\tiny VIS}})+\beta_2\|u^{k}-u^{k+1}\|^2\nonumber\\
&\leq&\Lambda^{k+1}(v_k^*,p_{k-1}^*)+\left[L_{v_k^*}(u^k,p_{k-1}^*)-L_{v_k^*}(v_k^*,q^{k})\right]\nonumber\\
&\overset{\mbox{\tiny \eqref{eq:variance_Lambda}}}{\leq}&\widetilde{\dist}^2(z^{k},\setS_{\mbox{\tiny VIS}})+\beta_2\|u^{k-1}-u^{k}\|^2- \rho \|w^{k-1}-w^k\|^2\nonumber\\
&\overset{{\mbox{\tiny Lem.~\ref{lemma:bound2_VIP}}}}{\leq}&\widetilde{\dist}^2(z^{k},\setS_{\mbox{\tiny VIS}})+\beta_2\|u^{k-1}-u^{k}\|^2-\frac{\rho}{\bar{\beta}}\left[\widetilde{\dist}^2(z^{k},\setS_{\mbox{\tiny VIS}})+\beta_2\|u^{k-1}-u^k\|^2\right]\nonumber\\
&=&\left(1-\frac{\rho}{\bar{\beta}}\right)\left[\widetilde{\dist}^2(z^{k},\setS_{\mbox{\tiny VIS}})+\beta_2\|u^{k-1}-u^{k}\|^2\right].
\end{eqnarray}
A linear rate of convergence for the sequence $\widetilde{\dist}^2(z^{k},\setS_{\mbox{\tiny VIS}})+\beta_2\|u^{k-1}-u^{k}\|^2$ thus follows from the above inequality.
The above convergence result is summerized in the following theorem.

\begin{theorem}[Linear convergence of ALAVI]\label{theo:linear_converg} Suppose that the same conditions as in Theorem~\ref{theo:convergence_VIP} hold. Let $\{(v^k,u^k,p^k)\mid k=1,2,...\}$ be the sequence generated by ALAVI converging to $(\bar{u},\bar{u},\bar{p})$ and the KKT mapping $H(u,p)$ is metric subregular around $((\bar{u},\bar{p}),0)$ with $\kappa>0$ and $\delta>0$. Then,  the sequence $\{(v^k,u^k,p^k)\mid k=1,2,...,\}$ 
actually converges to $(\bar{u},\bar{u},\bar{p})$ at a linear rate.
\end{theorem}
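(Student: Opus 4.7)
The plan is to upgrade the Q-linear recursion~\eqref{eq:linear_converge}, already derived just above the theorem, into R-linear convergence of the full iterate $(v^k,u^k,p^k)$ to the known limit $(\bar{u},\bar{u},\bar{p})$. Setting $E^k := \widetilde{\dist}^2(z^k,\setS_{\mbox{\tiny VIS}}) + \beta_2\|u^{k-1}-u^k\|^2$ and $q := 1 - \rho/\bar{\beta}\in(0,1)$, the recursion yields $E^k\leq q^{k-k_0}E^{k_0}$ for all $k\geq k_0$, where $k_0$ is chosen so that $x^k\in\mathfrak{B}(\bar{x};\delta)$ for $k\geq k_0$; the existence of such $k_0$ is guaranteed by Theorem~\ref{theo:convergence_VIP}.

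Next I would translate the decay of the merit function into decay of the successive displacements. Applying Lemma~\ref{lemma:bound1_VIP} with $(u,p)=(v_k^*,p_{k-1}^*)$---which is a saddle point of $L_{v_k^*}(\cdot,\cdot)$ by Proposition~\ref{THM2_VIP} since $(v_k^*,p_{k-1}^*)\in\setS_{\mbox{\tiny VIS}}$---and dropping the nonnegative saddle-point gap gives
$$\rho\,\|w^{k-1}-w^k\|^2 \;\leq\; \Lambda^{k}(v_k^*,p_{k-1}^*) - \Lambda^{k+1}(v_k^*,p_{k-1}^*) \;\leq\; \Lambda^{k}(v_k^*,p_{k-1}^*) \;=\; E^k.$$
Consequently each of $\|v^{k-1}-v^k\|$, $\|u^{k-1}-u^k\|$, and $\|p^{k-1}-p^k\|$ decays geometrically at rate $\sqrt{q}$.

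Finally, I would close the gap by a Cauchy/telescoping argument tied to the limit identified by Theorem~\ref{theo:convergence_VIP}. For every $k\geq k_0$,
$$\|u^k-\bar{u}\| \;\leq\; \sum_{j\geq k}\|u^{j+1}-u^j\| \;\leq\; \frac{\sqrt{E^{k_0}/\rho}}{1-\sqrt{q}}\,q^{(k-k_0)/2},$$
and analogous telescoping bounds yield $\|p^k-\bar{p}\|$ and $\|v^k-\bar{u}\|$ at the same R-linear rate; the identity $v^k-u^k=\eta(v^{k-1}-u^k)$ conveniently forces $\{v^k\}$ and $\{u^k\}$ to share the limit and the R-linear order.

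The main obstacle is the localization forced by metric subregularity: the error bound of Lemma~\ref{lemma:bound2_VIP}, and hence the Q-linear recursion, are valid only when $x^k\in\mathfrak{B}(\bar{x};\delta)$, so the argument must start from a late enough index $k_0$, and the global convergence of Theorem~\ref{theo:convergence_VIP} is exactly what supplies such a $k_0$. A secondary subtlety is that the weighted distance $\widetilde{\dist}^2(z^k,\setS_{\mbox{\tiny VIS}})$ is built from the $k$-dependent projection $(v_k^*,p_{k-1}^*)$; this is bypassed by the telescoping step, which only needs control of successive displacements rather than of distances to a single fixed reference solution in $\setS_{\mbox{\tiny VIS}}$.
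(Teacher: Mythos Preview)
Your proof is correct and slightly more complete than the paper's. Both start from the Q-linear recursion~\eqref{eq:linear_converge} for $E^k:=\widetilde{\dist}^2(z^k,\setS_{\mbox{\tiny VIS}})+\beta_2\|u^{k-1}-u^k\|^2$ derived just before the theorem. The paper then simply iterates this recursion to get $\widetilde{\dist}(z^{k+1},\setS_{\mbox{\tiny VIS}})\leq\hat M\,(1-\rho/\bar\beta)^{k/2}$, i.e.\ R-linear decay of the weighted distance to the solution \emph{set} $\setS_{\mbox{\tiny VIS}}$ (this is exactly what the paper records in the subsequent Corollary). You instead extract geometric decay of the displacements $\|w^{k-1}-w^k\|$ from the same chain of inequalities and telescope to obtain R-linear convergence of $(v^k,u^k,p^k)$ to the specific limit $(\bar u,\bar u,\bar p)$. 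Your route matches the literal wording of the theorem more faithfully; the paper's route is shorter and yields the explicit rate $\sqrt{1-\rho/\bar\beta}$ without reference to a starting index $k_0$.

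One small wording issue: in your step~2 you invoke Lemma~\ref{lemma:bound1_VIP} and speak of ``dropping the nonnegative saddle-point gap,'' but the lemma's right-hand side also carries the cross term $\langle G(u^k)-G(u^*),u^k-u\rangle$. The bound you actually need, $\rho\,\|w^{k-1}-w^k\|^2\leq E^k-E^{k+1}\leq E^k$, follows directly from the intermediate inequality in the derivation of~\eqref{eq:linear_converge} (the line obtained from~\eqref{eq:variance_Lambda}), so no new work is required---just cite that step rather than re-invoking Lemma~\ref{lemma:bound1_VIP}.
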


Continuing from~\eqref{eq:linear_converge},
we have
\begin{eqnarray*}
\widetilde{\dist}^2(z^{k+1},\setS_{\mbox{\tiny VIS}})&\leq&\widetilde{\dist}^2(z^{k+1},\setS_{\mbox{\tiny VIS}})+\beta_2\|u^{k}-u^{k+1}\|^2\\
&\leq&\left(1-\frac{\rho}{\bar{\beta}}\right)^k\left[\widetilde{\dist}^2(z^{1},\setS_{\mbox{\tiny VIS}})+\beta_2\|u^{0}-u^{1}\|^2\right].
\end{eqnarray*}
Then,
\begin{eqnarray*}
\widetilde{\dist}(z^{k+1},\setS_{\mbox{\tiny VIS}})\leq\hat{M}\left(\sqrt{1-\frac{\rho}{\bar{\beta}}}\right)^{k}\quad\mbox{with}\;\hat{M}=\sqrt{\widetilde{\dist}^2(z^{1},\setS_{\mbox{\tiny VIS}})+\beta_2\|u^{0}-u^{1}\|^2}.
\end{eqnarray*}
This shows that the sequence $\{z^k\mid k=1,2,...\}$ converges to the saddle point set $\setS_{\mbox{\tiny VIS}}$ at a linear rate. To be precise,
\[
\lim_{k\rightarrow\infty}\sup\sqrt[k]{\widetilde{\dist}(z^{k+1},\setS_{\mbox{\tiny VIS}})}=\sqrt{1-\frac{\rho}{\bar{\beta}}}<1.
\]
Formally, we have
\begin{corollary} [R-linear rate of $\{z^k\}$] Suppose the conditions of Theorem~\ref{theo:linear_converg} hold. Then, the sequence $\{z^k\mid k=1,2,...\}$ with $z^k=\left(\begin{array}{c}v^k\\p^{k-1}\end{array}\right)$ converges to the saddle point set $\setS_{\mbox{\tiny VIS}}$ at a linear rate:
\[
\lim_{k\rightarrow\infty}\sup\sqrt[k]{\widetilde{\dist}(z^{k+1},\setS_{\mbox{\tiny VIS}})}=\sqrt{1-\frac{\rho}{\bar{\beta}}}<1.
\]
\end{corollary}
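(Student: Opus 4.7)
The plan is to leverage the one-step contraction inequality~\eqref{eq:linear_converge} that was already established just above the corollary statement. That inequality reads
\[
\widetilde{\dist}^2(z^{k+1},\setS_{\mbox{\tiny VIS}})+\beta_2\|u^{k}-u^{k+1}\|^2 \leq \left(1-\frac{\rho}{\bar\beta}\right)\left[\widetilde{\dist}^2(z^{k},\setS_{\mbox{\tiny VIS}})+\beta_2\|u^{k-1}-u^{k}\|^2\right],
\]
valid for all $k\ge k_0$, where $k_0$ is the index past which the metric-subregularity-based error bound in Lemma~\ref{lemma:bound2_VIP} kicks in (this is guaranteed by the global convergence of $\{(v^k,u^k,p^k)\}$ to $(\bar u,\bar u,\bar p)$ proved in Theorem~\ref{theo:convergence_VIP} together with the metric subregularity of $H$ around $((\bar u,\bar p),0)$).

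Next I would iterate this contraction. Define $E_k := \widetilde{\dist}^2(z^{k},\setS_{\mbox{\tiny VIS}})+\beta_2\|u^{k-1}-u^{k}\|^2$. The one-step bound becomes $E_{k+1}\le (1-\rho/\bar\beta)\,E_k$ for $k\ge k_0$, and a trivial induction yields $E_{k+1}\le (1-\rho/\bar\beta)^{k-k_0+1} E_{k_0}$ for all $k\ge k_0$. Dropping the nonnegative term $\beta_2\|u^k-u^{k+1}\|^2$ on the left and taking square roots then gives
\[
\widetilde{\dist}(z^{k+1},\setS_{\mbox{\tiny VIS}})\le \hat M\left(\sqrt{1-\rho/\bar\beta}\right)^{k},
\]
with $\hat M$ a constant depending only on $E_{k_0}$ (for simplicity one may absorb the shift $k_0$ into the constant). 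This is precisely the R-linear decay bound written out in the paragraph immediately preceding the corollary.

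Taking $k$-th roots on both sides and passing to the limit, the factor $\hat M^{1/k}\to 1$, so
\[
\limsup_{k\to\infty}\sqrt[k]{\widetilde{\dist}(z^{k+1},\setS_{\mbox{\tiny VIS}})}\le \sqrt{1-\rho/\bar\beta},
\]
and since $\rho>0$ and $\bar\beta>0$ by their explicit definitions, the right-hand side is strictly less than $1$, giving the claimed R-linear rate.

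Honestly, this corollary is a direct recasting of the calculation already displayed between Theorem~\ref{theo:linear_converg} and the corollary. The only real work happens upstream: securing inequality~\eqref{eq:linear_converge}, which relies on (a) the Lyapunov descent from Lemma~\ref{lemma:bound1_VIP} applied with the saddle-point choice $(v_k^*,p_{k-1}^*)={\rm Proj}_{\setS_{\mbox{\tiny VIS}}}(z^k)$, (b) the error-bound bound from Lemma~\ref{lemma:bound2_VIP} (whose proof in turn needs the KKT-residual estimate~\eqref{eq:stationary_measure} together with metric subregularity), and (c) the observation that the saddle-point term $L_{v_k^*}(u^k,p_{k-1}^*)-L_{v_k^*}(v_k^*,q^k)$ is nonnegative so it can be harmlessly added on the left before chaining. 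Since all three of these ingredients are in place by the point the corollary is stated, no separate obstacle remains, and the proposal above closes the proof.
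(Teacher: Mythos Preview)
Your proposal is correct and follows exactly the same route as the paper: iterate the one-step contraction~\eqref{eq:linear_converge}, drop the nonnegative $\beta_2\|u^k-u^{k+1}\|^2$ term, take square roots to obtain the geometric bound $\widetilde{\dist}(z^{k+1},\setS_{\mbox{\tiny VIS}})\le \hat M(\sqrt{1-\rho/\bar\beta})^k$, and then pass to the $\limsup$ of $k$-th roots. Your handling of the index shift $k_0$ (absorbing it into $\hat M$) is in fact slightly more careful than the paper's write-up, which starts the recursion at $k=1$.
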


We shall comment that the metric subregularity condition indeed holds in some commonly encountered environments; cf.~the following proposition.

\begin{proposition} Consider (VIP). Suppose Assumptions~\ref{assumpA} holds. Let $w^*=(u^*, u^*, p^*)$ with $(u^*,p^*)$ be the saddle point of (VIP). Then, the following hold:
\begin{itemize}
\item[{\rm(i)}] If $G(u)$ and $\partial J(u)$ are piecewise linear, $\mathbf{U}$ is polyhedral, $\Theta(u)=Au-b$, and $\mathbf{C}=\{0\}$, then $H(w)$ is metric subregular around $(w^*,0)$.
\item[{\rm(ii)}] If $G(u)=Qu$, $J(u)=\langle c, u\rangle ,\; Q\in \RR^{n\times n}$ is symmetric p.s.d.\ matrix, $c\in \RR^n$, $\mathbf{U}$ is polyhedral, $\Theta(u)=Au-b$, and $\mathbf{C}$ is polyhedral convex cone in $\RR^m$, then $H(w)$ is metric subregular around $(w^*,0)$.
\end{itemize}
\end{proposition}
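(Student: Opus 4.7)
The plan is to recognize that in both cases the KKT multifunction $H$ has a graph that is the finite union of polyhedral convex sets, i.e., $H$ is a \emph{polyhedral multifunction} in the sense of Robinson. Robinson's theorem on polyhedral multifunctions then gives that such an $H$ is locally upper Lipschitz at every point of its graph; equivalently, there exist $\kappa,\delta>0$ with
\[
\mathrm{dist}(w,H^{-1}(0))\le \kappa\,\mathrm{dist}(0,H(w)),\qquad\forall w\in\mathfrak{B}(w^*;\delta),
\]
which is exactly the metric subregularity of $H$ around $(w^*,0)$. So the entire work reduces to verifying the polyhedral-graph property in each of the two settings.

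For case (i), recall that the normal-cone mapping $\mathcal{N}_{\mathbf{U}}$ of a polyhedral convex set $\mathbf{U}$ has a polyhedral graph; this is a standard fact based on the decomposition of $\mathbf{U}$ into its relatively open faces. The assumption that $G$ and $\partial J$ are piecewise linear means that their graphs are finite unions of polyhedra, and the affine maps $u\mapsto A^{\top}p$ and $u\mapsto Au-b$ have polyhedral graphs as well. Since finite sums (Minkowski additions) and Cartesian products of polyhedral multifunctions remain polyhedral, the first block $G(u)+\partial J(u)+A^{\top}p+\mathcal{N}_{\mathbf{U}}(u)$ is a polyhedral multifunction of $(u,p)$. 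With $\mathbf{C}=\{0\}$ one has $\mathbf{C}^{*}=\mathbb{R}^{m}$ and $\mathcal{N}_{\mathbf{C}^{*}}(p)=\{0\}$, so the second block reduces to the affine map $-(Au-b)$. Consequently $H$ itself is polyhedral.

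For case (ii), $G(u)=Qu$, $\partial J(u)=\{c\}$, $A^{\top}p$ and $Au-b$ are all affine, hence have polyhedral graphs. Since $\mathbf{U}$ is polyhedral, so is $\mathcal{N}_{\mathbf{U}}$; since $\mathbf{C}$ is a polyhedral convex cone, its dual $\mathbf{C}^{*}$ is polyhedral (a basic duality fact), and hence $\mathcal{N}_{\mathbf{C}^{*}}$ also has a polyhedral graph. Combining as before yields that $H$ is a polyhedral multifunction. Applying Robinson's theorem in both (i) and (ii) gives the conclusion.

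The only genuinely delicate point is the algebra of polyhedral multifunctions (that sum, product, and the particular composition appearing in the KKT operator preserve the polyhedral-graph property and, via Robinson, preserve the local upper Lipschitz bound). These facts are classical (Robinson \cite{Zhu03}-era results and their restatements in Dontchev--Rockafellar, Facchinei--Pang); it would suffice to cite them rather than rederive. No issue of strict monotonicity, uniqueness, or boundedness of multipliers is required—metric subregularity is a consequence of the polyhedral structure alone, which makes the proof essentially one of bookkeeping.
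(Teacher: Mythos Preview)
Your proposal is correct and essentially matches the paper's own argument: the paper simply cites Zheng--Ng~\cite{ZN14} for~(i) and Robinson~\cite{R81} for~(ii), both of which establish metric subregularity via the polyhedral/piecewise-linear structure of the KKT multifunction, which is exactly the mechanism you unpack in detail. One minor remark: your citation ``Robinson \cite{Zhu03}-era results'' is misplaced---\cite{Zhu03} is Zhu's augmented Lagrangian paper, not Robinson's polyhedral multifunction theorem; the relevant reference in the paper's bibliography is~\cite{R81}.
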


\begin{proof} {\ }
\begin{itemize}
\item[(i)] Cf.~Theorem 3.3 of~\cite{ZN14}.
\item[(ii)] Cf.~Proposition 1 and Corollary of~\cite{R81}.
\end{itemize}
\end{proof}
%
%
%

\section{Numerical Experiments} 
\label{exp}

In this section, we shall test the performance of ALAVI on 
two sets of randomly generated {\it highly}\/ non-monotone and constrained Variational Inequality (N-CVI) problem instances specified in the following two subsections.

\subsection{Non-monotone and constrained variational inequality problems experiment results I (N-CVI-1)}

By constructing the mapping $G(u):=M(u)(u-\mathbf{1}_n/4)$, where $M:\RR^n\rightarrow\RR^{n\times n}$ is a matrix-valued function and $M(u)$ is a symmetric positive semidefinite matrix for any $u$, our objective is to find a solution to the following highly non-monotone constrained variational inequality: 
\begin{equation}\label{eq:N_CVI}
\mbox{\rm(N-CVI-1)}\quad\langle G(u^*),u-u^*\rangle\geq0,\quad\forall u\in\Gamma,
\end{equation}
where $\Gamma=\{u\in[0,1]^n \mid \mathbf{1}_n^{\top}u-\frac{n}{2}\leq0\}$. Moreover, introduce $M$ as
\[
M(u):= t_1t_1^{\top}+t_2t_2^{\top},\quad\mbox{with}\quad t_1=A\cos u,\quad t_2 = B\cdot\frac{\mathbf{1}_n}{\mathbf{1}_n+\exp(-u)},
\]
where $u\in\RR^n$, $A, B\in\RR^{n\times n}$ and the operations `$\cos$' and `$\exp$' 
are taken componentwise. 

To see that $G$ is non-monotone, consider e.g.\ $n=2$, $A=\left(\begin{array}{cc}-0.9&-0.8\\0.3&1.2\end{array}\right)$, $B=\left(\begin{array}{cc}0.9&0.7\\-0.3&-0.3\end{array}\right)$, $u_1=\left(\begin{array}{c}0\\0.7\end{array}\right)$, and $u_2=\left(\begin{array}{c}0.1\\0.9\end{array}\right)$, then $\langle G(u_1)-G(u_2),u_1-u_2\rangle=-0.0245<0$. 
To see that Assumption~\ref{AssumptionB} does hold, we let $u^{\sharp}=\mathbf{1}_n/4$ and $p^{\sharp}=0$. We have that $\mathbf{1}_n^{\top}u^{\sharp}-n/2=-n/4\leq0$, leading to $\langle G(u),u-u^{\sharp}\rangle+\langle p^{\sharp}, \mathbf{1}_n^{\top}(u-u^{\sharp})\rangle=\langle G(u),u-u^{\sharp}\rangle=M(u)(u-\mathbf{1}_n/4)^2\geq0$, satisfying Assumption~\ref{AssumptionB} and so ALAVI is guaranteed to converge. 

The ALAVI scheme for (N-CVI-1) can be explicitly expressed as follows:
\[
\left\{\begin{array}{l}
v^k:=(1-\eta)u^k+\eta v^{k-1}; \\ q^k:=\max\left(0,p^k+\gamma(\mathbf{1}_n^{\top}u^k-n/2)\right); \\ 
u^{k+1}:={\rm proj}_{[0,1]^n}\left(0,v^k-\alpha(G(u^k)+\mathbf{1}_nq^k)\right); \\ p^{k+1}:=\max\left(0,p^k+\gamma(\mathbf{1}_n^{\top}u^{k+1}-n/2)\right).\end{array}\right.
\]
We consider the following settings: $n\in\{100,500,1000,2000\}$, and the entries of matrices $A$ and $B$ are independently drawn from the normal distribution $\mathbb{N}(0, 1)$. The initial point for ALAVI is always set as $u^0 = \mathbf{1}_n$. As the function $G$ is non-monotone, there may exist multiple solutions to (N-CVI-1). Therefore, we utilize the KKT error, defined as ${\rm dist}\left(0,G(u^k)+\mathbf{1}_np^k+\mathcal{N}_{[0,1]^n}(u^k)\right)+\|\max\{0,\mathbf{1}_n^{\top}u^{k}-n/2\}\|$, to evaluate the performance of ALAVI. Figure~\ref{fig:num1} illustrates the KKT errors plotted against the iteration counts. The results demonstrate that ALAVI effectively and efficiently solves the (N-CVI-1) instances. 
\begin{figure}[ht]
\begin{center}
{\includegraphics[width=0.24\textwidth]{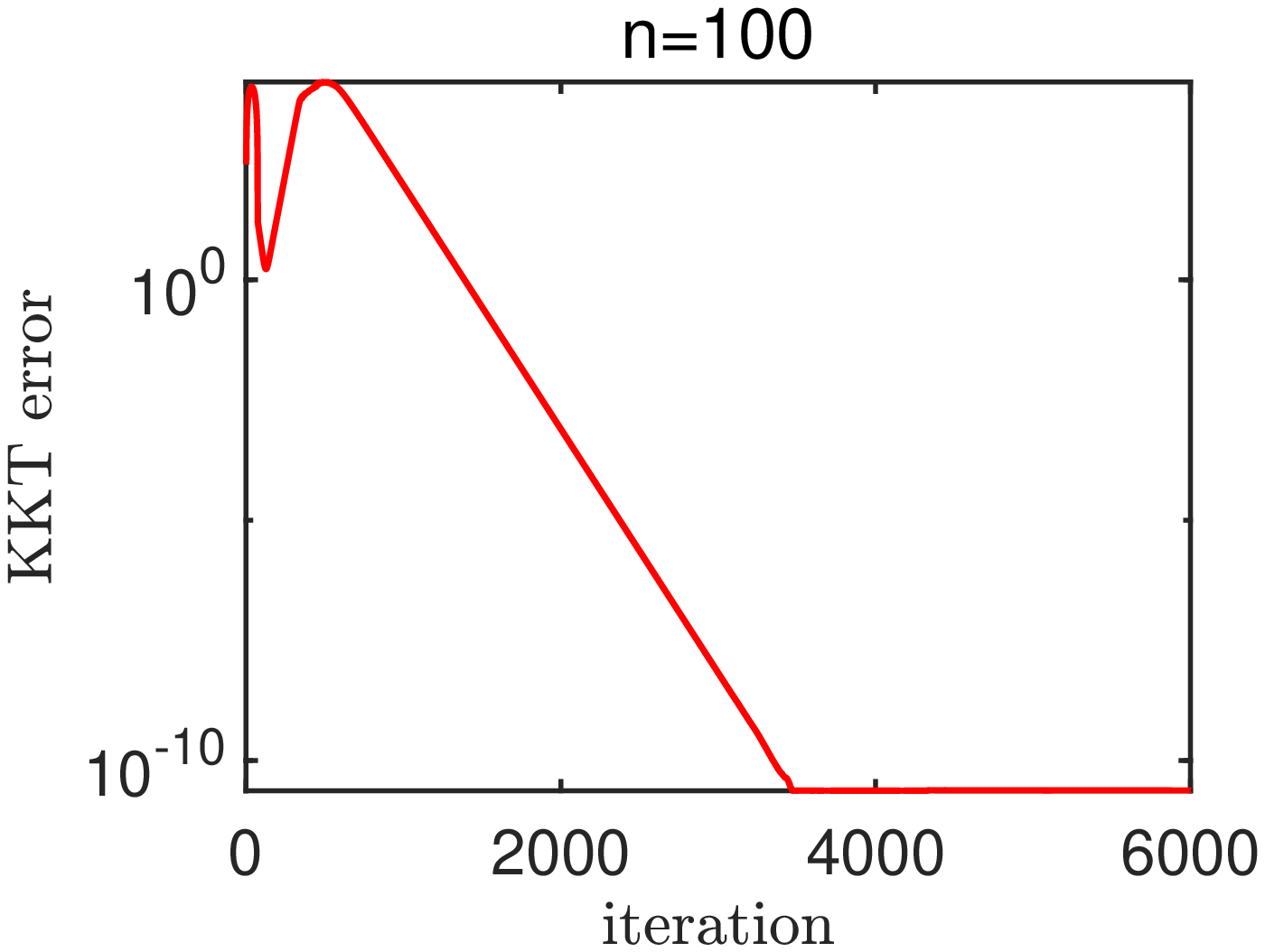}}
{\includegraphics[width=0.24\textwidth]{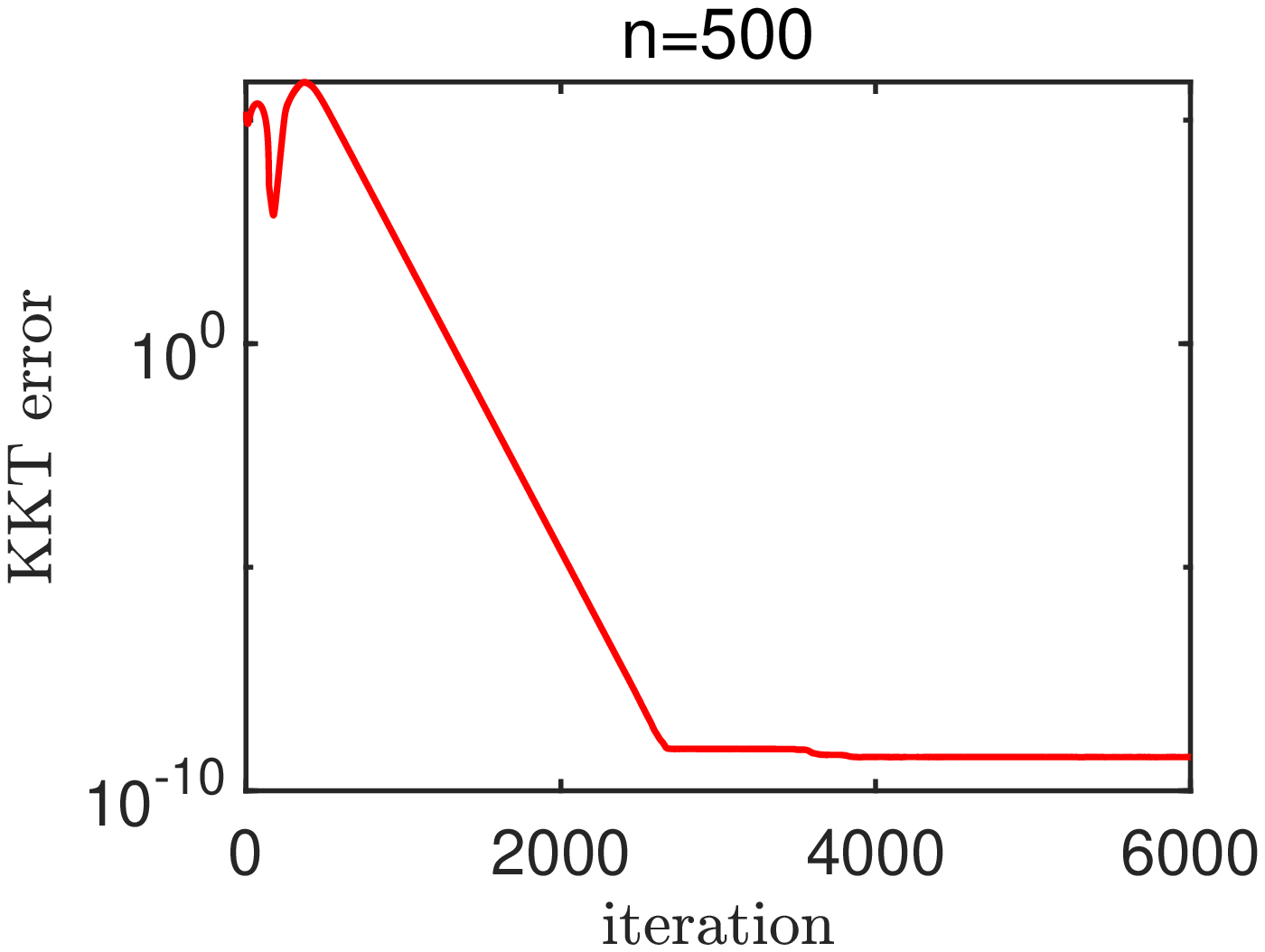}}
{\includegraphics[width=0.24\textwidth]{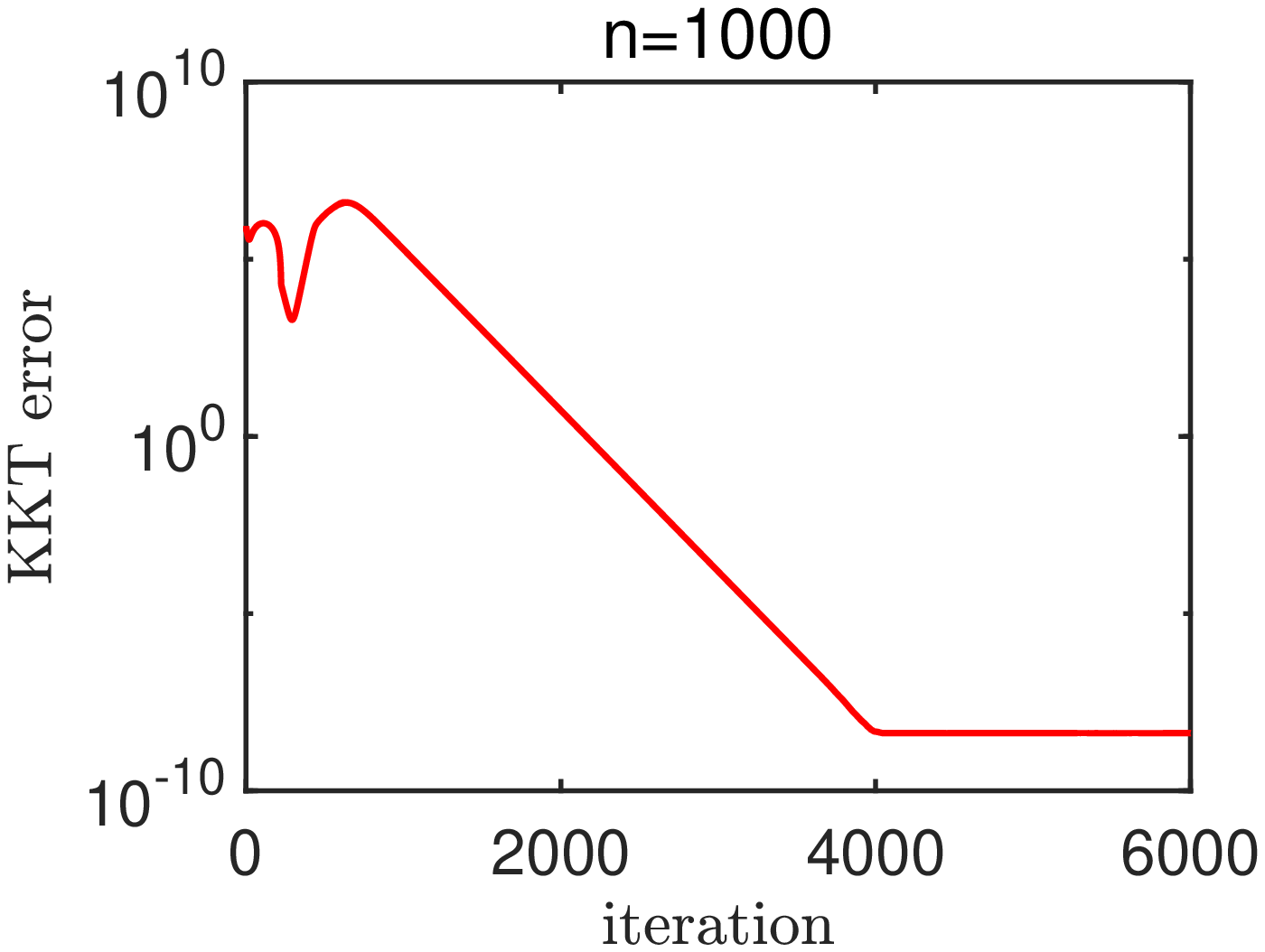}}
{\includegraphics[width=0.24\textwidth]{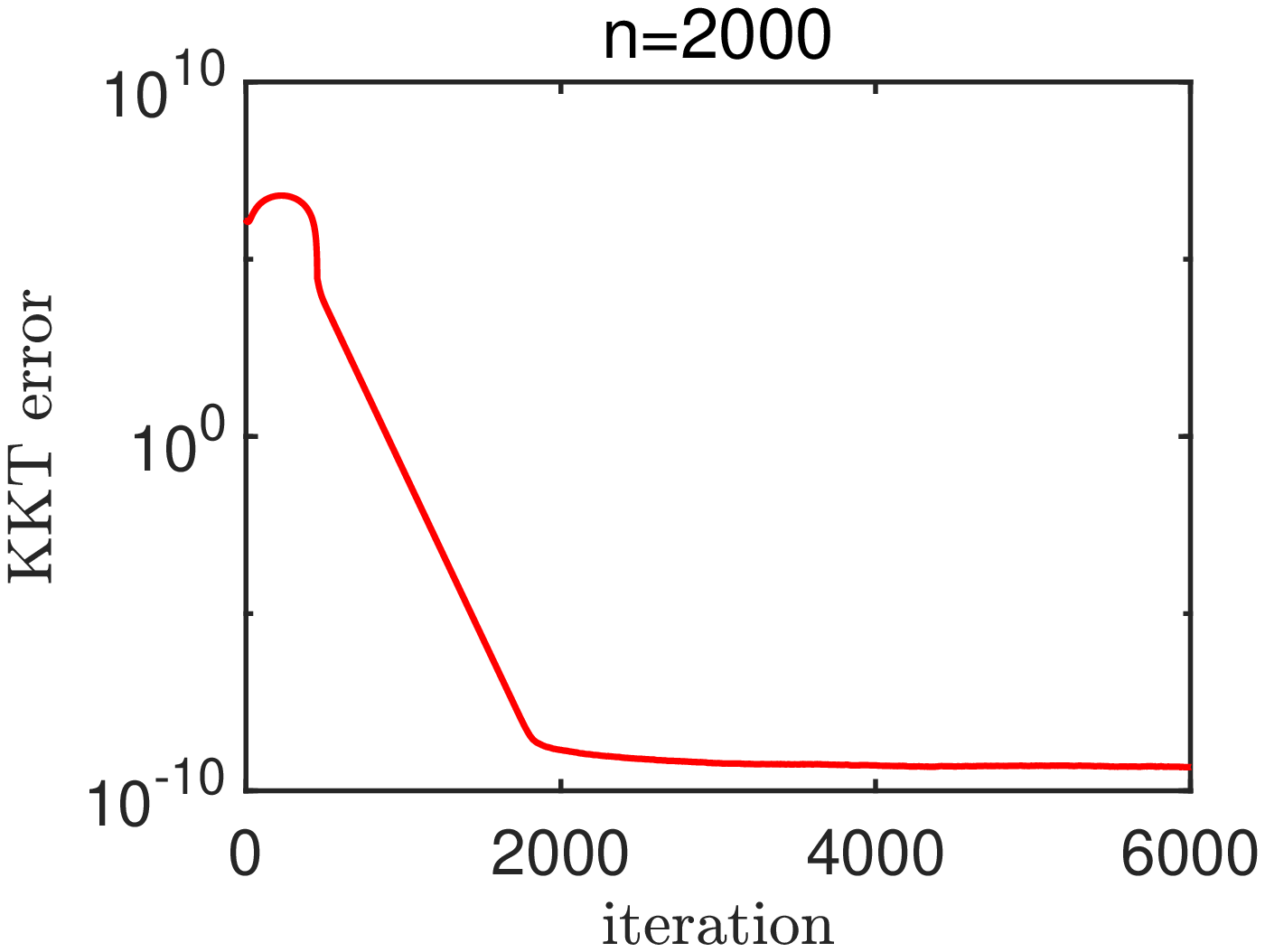}}
\vskip -0.1in
\caption{Convergence of ALAVI on (N-CVI-1)}\label{fig:num1}
\end{center}
\vspace{-1cm}
\end{figure}

\subsection{Non-monotone and constrained variational inequality problems experiment results II (N-CVI-2)}

In this subsection, we shall experiment with another type of general highly nonlinear and non-monotone VI problems where the constraint set is a general polyhedron. Specificcally, we define the mapping $G(u)=D(u)^{\circ2}\circ(u-u^{\sharp})$ where $D:\RR^n\rightarrow\RR^{n}$, $u=(u_1,...,u_n)^{\top}\in\RR^n$, $u^{\sharp}$ is a given point, $\circ2$ represents the Hadamard power and $\circ$ denotes the Hadamard product. In this construction, $G$ is highly nonlinear and non-monotone. Specifically, in this subsection, we set $D(u)=(u_n,...,u_1)^{\top}$ and $J(u)=\|u-\mathbf{1}_n\|_1$. Our objective is to find a solution to the following non-monotone constrained variational inequality:
\begin{equation}\label{eq:N_CVI_2}
\mbox{\rm(N-CVI-2)}\quad\langle G(u^*),u-u^*\rangle+J(u)-J(u^*)\geq0,\quad\forall u\in\Gamma,
\end{equation}
where $\Gamma=\{u\in[-10,10]^n \mid Au-b\leq0\}$ with $A\in\RR^{m\times n}$ and $b\in\RR^m$.

First, we note $G$ is non-monotone: for $n=2$ consider $u=\left(\begin{array}{c}1\\ \epsilon\end{array}\right)$, and $u'=\left(\begin{array}{c}\epsilon\\1\end{array}\right)$ with $\epsilon>1$. Then,
\begin{eqnarray*}
\langle G(u)-G(u'),u-u'\rangle&=&\left\langle\left(\begin{array}{c}\frac{1}{2}\epsilon^2-\epsilon+\frac{1}{2}\\ \epsilon-\frac{1}{2}-\frac{1}{2}\epsilon^2\end{array}\right),\left(\begin{array}{c}1-\epsilon\\ \epsilon-1\end{array}\right)\right\rangle\\
&=&\frac{1}{2}(1-\epsilon)^3-\frac{1}{2}(\epsilon-1)^3
= - (\epsilon-1)^2 \overset{\epsilon>1}{<} 0,
\end{eqnarray*}
indicating that $G$ is non-monotone in general. Second, we choose  $u^{\sharp}$ as used in $G$, and the vector $p^{\sharp}\in\RR^m$, to be the optimal primal and dual (the associated Lagrangian multiplier) solutions of the convex optimization problem: 
$\min_{u\in\Gamma}J(u)$,
which ensures that 
\[
J(u)-J(u^{\sharp})+\langle p^{\sharp},A(u-u^{\sharp})\rangle\geq0,
\]
and
\[
\langle p-p^{\sharp},Au^{\sharp}-b\rangle\leq0.
\]
It follows that
\[
\langle G(u),u-u^{\sharp}\rangle+J(u)-J(u^{\sharp})+\langle p^{\sharp}, A(u-u^{\sharp})\rangle\geq\langle D(u)(u-u^{\sharp}),u-u^{\sharp}\rangle\geq0.  
\]
The above ensures that the primal-dual variational coherence condition holds, and ALAVI is guaranteed to converge for solving (N-CVI-2).
In fact, the ALAVI scheme for (N-CVI-2) can be explicitly written as
\[
\left\{\begin{array}{l}
v^k:=(1-\eta)u^k+\eta v^{k-1}; \\ q^k:=\max\left(0,p^k+\gamma(Au^k-b)\right); \\ u^{k+1}:=\arg\min_{u\in[-10,10]^n}\langle G(u^k)+A^{\top}q^k,u\rangle +J(u)+\frac{1}{2\alpha}\|u-v^k\|^2;\\ 
p^{k+1}:=\max\left(0,p^k+\gamma(Au^{k+1}-b)\right).\end{array}\right.
\]
We consider the following settings: $n\in\{100,500,1000,2000\}$, $m=n/50$, the entries of matrix $A$ are independently drawn from the normal distribution $\mathbb{N}(0, 1)$, and $b=A\cdot(\mathbf{1}_n/2)$. The starting point for ALAVI is always $u^0 = \mathbf{1}_n$. As the mapping $G$ is non-monotone, there may exist multiple solutions to (N-CVI-2). Therefore, we measure the performance of ALAVI by its the error with regard to the KKT system, defined as ${\rm dist}\left(0,G(u^k)+\partial J(u^k)+A^{\top}p^k+\mathcal{N}_{[-10,10]^n}(u^k)\right)+\|\max\{0,Au^{k}-b\}\|$. Figure~\ref{fig:num2} displays the KKT error plotted against the iteration counts, which shows  
that ALAVI effectively and easily solves (N-CVI-2).
\begin{figure}[ht]
\begin{center}
{\includegraphics[width=0.24\textwidth]{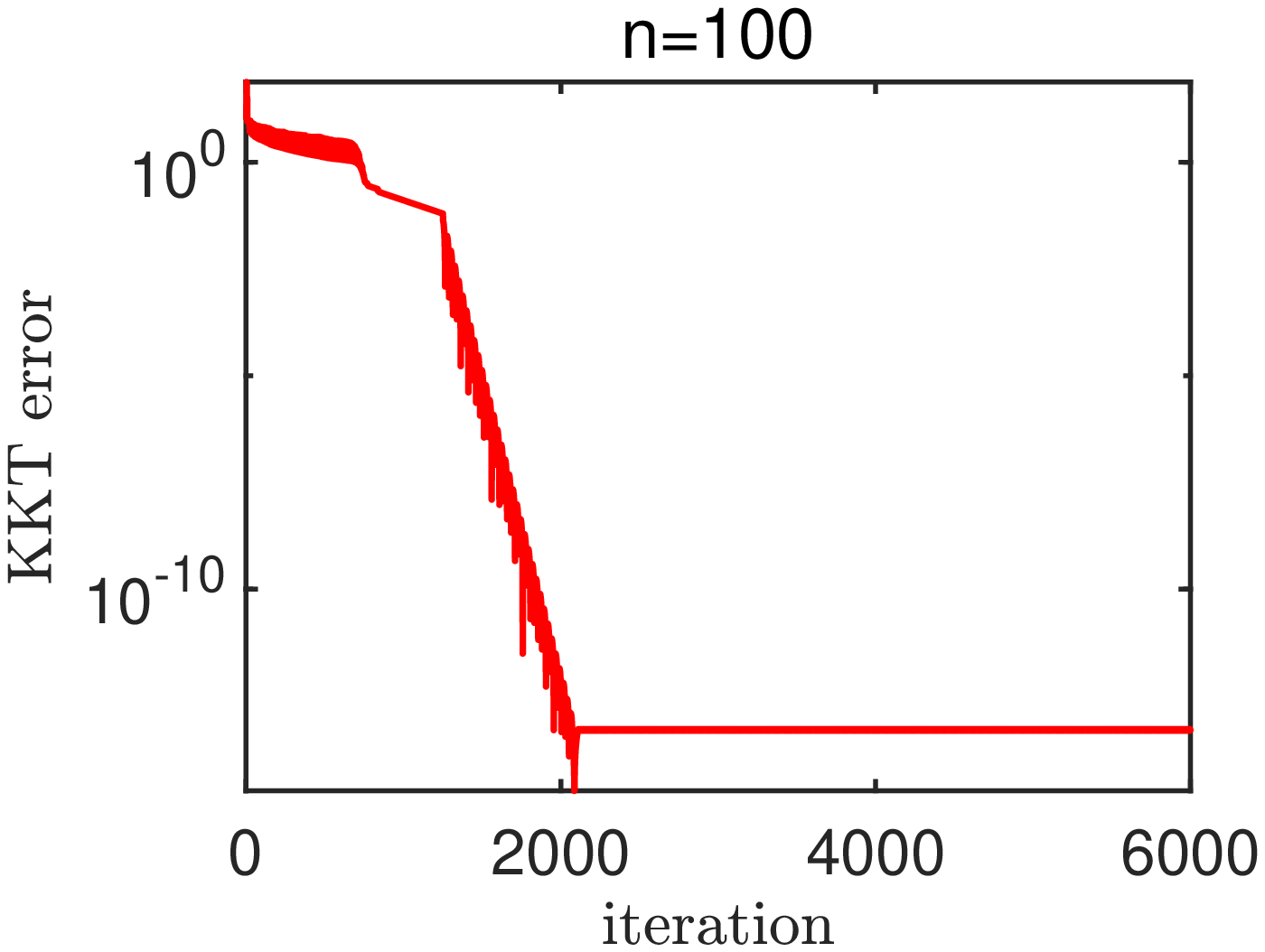}}
{\includegraphics[width=0.24\textwidth]{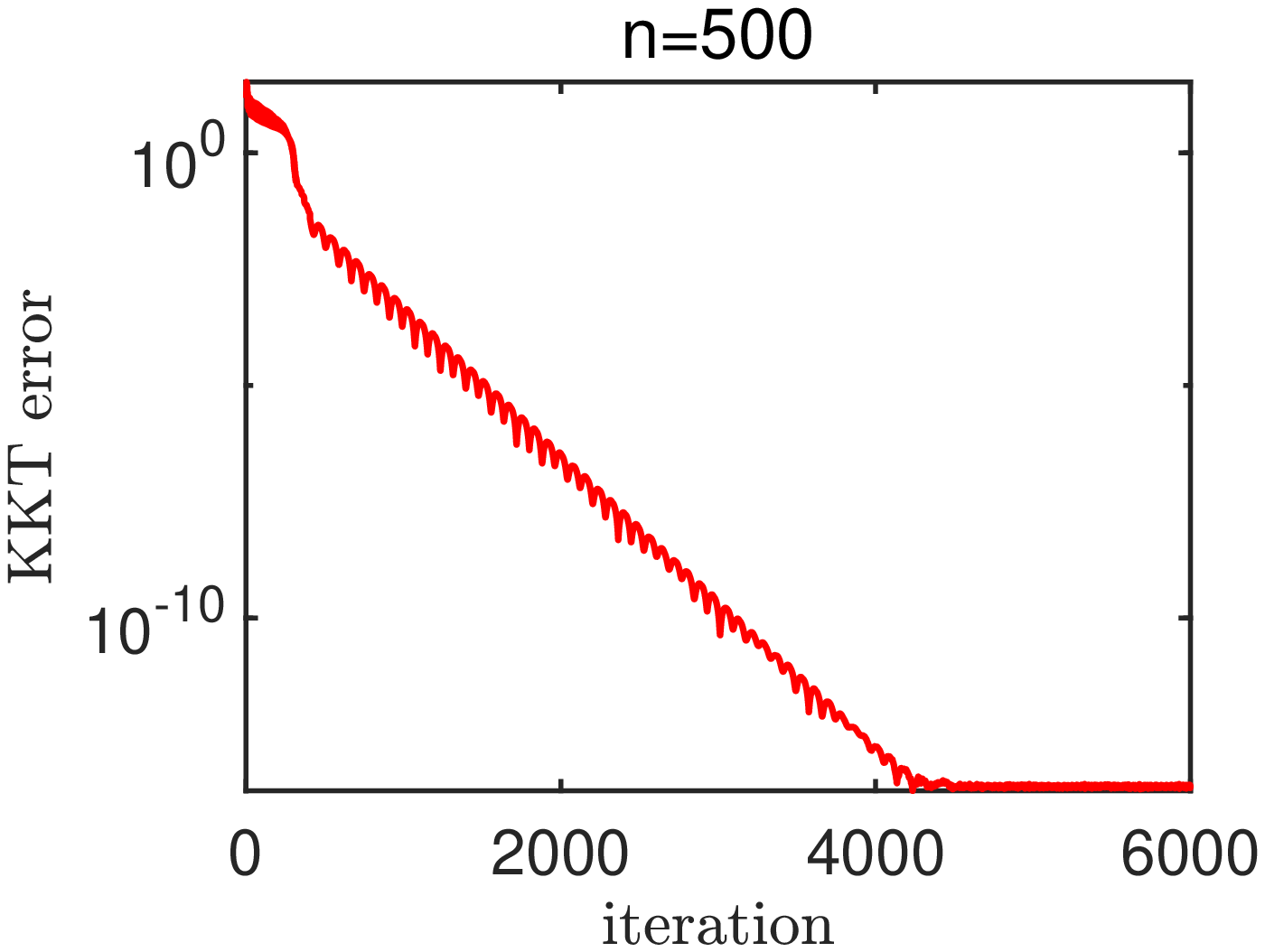}}
{\includegraphics[width=0.24\textwidth]{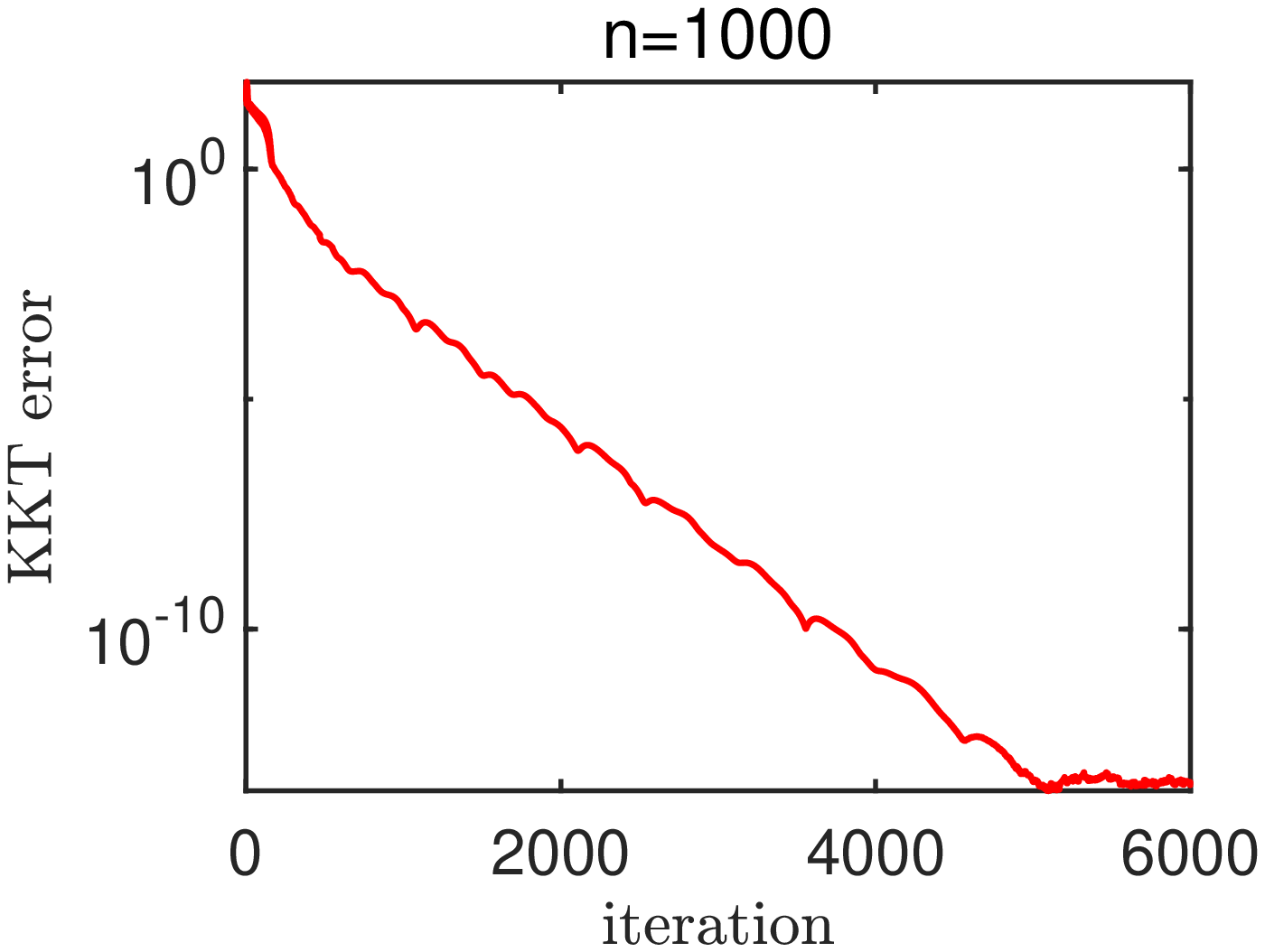}}
{\includegraphics[width=0.24\textwidth]{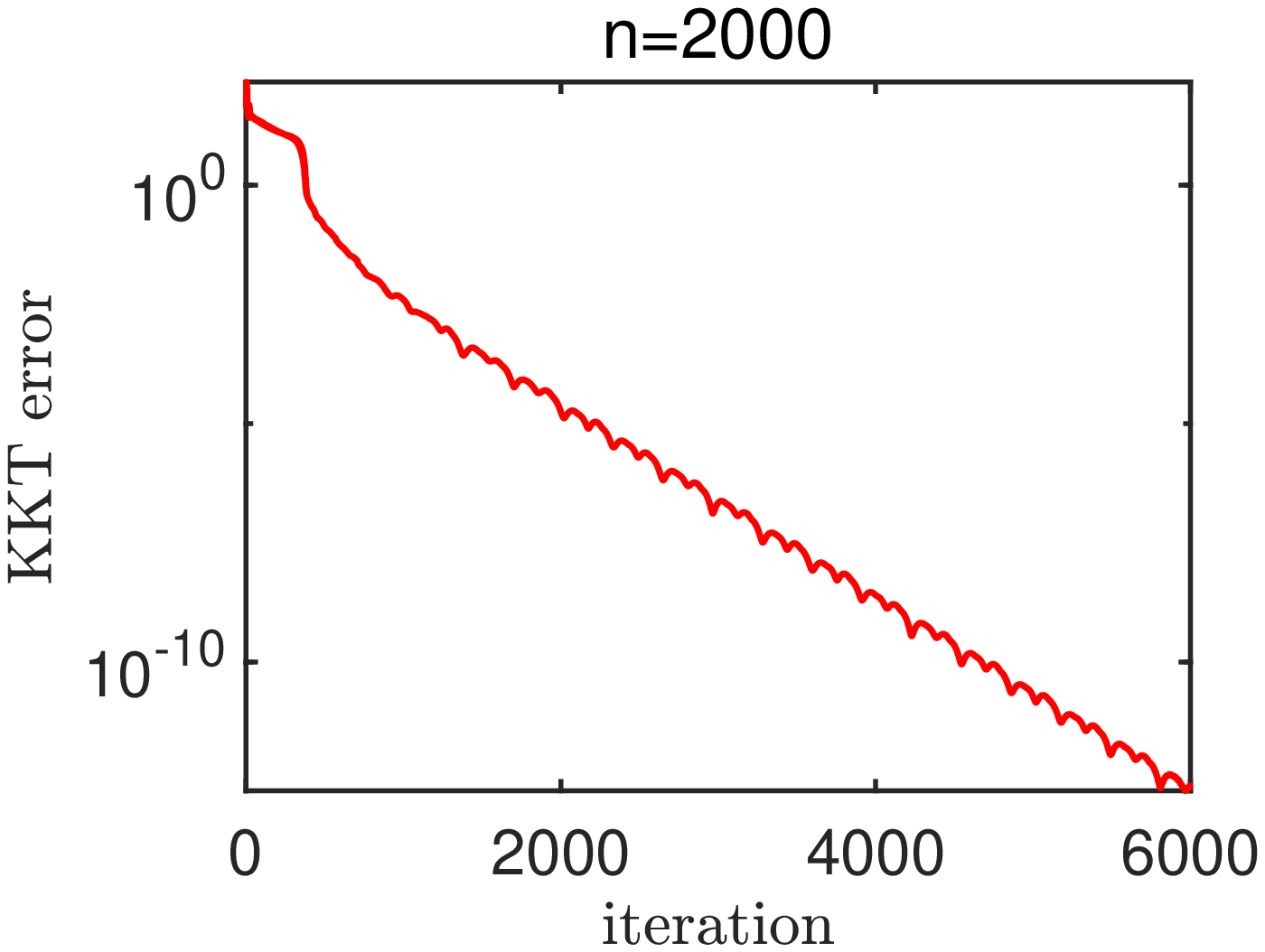}}
\vskip -0.1in
\caption{Convergence of ALAVI on (N-CVI-2)}\label{fig:num2}
\end{center}
\vspace{-1cm}
\end{figure}

{\bf Acknowledgments.}
The research of the first two co-authors was supported by NSFC grant 71871140 and 72293582.

\appendix
\section{Examples to Illustrate the Subclasses Discussed in Section~\ref{sec:monotone}} \label{appendix}

\begin{example}[($J+\langle p^*, \Theta \rangle )$-pseudo monotone]\label{exap1}
Consider (VIP) with $G(u)=\frac{1}{1+u},\;J(u)=u$, $\UU =\RR^+,$ and constraint $\Theta (u)=u-1\leq 0$.
\begin{itemize}
\item [{\rm(i)}] {\bf Saddle point.} Consider the following two variational inequality systems
\begin{eqnarray}
\mbox{\rm (VIS)} \qquad &&\left\langle \frac{1}{1+u^*}, u-u^*\right\rangle +(u-u^*)+\langle p^*,
u-u^* \rangle \geq 0,  \quad \forall u\in\UU \nonumber \\
&&\langle  u^*-1, p-p^*\rangle \leq 0, \quad \forall p \geq 0 \nonumber
\end{eqnarray}
and
\begin{eqnarray}
\mbox{\rm (MVIS)} \qquad &&\left\langle \frac{1}{1+u}, u-u^\natural \right\rangle +(u-u^\natural )+\langle p^\natural ,
u-u^\natural \rangle \geq 0,  \quad \forall u\in \UU  \nonumber \\
&&\langle  u^\natural -1, p-p^\natural \rangle \leq 0, \quad \forall p \geq 0 . \nonumber
\end{eqnarray}
Obviously, $(0,0)$ is a saddle point of (VIS) and (MVIS).
\item [{\rm(ii)}] {\bf Star monotone and monotone.} Since $\langle G(u)-G(u^*), u-u^*\rangle =\langle \frac{1}{1+u}-1, u-0\rangle \leq 0, \;\forall u\geq 0$, we know that
$G(u)$ is not star monotone at $u^*$ on $\UU$ and is non-monotone on $\UU$.
\item [{\rm(iii)}] {\bf $(J+\langle p^*, \Theta \rangle)$-pseudo monotone and $J$-pseudo monotone.} For $u,v\in \UU$ if $\langle G(v), u-v\rangle +J(u)-J(v)+\langle p^*, u-v\rangle\geq 0$, i.e.,
\begin{eqnarray}
\qquad &&\left\langle \frac{1}{1+v}, u-v\right\rangle +(u-v)=(u-v)\left[\frac{1}{1+v}+1 \right]\geq 0 \implies u\geq v . \nonumber
\end{eqnarray}
Thus,
\begin{eqnarray}
\qquad &&\left\langle \frac{1}{1+u}, u-v\right\rangle +(u-v)=(u-v) \left[\frac{1}{1+u}+1 \right]\geq 0 , \mbox{i.e., } \nonumber\\
       &&\langle G(u),u-v\rangle +J(u)-J(v)+\langle p^*, u-v\rangle \geq 0,  \mbox{with } p^*=0 . \nonumber
\end{eqnarray}
Hence, $G$ is $ (J+\langle p^*, \Theta \rangle )$-pseudo monotone on $\UU$. Obviously, $G$ is also $J$-pseudo monotone.
\item [{\rm(iv)}] {\bf Pseudo monotone.} For $u,v\in \UU$,  $\langle G(v), u-v\rangle =\langle \frac{1}{1+v}, u-v\rangle \geq 0 \implies u\ge v$. \\
Hence,
$\langle G(u), u-v\rangle =\langle \frac{1}{1+u}, u-v\rangle \geq 0 $. Therefore, $G$ is pseudo monotone.
\end{itemize}
\end{example}
\begin{example}[Both star monotone and $(J+\langle p^*, \Theta \rangle )$-pseudo monotone]\label{exap2}
Consider (VIP) with $G(u)=\sin u -1,\;J=u$, $\UU=[0, \pi ]£¬$ and constraint $\Theta (u)=u-\frac{3 \pi}{4} \leq 0$
\begin{itemize}
\item [{\rm(i)}] {\bf Saddle point.} Consider the following two variational inequality systems
\begin{eqnarray}
\mbox{\rm (VIS)} \qquad &&\langle (\sin u^*-1 ), u-u^*\rangle +(u-u^*)+\langle p^*,
u-u^*\rangle \geq 0,  \quad \forall u\in\UU  \nonumber\\
&& \left\langle  u^*-\frac{3\pi }{4}, p-p^* \right\rangle \leq 0, \quad \forall p \geq 0 \nonumber
\end{eqnarray}
and
\begin{eqnarray}
\mbox{\rm (MVIS)} \qquad &&\langle (\sin u-1 ), u-u^\natural \rangle +(u-u^\natural )+\langle p^\natural ,
u-u^\natural \rangle \geq 0,  \quad \forall u\in\UU \nonumber \\
&& \left\langle  u^\natural -\frac{3\pi }{4}, p-p^\natural \right\rangle \leq 0, \quad \forall p \geq 0. \nonumber
\end{eqnarray}
Obviously, $(0,0)$ is a saddle point of (VIS) and (MVIS).
\item [{\rm(ii)}] {\bf Star monotone and monotone.} Since $\langle G(u)-G(u^*), u-u^*\rangle =\langle \sin u, u\rangle \geq 0, \;\forall u\in [0,\pi ]$, then
$G(u)$ is star monotone at $u^*$ on $\UU$. Taking $u=\frac{\pi }{2},\;v=\frac{3\pi}{4}$, we have that $\langle G(u)-G(v), u-v\rangle <0$. Therefore, $G$  is non-monotone on $\UU$.
\item [{\rm(iii)}] {\bf $(J+\langle p^*, \Theta \rangle )$-pseudo monotone and $J$-pseudo monotone.} For $u,v\in \UU$ if $\langle G(v), u-v\rangle +J(u)-J(v)\geq 0$, i.e., $\sin v \,(u-v)\geq 0$, then $u\geq v$.

Therefore, $\langle G(u), u-v\rangle +J(u)-J(v)=\sin u \,(u-v)\geq 0$ and $G$ is
$ (J+\langle p^*, \Theta \rangle )$-pseudo monotone on $\UU$ with $J(u)=u,\;p^*=0$. Obviously $G$ is also $J$-pseudo monotone.
\item [{\rm(iv)}] {\bf Pseudo monotone.} Taking $u=\frac{\pi}{2},\; v\in (\frac{\pi}{2}, \frac{3\pi}{4})$, we have that
$\langle G(u), v-u\rangle =0 $ and $\langle G(v), v-u\rangle < 0 $. Therefore, $G$ is not pseudo monotone on $\UU$.
\end{itemize}
\end{example}
\begin{example}[Both quasi monotone and $(J+\langle p^*, \Theta \rangle )$-quasi monotone]\label{exap3}
Consider (VIP) with $G(u)=u^2$, $J(u)=0$, $\UU=[-1,1]$ and constraint $\Theta (u)=u\leq 0$.
\begin{itemize}
\item [{\rm(i)}] {\bf Saddle point.} Consider the following two variational inequality systems
\begin{eqnarray}
\mbox{\rm (VIS)} \qquad &&\langle (u^*)^2, u-u^*\rangle +\langle p^*,
u-u^*\rangle \geq 0,  \quad \forall u\in\UU \nonumber \\
&&\langle u^*, p-p^*\rangle \leq 0, \quad \forall p \geq 0 \nonumber
\end{eqnarray}
and
\begin{eqnarray}
\mbox{\rm (MVIS)} \qquad &&\langle u^2, u-u^\natural \rangle +\langle p^\natural ,
u-u^\natural \rangle \geq 0,  \quad \forall u\in\UU  \nonumber \\
&&\langle  u^\natural , p-p^\natural \rangle \leq 0, \quad \forall p \geq 0 \nonumber
\end{eqnarray}
and $(-1, 0)$ is one saddle point of (VIS) and (MVIS).
\item [{\rm(ii)}] {\bf Star monotone and monotone.} For $u^*=-1$, since $\langle G(u)-G(u^*), u-u^*\rangle =-1 < 0, $
 whenever $u=0$. Thus, $G$ is not star monotone at $u^*$ on $\UU$.
 \item [{\rm(iii)}] {\bf $(J+\langle p^*, \Theta \rangle)$-pseudo monotone and $J$-pseudo-monotone.} For $p^*=0$, $u=1$ and $v=0$, we have that $\langle G(v), v-u\rangle =v^2(v-u)=0$ and $\langle G(u), v-u\rangle =u^2(v-u)<0$. Therefore, $G$ is not $J$-pseudo monotone and isn't $ (J+\langle p^*, \Theta \rangle )$-pseudo monotone on $\UU$ with $J(u)=0$ and $p^*=0$.
\item [{\rm(iv)}] {\bf Pseudo monotone.} By (iii) of this example, $G$ is not pseudo monotone on $\UU$ either.
\item [{\rm(v)}] {\bf Quasi monotone and $(J+\langle p^*, \Theta \rangle )$-quasi monotone.} For $u,\;v\in \UU$, if $\langle G(v),v-u\rangle >0$, then $v\neq 0$ and $v>u$. Therefore, we have $\langle G(u),v-u\rangle \geq 0$, which shows $G$ is quasi monotone on $\UU$. In this case, $J=0$ and $p^*=0$. Thus, $G$ is also $(J+\langle p^*, \Theta \rangle )$-quasi monotone.
\end{itemize}
\end{example}
\begin{example}[star monotonicity]\label{exap4}
Consider (VIP) with $u=\left(\begin{array}{c}x\\y\end{array}\right)$, $G(u)=G(x,y)=\left(\begin{array}{c}2x\left( y^2+1\right)\\2y\left(x^2+1\right)\end{array}\right)$, $J(u)=J(x,y)=x+y$, $\Theta (u)=x-y=0$ and $\UU =\RR^2_+$.
\begin{itemize}
\item [{\rm(i)}] {\bf Saddle point.} Consider the following two variational inequality systems
\begin{eqnarray*}
\mbox{\rm (VIS)} && \left\langle \left(\begin{array}{c}2x^*\left({y^*}^2+1\right)\\2y^*\left({x^*}^2+1\right)\end{array}\right), \left(\begin{array}{c}x-x^*\\y-y^*\end{array}\right)\right\rangle +(x+y)-(x^*+y^*)\\
&&\qquad\qquad\qquad\qquad\qquad\quad+\langle p^*,
(x-y)-(x^*-y^*)\rangle \geq 0,  \; \forall u\in\UU\\
&&\langle  x^*-y^*, p-p^*\rangle \leq 0, \quad \forall p \in\RR
\end{eqnarray*}
and
\begin{eqnarray*}
\mbox{\rm (MVIS)} \qquad && \left\langle \left(\begin{array}{c}2x\left({y}^2+1\right)\\2y\left({x}^2+1\right)\end{array}\right), \left(\begin{array}{c}x-x^\natural \\y-y^\natural \end{array}\right) \right\rangle +(x+y)-(x^\natural +y^\natural )\\
&&\qquad\qquad\qquad\quad+\langle p^\natural ,
(x-y)-(x^{\natural}-y^{\natural})\rangle \geq 0,  \quad \forall u\in\UU \\
&&\langle  x^\natural -y^\natural , p-p^\natural \rangle \leq 0, \quad \forall p \in\RR
\end{eqnarray*}
where $u^*=\left(\begin{array}{c}0\\0\end{array}\right)$, $p^*=1$ is a saddle point of (VIS) and (MVIS).
\item [{\rm(ii)}] {\bf Star monotone and monotone.} Let $u^*=\left(\begin{array}{c}0\\0\end{array}\right)$. For any $u\in\RR^2_+$, we have
 $\langle G(u)-G(u^*), u-u^*\rangle =\left\langle \left(\begin{array}{c}2x\left({y}^2+1\right)\\2y\left({x}^2+1\right)\end{array}\right),
 \left(\begin{array}{c}x\\y\end{array}\right)\right\rangle =2x^2(y^2+1)+2y^2(x^2+1)\geq 0$.
  Thus, $G$ is star monotone at $u^*$ on $\UU$.\\
  Consider $u=\left(\begin{array}{c}x\\y\end{array}\right)=\left(\begin{array}{c}1\\6\end{array}\right)$ and $u'=\left(\begin{array}{c}x'\\y'\end{array}\right)=\left(\begin{array}{c}3\\3\end{array}\right)$. We have $G(u)=G(x,y)=\left(\begin{array}{c}74\\24\end{array}\right)$, $G(u')=G(x',y')=\left(\begin{array}{c}60\\60\end{array}\right)$,
  $u-u'=\left(\begin{array}{c}-2\\3\end{array}\right)$ and $\langle G(u)-G(u'), u-u'\rangle =\left\langle \left(\begin{array}{c}14\\-36\end{array}\right), \left(\begin{array}{c}-2\\3\end{array} \right )\right\rangle <0$, which shows that $G$ is non-monotone.
\item [{\rm(iii)}] {\bf $(J+\langle p^*, \Theta \rangle )$-pseudo monotone and $J$-pseudo monotone.} Take the same two points $u$ and $u'$ as in (ii). We have $J(u)-J(u')=1$ and $\Theta (u)-\Theta (u')=-5$. For $p^*=1$, we have $$\langle G(u'), u-u'\rangle +J(u)-J(u')+\langle p^*, \Theta (u)-\Theta (u')\rangle =56\geq 0.$$
  However,
  \[
  \langle G(u), u-u'\rangle +J(u)-J(u')+\langle p^*, \Theta (u)-\Theta (u')\rangle =-80 <0.
  \]
  Therefore, $G$ is not $ (J+\langle p^*, \Theta \rangle )$-pseudo monotone on $\UU$. We also find that $G$ is not $J$-pseudo monotone on $\UU$.
\item [{\rm(iv)}] {\bf Pseudo monotone.}  Again, take the same two points as in (ii): $u=\left(\begin{array}{c}x\\y\end{array}\right)=\left(\begin{array}{c}1\\6\end{array}\right)$ and $u'=\left(\begin{array}{c}x'\\y'\end{array}\right)=\left(\begin{array}{c}3\\3\end{array}\right)$. We have
  $\langle G(u'), u-u'\rangle =60\geq 0$. However, $\langle G(u), u-u'\rangle =-76 <0$. Therefore, $G$ is not peudo monotone on $\UU =\RR_+^2$.
\end{itemize}
\end{example}

\begin{remark}{\ }

\begin{itemize}
\item [{\rm(i)}] From Examples~\ref{exap1} and~\ref{exap2}, we see that a star monotone mapping is not necessarily pseudo monotone. Conversely, a pseudo monotone mapping is not necessarily star monotone. On the other hand, a $(J+\langle p^*, \Theta \rangle )$-pseudo monotone mapping is not necessarily pseudo monotone.

\item [{\rm(ii)}] For the case $p^*=0$, the $(J+\langle p^*, \Theta \rangle )$-pseudo monotonicity coincides with the $J$-pseudo monotonicity, and
the $(J+\langle p^*, \Theta \rangle )$-quasi monotonicity coincides with the $J$-quasi monotonicity.

\item [{\rm(iii)}] When $J=0$, the $J$-pseudo monotonicity of $G$ coincides with the pseudo monotonicity of $G$, and
the $J$-quasi monotonicity of $G$ coincides with the quasi monotonicity of $G$.
 \end{itemize}
\end{remark}

\end{document}